\documentclass[11pt]{article}
\usepackage{amsmath, amsthm, amssymb, setspace, mathrsfs, mathtools, mathpazo, enumitem, graphicx, xypic}
\makeatletter
\renewcommand\section{\@startsection{section}{1}{\z@}%
 						{-3.5ex \@plus -1ex \@minus -.2ex}
						{2ex \@plus.2ex}
						{\large\bfseries}}
\renewcommand\subsection{\@ifstar
						{\setcounter{subsection}{\value{equation}}
					\@startsection{subsection}{2}{\z@}
                          {1.75ex \@plus.5ex \@minus.2ex}%
                           {-.4em}		
					\textit*}
					{\setcounter{subsection}{\value{equation}}
						\stepcounter{equation}
					\@startsection{subsection}{2}{\z@}
                          {1.75ex \@plus.5ex \@minus.2ex}%
                           {-.4em}		
					\textit}}
\def\@seccntformat#1{\@ifundefined{#1@cntformat}%
	{\csname the#1\endcsname\quad} 
	{\csname #1@cntformat\endcsname}} 
\def\section@cntformat{\thesection.~} 
\def\subsection@cntformat{(\thesubsection)\ }
\renewcommand*\l@section{\mdseries\small\@dottedtocline{1}{1.5em}{2em}}
\makeatother

\textwidth6.5in
\textheight9in 
\topmargin-2cm
\evensidemargin0cm
\oddsidemargin0cm

\setcounter{tocdepth}{1}

\DeclareMathOperator{\coker}{coker}

\numberwithin{equation}{section}
\theoremstyle{plain}
\newtheorem{maintheorem}{Theorem}
\swapnumbers
\newtheorem{theorem}[equation]{Theorem}
\newtheorem{corollary}[equation]{Corollary}

\newtheorem{proposition}[equation]{Proposition}

\theoremstyle{definition}
\newtheorem{definition}[equation]{Definition}
\newtheorem{remark}[equation]{Remark}

\newcommand{\script}{\mathscr}
\newcommand{\mrE}{\mathrm{E}}
\newcommand{\cliff}{\mathrm{Cliff}}

\newcommand{\cG}{\script{G}}

\newcommand{\cO}{\script{O}}

\newcommand{\cT}{\script{T}}

\newcommand{\bC}{\mathbb{C}}

\newcommand{\bI}{\mathbb{I}}

\newcommand{\bR}{\mathbb{R}}

\newcommand{\bS}{\mathbb{S}}
\newcommand{\bT}{\mathbb{T}}
\newcommand{\bZ}{\mathbb{Z}}
\newcommand{\vep}{\varepsilon}

\newcommand{\bmu}{\boldsymbol{\mu}}
\newcommand{\Spin}{\mathrm{Spin}}
\newcommand{\SO}{\mathrm{SO}}
\newcommand{\Serre}{S}
\newcommand{\vect}{\mathbb{V}}
\newcommand{\braid}{\mathbb{B}}
\newcommand{\fus}{\mathbb{F}}
\newcommand{\ferm}{\psi}
\newcommand{\cats}{\mathbb{L}}
\newcommand{\spit}{\nu}
										
\begin{document}
\title{\textbf{Fully local Reshetikhin--Turaev theories}}
\author{Daniel S.~Freed, Claudia I.~Scheimbauer, Constantin Teleman}
\date{\today}
\maketitle
\begin{quote}
\abstract{
\noindent 
Completing an arc of research initiated by Reshetikhin--Turaev and Witten, we construct \emph
{fully local} 3-dimensional topological field theories from non-degenerate braided fusion 
categories: we define a symmetric tensor enhancement~$\mrE\fus$  with full 
duals of the $3$-category~$\fus$ of fusion categories in which every Reshetikhin--Turaev theory 
has a point generator. This  
$\mrE\fus$ is a direct sum of invertible $\fus$-modules,  indexed by a $\bmu_6$-extension of the 
Witt group of non-degenerate braided fusion categories. Similarly, we enhance the $3$-category 
$S\fus$ of fusion super-categories to a symmetric tensor $3$-category $\mrE S\fus$ with full duals, 
which is a sum of invertible $S\fus$-modules indexed by a $\bmu_{24}$-extension of the super-Witt 
group. The unit spectrum of $\mrE S\fus$ is the 
connective cover of the Pontrjagin dual of $\bS^{-3}$. We discuss  tangential structures and central 
charges of the resulting TQFTs. We establish the $\mathrm{Spin}$-invariance of fusion 
supercategories, and relate $\SO$-invariance  to modular and spherical structures, 
confirming some conjectures from \cite{dsps}.  
} 
\end{quote}


\section*{}
\begin{center}
\begin{minipage}[t]{12cm}
\tableofcontents
\end{minipage}
\end{center}

\section*{Introduction}
\addcontentsline{toc}{section}{\protect\numberline{}Introduction}
\subsection{Background.} 
The $3$-dimensional topological quantum field theories (TQFTs) constructed by Reshetikhin--Turaev 
\cite{rt} and Witten \cite{w} are (symmetric monoidal) functors from a bordism $2$-category of compact 
manifolds of dimensions $1,2,3$ to the target symmetric tensor $2$-category~$\cats$ with objects 
finite, semi-simple $\bC$-linear categories (enriched over the symmetric monoidal category $\vect$ 
of complex vector spaces), and with functors and natural transformations as $1$-and 
$2$-morphisms. On oriented manifolds, the theories are only \emph{projectively} defined. 
The projective dependence is controlled by a \emph{central charge~$c\in \bC$},  normalized to align 
with the Virasoro central charge of (rational) conformal field theories, whose chiral sectors live 
on the boundary of these TQFTs. Thus, the 
chiral free fermion conformal theory has $c=1/2$, and lives on the boundary of the \emph{topological 
free fermion} theory~$\ferm$, an invertible, albeit $\Spin$-dependent TQFT. Restricted to framed 
manifolds, $\ferm$ defines the standard character $\pi_3^s \to \bmu_{24}$ of the framed bordism group. 

The scalar anomaly in the TQFT gluing laws can be resolved by choosing a suitable \emph 
{$3$-dimensional tangential structure $\tau$}, and supplying all manifolds with a $\tau$-structure 
on a $3$-dimensional germ. Certain shifts in the structure scale the output by $c$-dependent, top-level 
automorphisms; this also renders $c$ ambiguous by a ($\tau$-dependent) additive shift. 
This scaling is easy to explain: a change of structure that can be implemented locally, near a single 
point of the manifold, affects the theory by point operators, which in these theories are scalar. 
By contrast,  changes in $\Spin$ structure are not localized at single points, and  $\Spin$-dependent 
versions of Reshetikhin--Turaev (RT) theories show a more complicated behavior~\cite{bel, j}.

Several tangential structures may be used. The default is a \emph{$3$-framing}; a shift in framing 
by $1\in\pi_3\,\SO_3 = \bZ$, which can be effected near a single point, transforms the invariant 
of a closed, framed~$3$-manifold by a factor\footnote{Some early literature normalized~$c$ incorrectly, 
by a factor of $2$.} of~$\exp\left(2\pi i c/6\right)$. Less restrictive is an \emph
{$\SO^{p_1}$-structure} (oriented $p_1$-structure, also called a $(w_1,p_1)$-structure), a trivialization 
of the Pontrjagin cocycle~$p_1$ on oriented manifolds; this was first considered in \cite{bhmv}. A unit 
shift in $p_1$-trivialization changes the top-level invariants by $\exp\left(2\pi i c/24\right)$, 
reflecting the value~$p_1=4$ on the basic spherical class in~$B\SO_3$. Mediating 
between these are $\Spin^{r p_1}$-structures  ($r=1, 1/2, 1/4$), incorporating trivializations 
of $r p_1$ on $\Spin$ manifolds; we  review those in \S\ref{spinomaly},
with more details in~Appendix~\ref{tangents}. One novelty introduced in \S\ref{Qp1} 
is a \emph{$\bC p_1$-structure}, which lifts the central charge to~$\bC$. The role of 
$p_1$-structures in Chern-Simons theories,  following Witten's construction \cite{w}, is 
reviewed in \cite{ft2}.

\subsection{RT theories from categories.} 
An anomalous RT theory $\cT$ is determined by the data of
\begin{enumerate}\itemsep0ex
\item a \emph{finite, semi-simple} category $T:=\cT(S^1)$, associated to the standard circle, 
\item a \emph{rigid, braided tensor structure} on $T$, defined from the pair-of-pants multiplication, 
\item  a \emph{ribbon} automorphism of the identity of $T$, defined from the circle rotation 
action,
\end{enumerate}
which are subject to two conditions: 
\begin{enumerate}\itemsep0ex
\item[(a)]  \emph{non-degeneracy} of the braiding: the central objects form the additive summand $\vect\cdot\mathbf{1}\subset T$, 
\item[(b)] \emph{symmetry} of the ribbon, giving a homogeneous quadratic enhancement of the braiding. 
\end{enumerate}
A braided fusion category (BFC), defined by (i) and (ii), is called \emph{non-degenerate} if it satisfies 
condition (a). The complete package (i--iii) with (a,b) describes a \emph{modular tensor category} (MTC). 
Viewing~$T$ as an object in a $4$-category\footnote{There are several variant $4$-categories; 
for this statement, $E_2$ objects in linear categories will do, see e.g.~\cite{bjs}.}  of BFCs, the 
ribbon trivializes the square of the braiding,\footnote {Equivalently: in the pair of 
pants, the square of the braiding is the product of the three boundary Dehn twists.} which is the 
structural \emph{Serre automorphism}: see~\S\ref{orient}. Symmetry ensures that the trivialization 
has order $2$. These data and constraints very nearly determine a linear TQFT~$\cT$ for oriented 
manifolds with \emph{signature structure} in dimensions $1/2/3$: see for instance Turaev's book~\cite{tur}, 
or the detailed account in~\cite{bdsv}. Left over is a sign ambiguity 
(see \cite{bk, bdsv} and \S\ref {sabotage} below). The central charges of the two resulting theories 
couple to $1/8^{th}$ of the signature. 

As we shall review, $\Spin$ versions of RT theories can be defined from braided \emph{super-categories} --- 
linear over super-vector spaces --- meeting a non-degeneracy condition akin to (a): the central objects 
form a copy of $S\vect \subset T$. There will be $24$ choices of framed theories for a given~$T$, 
differing by powers of the topological free Fermion theory~$\ferm$ (see~\S\ref{alphaomega}). Other 
tangential choices lead to  different sets of options; for instance, $\Spin^{p_1}$-structures 
give $48\times 2$ choices.

\subsection{Formulation of the problem.}
Although `one step more local' than Atiyah-Segal TQFTs \cite{at} in including corners of 
co-dimension $2$, Reshetikhin--Turaev theories are not manifestly \emph{fully local} in the sense of the 
Baez-Dolan-Lurie \emph{Cobordism Hypothesis}~\cite{l}: they are not generated by an object $X = \cT(pt)$ 
in an obvious symmetric tensor $3$-category. For such an $X$, the endomorphism category 
of $\mathrm{Id}_X$ --- the \emph{Drinfeld center $Z(X)$} --- should be braided-tensor isomorphic to~$T$. 
The obstruction to finding such a \emph{fusion category $X$} is the class~$[T]$ in the \emph
{Witt group $W$} of non-degenerate braided fusion categories~\cite {mug}, the quotient of 
invertible~BFCs by Drinfeld centers. If $[T]=0$, then such an~$X$ exists, uniquely up to isomorphism 
in~$\fus$; the resulting fully local theory is then a \emph{Turaev-Viro TQFT}. 
Absent such a fusion category $X$, the theory $\cT$ is more difficult to access.

The question of fully localizing RT theories, especially in relation to Chern-Simons theories, 
was raised from the early days of the Cobordism Hypothesis, e.g.~\cite{fhlt}. Constructions have been 
proposed, based on conformal boundary theories, in terms of vertex algebras or nets of von Neumann 
algebras~\cite{k, andre}. A feature of these approaches is their use of \emph{additional analytic input}, 
and the apparent \emph{absence of additional topological output}. This leads to the suspicion that the 
information required to fully localize these TQFTs is entirely contained in their $1/2/3$ portion. 

\subsection{The universal category.}\label{results}
In this paper, we confirm this suspicion, and enlarge the $3$-category $\fus$ of fusion categories 
to a \emph{symmetric monoidal $3$-category $\mrE\fus$} (``enlarged $\fus$''),  
containing the point generators of Reshetikhin-Turaev theories. It is characterized by the  
universality property~(viii) below, and has the additional properties (i-vii):
\begin{enumerate}\itemsep0ex
\item $\mrE\fus$ \emph{has full duals}: all $k$-morphisms are $(3-k)$-dualizable, $0\le k \le 3$;
\item $\mrE\fus = \bigoplus_{\widetilde{w}\in \widetilde W} \fus_{\widetilde{w}}$, where $\widetilde{W} \to W$ is an extension 
of the Witt group by $\bmu_6$;
\item In particular, there are no non-zero morphisms relating objects in $\fus_{\widetilde{w}}$ and 
$\fus_{\widetilde{w}'}$ when $\widetilde{w}\neq \widetilde{w}'$;
\item Each $\fus_{\widetilde{w}}$ is an invertible module over $\fus_1 = \fus$; 
\item Specifically, choosing a representative braided category~$T(w)$ for $w\in W$ gives 
an isomorphism $\fus_{\widetilde{w}} \cong \fus_{T(w)}$, the $3$-category of fusion categories with central 
action of $T(w)$ (called \emph{fusion categories over $T(w)$});
\item  When $\zeta=\exp(2k\pi i/6)\in \bmu_6\subset\widetilde{W}$, we have $\fus_\zeta \cong \fus$ as a module, generated by an invertible object $U^{\otimes k}$, unique up to isomorphism; 
\item The units $U^{\otimes k}$ generate the six invertible framed TQFTs valued in $\mrE\fus$, and 
factor uniquely through 
the category of oriented manifolds with $p_1$-structure;
\item Every\footnote{Subject to some standard assumptions, see Theorem~1.} symmetric monoidal 
$3$-category~$\bT$ which contains~$\fus$, and for which each framed $1/2/3$ TQFT valued in 
$\cats$ extends uniquely to a fully local $\bT$-valued theory,  receives a unique 
symmetric tensor functor from $\mrE\fus$.
\end{enumerate} 

\subsection{Commentary.}
Tempting as it is to assert that $\mrE\fus$ is universal in promoting Reshetikhin--Turaev 
theories to fully local, framed theories,  this needs qualification: it is not known 
if every non-degenerate BFC admits modular structures. 
A category which is universal in that weaker sense  only receives a functor out of the 
`modular part' of  $\mrE\fus$. 

Property~(iii) seems disappointing, but follows from the main result of~\cite{ft1}: 
no non-zero topological interfaces exist between Witt inequivalent, fully local $3$-dimensional 
theories. This  counters the traditional supposition that the point generator for a $3$D 
TQFT must be the $2$-category of its topological boundary theories:  
none such exist for objects in $\mrE\fus\setminus\fus_1$. Property~(vii) is specific to 
\emph{bosonic} theories, with no super-vector spaces in the target. As a consequence, the 
subgroup $\bZ/4\subset\pi_3^s \cong \bZ/24$ is represented \emph{trivially} in a bosonic, 
invertible~$3$D theory, and~$\pi_3^s$ must factor through the quotient group~$\bZ/6\cong 
\pi_3 (\bS/\langle{\eta}\rangle)$ (Remark~\ref {Smodeta}). This agrees with the bordism group 
of $p_1$-oriented $3$-manifolds (Appendix~\ref{tangents}). 

\subsection{The super-category.}
The same method  gives a super-version of the result, relevant for Spin RT theories, considered 
early on in \cite{bm, bel}, or more recently \cite{j}: we get an enlargement $\mrE S\fus$ of 
the $3$-category $S\fus$ of fusion super-categories by the super-Witt group~$SW$ of non-degenerate 
BFSCs modulo Drinfeld centers from~$S\fus$ \cite{don}. The kernel of the extension $\widetilde{SW}\to SW$ 
is now the Pontrjagin dual of $\pi_3^s = \bZ/24$, and the objects of $\mrE S\fus$ generate the 
framed $3$-dimensional TQFTs.

\subsection{Orientations and spherical structures.} 
We also address the question of relaxing the domain of these (super) 
TQFTs. \emph{A priori}, they are defined on $3$--framed manifolds, and their $p_1$-dependence precludes a clean 
factorization through $\Spin$ or oriented manifolds. However, as we show in 
\S\ref{spinv} and \S\ref{canorient}, the topological boundary theories for Turaev-Viro (super) TQFTs 
constrain this $p_1$-dependence, force the vanishing of (appropriately reduced) central charges,  
and descend them to~$\Spin$ or, at times, oriented TQFTs. This was conjectured in \cite{dsps}.

In particular, we spell out the relation (partially established in \cite{tur}) between $\SO^{p_1}_3$-invariance 
data of objects $X\in \mrE\fus$ and modular structures on their centers. We find that not all modular 
structures are created equal: if they exist, there is a preferred one\footnote{One may speculate 
that the preferred modular structure is unitarizable, but this is not clear from our construction.}  
(Theorem~\ref{canonical}). The others correspond 
to invertible central elements of order~$2$.

When~$X=F$ is a fusion category, the preferred modular structure leads to $\SO_3$-invariance data on~$\cT_F$, 
and allows \emph{compatible $\SO_2$-invariance data on all its boundary theories}. Changing the modular 
structure by~$z\in Z(F)$ of order~$2$ obstructs the $\SO_3$-invariance precisely when~$z$ is 
\emph{fermionic} (has self-braiding $-1$): see the example of $\bZ/2$-gauge theory in \S\ref{funnyZ2}. 
At the other extreme, central lifts of order~$2$ of $\mathbf{1}\in F$ also preserve the invariance of 
the regular module. More precisely, spherical structures on~$F$ correspond to \emph{$\SO_3$-invariance 
data for~$F$ and compatible $\SO_2$-invariance of its regular module}\footnote{Modulo the 
choice of some non-zero scalars: a \emph{Frobenius structure constant} for each summand of $F$.}  
(Theorem~\ref{spherical}). This refines a conjecture of~\cite{dsps}, and meshes well with the 
main theorem of \cite{ft1}, which characterizes (simple) fusion categories as $3$-dimensional 
simple TQFTs equipped with a non-zero 
boundary theory (the regular module).

\subsection{Central charges.} \label{sabotage}
The $4$-fold factor between framing- and $p_1$-shifts is broken for more general tangential structures. 
The problem stems from the co-kernel~$\bZ/2$ of $J:\pi_3\SO_3\to \pi_3^s$ (whereas~$J$ surjects from 
$\pi_3\SO$.) One TQFT manifestation is the invertible theory~$\omega$ with central charge~$6$ (see \S\ref
{alphaomega}), which is trivial on $3$-framed manifolds, but not so on  $\Spin_3^{rp_1}$-structured 
manifolds. Another  is the invertible $3$-framed theory~$\spit$ of 
order two, which extends to a 
$\Spin$-theory with central charge~$0$, as well as an oriented $p_1$-version with central 
charge~$12\bmod{24}$. The $\Spin$ theory~$\spit$ detects the unstable~$\bZ/2$ summands 
in several $3$-dimensional bordism groups (Appendix~\ref {tangents}, Table~\ref{mytable}).

A (logarithm of a) Gauss sum, defined from $T$ and its ribbon, determines a rational number 
$\bmod{\:8\bZ}$ \cite{mug2, rowell}. Our  saboteur, $\coker J$, amends the folk belief that identifies 
this number with~$c(\cT)\bmod{8\bZ}$. As already noted in \cite{bk, bdsv}, there are \emph{two} 
signature-structured $1/2/3$-dimensional TQFTs for oriented manifolds defined by the same modular~$T$: 
they differ by the invertible theory~$\spit$, which has central charge~$4\bmod{8\bZ}$ when promoted 
to oriented manifolds with signature structure.\footnote{Because ~$\spit$ is not \emph{reflection-positive} 
in the classification of \cite{fh}, it is not clear 
if this central charge ambiguity 
can affect \emph{unitary} TQFTs, and we do not discuss unitarity here.}

We prefer the setting of $p_1$-structures, which genuinely localize to points: signature structures 
only do so projectively, \cite{projsym}. Then, a projective $1/2/3$-theory as above has~$6$ 
fully local linearizations on $\SO^{p_1}$-manifolds, related by powers of the invertible 
theory~$\cT_U$. Their central charges, defined $\bmod{24}$, agree with the Gauss sum~$\bmod{\:4}$. From this perspective, signature structures are an (unsuccessful) 
attempt to remove the linearization ambiguity, pinning the even powers of~$\cT_U$. The sign ambiguity remains. 

\subsection{Central extensions of $\mrE S\fus$.} Objects of $\mrE S\fus$ classify $3$-dimensional 
framed TQFTs, and we may ask about alternative tangential structures. For 
$\Spin^{rp_1}_3$-structures ($r\in \frac{1}{4}\bZ$ or $r=\bC$, see \S\ref{Qp1}), this is closely 
related to the question of refining reduced central charges, and we have a ready answer: the summands 
of the classifying $3$-category are now labelled by a central extension of~$SW$ by the Pontrjagin 
dual of $\pi_0MT\Spin_3^{rp_1}$ (Appendix~\ref{tangents}, 
Table~\ref{mytable}). Indeed, thanks to the Cobordism Hypothesis, we seek the $\Spin_3^{rp_1}$-fixed 
point category of~$\mrE S\fus$. As we show in \S\ref{spinv}, the action of $\Spin_3$ depends 
only on the super-Witt label $\widetilde{w}$ of a summand $S\fus_{\widetilde{w}}$, and the action 
of $\Spin_3^{rp_1}$ becomes trivial, since $\pi_3\Spin_3$ has been killed. This leads to an extension of 
$\widetilde{SW}$ by $\bmu_{4r}$ (or by $\bZ$, when $r=\bC$). The center of the total 
extension of~$SW$ is the group of invertible theories, dual to the bordism group. The central charge now 
picks up the stable summand, and refines accordingly.

\subsection{Four dimensions.} 
If we invoke the recent classification of fusion $2$-categories \cite{f2cat}, we can continue our 
discussion in one higher dimension. The new objects $X\in\mrE\fus$ become new \emph
{$1$-morphisms} in the symmetric tensor $4$-category of fusion $2$-categories:  specifically, 
isomorphisms of their centers~$Z(X)$ with the unit category $\vect$ (as a BFC). This will 
kill the (super-)Witt group, and  produce a fully dualizable symmetric monoidal $4$-category, wherein  
all TQFTs become finite gauge theories with 
generalized Dijkgraaf-Witten twists. Conjecturally~\cite{kw}, these are \emph{all} of the fully local, 
$4$-dimensional TQFTs valued in $4$-categories (as opposed to $(\infty,4)$-categories), so the 
output has the flavor of a universal target for $4$-dimensional TQFTs. However, the new objects 
in~$\mrE \fus$ give rise to a larger $4$-category: see for instance the novel $\bZ/3$-gauge theory in 
Appendix~\ref{exgauge}, which incorporates a Dijkgraaf-Witten twist from $\pi_3^s$ that is not accessible 
in~\cite{f2cat}. The genuine universal target $4$-category thus still needs 
spelling out. A model characterization, and for its generalization to higher dimensions, 
has  been announced by Johnson-Freyd and Reutter. We hope to return to $4$~dimensions in the future.

\subsection{Chromatic Kummer theory.} It was proposed some time ago by Mike Hopkins that the Pontrjagin dual 
spectrum $\bI_{\bC^\times}$ of the sphere should form the units of a good universal target for homotopy theory. 
The top groups $\bZ/2$ were long recognized as classifying invertible graded lines and graded 
invertible algebras, but the meaning of the next group $\bZ/24$ remained mysterious. The recent 
development~\cite{csy} of chromatic Kummer theory led to suggestions that an \emph{cyclotomic 
closure condition} should be invoked to explain the emergence of $\bI_{\bC^\times}$. While our work 
confirms Hopkins' conjecture through dimension~$3$ (and very nearly~$4$) by bringing in the $\bZ/24$ 
group and  killing the next (super-Witt) group, it \emph{does not align so well} with the speculative 
cyclotomic-closure motivation. Instead, the Pontrjagin dual~$\bI_{\bC^\times}$ appears exactly 
as it should, in classifying symmetry 
structures in the target category. The relation to Kummer 
theory is a consequence, not an input.

\subsection{Related prior work.} Our theorems rely on the dualizability properties of fusion and 
braided fusion categories, established in \cite{dsps, bjs, bjss}. The other key input is Kevin 
Walker's~\cite{Wa} presentation of Reshetikhin-Turaev theories as \emph{fully local anomalous} TQFTs, 
living on the boundary of $4$-dimensional Crane-Yetter theories;  see \cite{haioun} for a modern exposition.  
Specifically, a modular tensor category $T$ generates a fully local, 
\emph{invertible} $4$-dimensional TQFT of oriented manifolds.  Invertibility follows from that of~$T$ 
in the higher Morita category of BFCs, a consequence of \emph{factorizability}~\cite{mug, dgno}. 
The \emph{regular module}, the fusion category~$T$ over~$T$, provides a fully local boundary 
theory. This anomalous presentation of RT theories leads to their description as linear theories in 
dimensions~$1/2/3$: the vanishing of the respective oriented bordism groups allows one to trade 
the anomaly for a tangential structure. We extend this to dimension~$0$.

\begin{remark}
A promotion of Walker's anomalous presentation to a fully local RT theory was attempted in~\cite{h}. 
However, that construction simply reformulates the anomalous RT theory, losing the manifold invariant: 
at top level, it gives a line (the Crane-Yetter line) and a vector therein. 
Extracting the numerical invariant requires a trivialization of the Crane-Yetter line. Walker's 
formulation reduces the problem of point-localizing RT theories \emph{precisely} to the \emph
{fully local} trivialization of Crane-Yetter theories. This was missed in~\cite{h}, and  
we address it in our paper. 
\end{remark}

\subsection{Extended summary.} Here is a guide to the subsequent sections of the paper.
\begin{enumerate}\itemsep0ex
\item[1.] Section~\ref{boson} constructs the bosonic enlarged category $\mrE\fus$, explaining 
the role of the symmetric group.
\item[2.] Section~\ref{fermion} repeats the construction for fusion super-categories, leading to $\mrE S\fus$. 
\item[3.] Section~\ref{spinomaly} reviews the projective $\Spin$-invariance of our TQFTs (Theorem~\ref{thm3}) 
and the topological central charge $\mu =\exp(2\pi i c/6)$. Tangential structures related to $p_1$ on $\Spin$ 
manifolds are related to anomalies (Theorem~\ref{thm4}). 
\item[4.] Section~\ref{spinv} proves the $\Spin_3$-invariance of TQFTs defined from super-categories, 
and of interfaces between them, confirming a conjecture from~\cite {dsps}. 
\item[5.] Section~\ref{orient} reviews the removal of Spin dependence in the presence of modularity. 
\item[6.] Section~\ref{canorient} classifies $p_1$-twisted orientations, introducing 
the \emph{canonical} orientation. 
\item[7.] Section~\ref{Qp1} introduces \emph{complex} $p_1$-structures, enabling the lift of 
the central charge to $\bC$. We verify its match with the central charge of boundary CFTs.   
\item[A.] This Appendix shows how the new objects in $\mrE\fus$ lead to novel $4$-dimensional TQFTs,  
with the example of a $3$-dimensional gauge theory.
\item[B.] This Appendix reviews some relevant bordism groups and maps between them.  
\end{enumerate}

\subsection*{Acknowledgements.} We had many fruitful conversations on this topic with Mike Hopkins, 
who taught us the role of the Pontrjagin dual $\bI_{\bC^\times}$ of the sphere. Over the years, we 
had useful exchanges with Chris Douglas and Andr\'e Henriques on the topic 
of Chern-Simons theory. Although our project predates those, we also benefitted from lectures 
Theo Johnson-Freyd and David Reutter on universal targets for TQFTs. Noah Snyder gave us patient accounts of 
the issues surrounding the invariance conjectures from \cite{dsps}. Repeated questions by several colleagues 
(Alexei Kitaev, Milo Moses, Nivedita, Nikita Sopenko) led us to lift the central charge to~$\bC$ by means of 
$\bC p_1$-tangential structures in \S\ref{Qp1}. 
The third author presented early versions of this work on several 
occasions, most recently at the 2022 Annual Meeting of our Simons Collaboration.

All authors were funded by the Simons Collaboration on Global Categorical Symmetries. DSF was partially 
supported by National Science Foundation grant DMS-2005286. This work was performed in part at Aspen 
Center for Physics, which is supported by NSF grant PHY-2210452.  We also thank the Harvard Center 
of Mathematical Sciences and Applications for its support. CS acknowledges funding by the Deutsche 
Forschungsgemeinschaft (DFG, German Research Foundation) through the Collaborative Research Centers SFB 
1085: Higher invariants--224262486 and SFB: 1624 Higher structures, moduli spaces and integrability--506632645.

\subsection*{Notation and abbreviations.} 
\begin{itemize}\itemsep0ex
\item TQFTs = topological quantum field theories
\item BF(S)Cs = braided fusion (super-)categories
\item $\vect$ = $\bC$-linear category of finite-dimensional vector spaces
\item $\vect^{\otimes}, \vect^{br}$ denotes the same with its tensor/braided structure
\item $\vect^{br,\zeta}_{\bZ/2}$ the BFC of $\bZ/2$-graded vector spaces with one of the 
four braided structures, labelled by $\zeta\in \{\pm1, \pm i\}$ 
\item $\cats$ =(symmetric tensor) linear $2$-category of finite semi-simple $\bC$-linear categories 
\item $\fus$ = (symmetric tensor) $3$-category of: fusion categories, finite semi-simple 
bimodule categories, functors and natural transformations
\item $\braid$ = (symmetric tensor) $4$-category of BFCs: a full subcategory of algebras in $\fus$, with 
bi-modules in $\fus$ as $1$-morphisms  and compatible 
higher morphisms\footnote{See \cite{jfs} for an iterative construction of higher categories 
of algebras.} 
\item $W$ = Witt group of invertible BFCs modulo centers
\item Prefixed $S$ means `super' (including super-vector spaces, Clifford algebras, etc.): \\
$S\vect$, $S\cats$, $S\vect^\otimes$, $S\fus$, $SW$, $S\braid$ 
(cf.~also \S\ref{fermion})
\item $\bS$ = sphere spectrum, $HA$ the Eilenberg-MacLane spectrum for an Abelian group $A$
\item $\bI_{\bC^\times}$ = Pontrjagin dual of $\bS$
\item $\cT_X$ denotes the TQFT defined by an object $X\in\mrE S\fus$ (framed, unless otherwise stated)
\item $\mathrm{GL}_1 T$ denotes the (higher) group of invertibles in a tensor (higher) category $T$
\item $\iota_{>k} C$ denotes the (higher) subcategory of $C$ keeping only invertible $j$-morphisms for $j>k$
\item $[x]$ denotes the isomorphism class of an object $x$ in the ambient (higher) category
\item For a characteristic class $\vep$ of $B\Spin_n, B\SO_n$, denote by $\Spin^{\vep}_n, \SO^{\vep}_n$ 
the groups classified by the homotopy fibers of~$\vep$ (as maps to the respective Eilenberg-MacLane space). 
\end{itemize}


\section{Main result: bosonic case} 
\label{boson}

We state and prove the main theorem for \emph{bosonic} TQFTs --- those not including super-vector 
spaces. The construction of $\mrE\fus$ is performed in the proof below. Follow-up remarks  
elaborate on additional properties of our construction.
 
\begin{maintheorem}
There exists symmetric monoidal $3$-category $\mrE\fus \equiv \bigoplus_{\widetilde W} \fus_{\widetilde{w}}$ 
satisfying properties (i)-(vii) in \S\ref{results}. Moreover, it 
is initial among symmetric tensor $3$-categories~$\bT$ which
\begin{enumerate}\itemsep0ex
\item admit finite colimits, compatible with the tensor structure, 
\item receive a symmetric tensor functor~$\Phi$ from~$\fus$, preserving finite colimits 
\item contain a unique (up to isomorphism) point generator of every $\cats$-valued $1/2/3$-dimensional TQFT.
\end{enumerate} 
\end{maintheorem}
\noindent
The units in $\mrE\fus$ form a cyclic group of order $6$; they are characterized by their behavior 
under symmetry (Remark~\ref{projsym}).  Their reduced central charges are in $2\bZ \pmod {6\bZ}$, 
but lift to $4\bZ \pmod{24\bZ}$ under refinement of structure (see \S\ref{spinomaly}). 
Here is an amusing consequence:
\begin{corollary}
The framed  $3$-manifold invariants seen by \emph{invertible} bosonic theories are the 
characters of $\pi_3^s \cong \bZ/24$ which factor through $\bZ/6$. 
\end{corollary}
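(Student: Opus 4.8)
The plan is to separate the Corollary into its two inclusions and reduce each to facts already available. First I would pin down which theories are in play. By property~(i) and the Cobordism Hypothesis~\cite{l}, a framed invertible \emph{bosonic} TQFT --- one valued in $\mrE\fus$ --- is $\cT_X$ for an invertible object $X\in\mrE\fus$, and by properties~(v)--(vi) together with the remark following the Main Theorem the group of invertible objects of $\mrE\fus$ is cyclic of order six, generated by $U$. So these theories are exactly $\cT_{U^{\otimes k}}$, $k\in\bZ/6$. Each sends a closed framed $3$-manifold $M$ to a scalar depending only on $[M]\in\Omega^{\mathrm{fr}}_3\cong\pi_3^s\cong\bZ/24$, i.e.\ to a character $\chi_k\in\widehat{\pi_3^s}$; since tensoring theories multiplies $3$-manifold invariants, $k\mapsto\chi_k$ is a group homomorphism $\bZ/6\to\widehat{\pi_3^s}$, and the set of invariants realized is the cyclic subgroup $\langle\chi_1\rangle$. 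The target is then that $\langle\chi_1\rangle$ equals the six characters that are trivial on the order-$4$ subgroup $\{0,6,12,18\}\subset\pi_3^s$ --- i.e.\ those factoring through $\bZ/6$.

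For ``$\subseteq$'' I would invoke property~(vi): each $\cT_{U^{\otimes k}}$ factors through oriented $3$-manifolds with $p_1$-structure, so $\chi_k$ is pulled back along the forgetful map $\Omega^{\mathrm{fr}}_3\to\Omega^{\SO^{p_1}}_3$. By the bordism computation of Appendix~\ref{tangents} (see also Remark~\ref{Smodeta}) one has $\Omega^{\SO^{p_1}}_3\cong\bZ/6$, so the image of this map has order dividing $6$; as every subgroup of $\bZ/24$ of index dividing $6$ contains $\{0,6,12,18\}$, each $\chi_k$ is trivial on $\{0,6,12,18\}$, i.e.\ factors through $\bZ/6$. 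In particular at most six invariants occur.

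For ``$\supseteq$'' it is enough to show $\chi_1$ has order exactly $6$, since then $\langle\chi_1\rangle$ is all six characters factoring through $\bZ/6$; equivalently, $\chi_1^2\neq1$ and $\chi_1^3\neq1$. Here the central charge of \S\ref{spinomaly} enters. I would argue: $\chi_1^2=\chi_2$ is the $3$-manifold invariant of $\cT_{U^{\otimes2}}$, whose reduced central charge is $\not\equiv0\pmod6$ --- by the explicit computation of \S\ref{spinomaly} and \S\ref{Qp1}, the generator $U$ has reduced central charge $\equiv\pm2\pmod6$, lifting to $\pm4\pmod{24}$, a generator of $4\bZ/24\bZ\cong\bZ/6$ --- so $\chi_2\neq1$. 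And $\chi_1^3=\chi_3$ is the invariant of $\cT_{U^{\otimes3}}$, the unique order-two unit; by \S\ref{alphaomega} this is the invertible theory $\nu$, which carries central charge $12$ on $p_1$-oriented manifolds and detects the unstable $\bZ/2$'s in the relevant $3$-dimensional bordism groups, so its $3$-manifold invariant is the nontrivial order-two character and $\chi_3\neq1$. Thus $\chi_1$ has order $6$; combined with the surjectivity of $\Omega^{\mathrm{fr}}_3\to\Omega^{\SO^{p_1}}_3$ (Appendix~\ref{tangents}) this identifies $\langle\chi_1\rangle$ with the full group of characters of $\pi_3^s$ factoring through $\bZ/6$.

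The setup and the inclusion ``$\subseteq$'' are formal. The real obstacle is ``$\supseteq$'': it must import the explicit value of the central charge of $\cT_U$ from \S\ref{spinomaly}--\S\ref{Qp1} and the identification $\cT_{U^{\otimes3}}=\nu$ from \S\ref{alphaomega}. The subtle point is exactly the one emphasized in \S\ref{sabotage}: \emph{the reduced central charge modulo~$6$ does not by itself determine the framed $3$-manifold invariant}, because of the cokernel $\bZ/2$ of $J\colon\pi_3\SO(3)\to\pi_3^s$; so $\chi_1^3\neq1$ is genuinely a statement about the order-two theory $\nu$ and not a consequence of the central charge mod~$6$ alone.
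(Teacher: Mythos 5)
Your proof is correct, and it takes a more hands-on route than the paper's. The paper essentially reads the statement off from structural facts: the units of $\mrE\fus$ form a $\bZ/6$ generated by $U$ (from the middle group in \eqref{bosonkinv}), $\cT_U$ is \emph{defined} on $\SO^{p_1}(3)$-manifolds as the standard character of $\pi_0MT\SO^{p_1}(3)\cong\bZ/6$ and then pulled back along the surjection $\pi_3^s\to\bZ/6$ (\S\ref{alphaomega}); equivalently, $\mathrm{GL}_1(\mrE\fus)$ is the connective cover of $\mathrm{Map}(\bS/\eta;\Sigma^3\bI_{\bC^\times})$ and $\pi_3(\bS/\eta)\cong\bZ/6$ (Remark~\ref{Smodeta}), so both inclusions come for free. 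You instead compute the order of the generating character $\chi_1$ prime by prime: $\chi_1^2\neq 1$ from $\mu(U)^2\neq 1$ (which only controls the character on the index-two subgroup $J(\pi_3\SO(3))=2\bZ/24$), and $\chi_1^3\neq 1$ from the identification $\cT_{U^{\otimes 3}}=\spit$ and the nonvanishing of $\spit$ on the Lie-group-framed $S^3$. What your approach buys is an explicit acknowledgement of the $\coker J$ subtlety of \S\ref{sabotage} — that the reduced central charge alone cannot see the value of the character on odd classes, so the order-two part genuinely requires the separate input of $\spit$; what the paper's approach buys is brevity, since packaging $\cT_U$ as the standard character of $\bZ/6$ settles both directions at once. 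One cosmetic caveat: your aside that $\spit$ "carries central charge $12$ on $p_1$-oriented manifolds" is harmless but not what you need — the relevant fact is only that its framed invariant is the nontrivial sign character of $\pi_3^s$, which \S\ref{alpha} supplies.
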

\noindent
We denote by~$U\in \mrE\fus$ the object defining the fourth power of the standard character 
of~$\pi_3^s$. By contrast, the fermionic theories in the next section can represent the full $\bZ/24$. 

\medskip
We construct $\mrE\fus$ by converting Walker's relative (anomalous) RT theories to absolute 
ones. Trivializing the~$4D$ anomaly theory defined by modular $T$ requires an object $X\in\mrE\fus$, 
together with a Witt equivalence $T \equiv Z(X):= \mathrm{End}(\mathrm{Id}_X)$. If $[T]\neq0$ 
in the Witt group, $X$ cannot be a fusion category. Instead, we add the missing pre-centers 
as direct summands to $\fus$, as follows. Recall that for a fusion category $F$, $Z(F)\equiv 
F\boxtimes F^\vee$ by a Morita equivalence in $\fus$ which matches the second algebra structures 
on the two sides (the braiding with the contraction). The imagined generators~$\{X\}$ 
of a collection of RT theories, whose centers $\{T\}$ represent the non-trivial Witt
classes, will now be portrayed by their centers $T$ (viewed as fusion categories over $T$), 
whereupon we rig the tensor product so that $T\boxtimes T^{rev}$ becomes isomorphic to $T$, 
to match what $X\boxtimes X^\vee$ should produce in~$\mrE\fus$. We must of course be coherent 
with respect to the symmetric tensor structure.     

\begin{proof}
For each $w\in W$, choose a representing non-degenerate braided fusion category $T(w)$, denote by 
$\fus_w$ the category of fusion categories over $T(w)$, and define (provisionally)
\[
E'\fus := \bigoplus\nolimits_{w\in W} \fus_{w} 
\]
with only zero morphisms between distinct summands. Each summand is an invertible $\fus$-module, 
because of the invertibility of the categories $T(w)$ in $\braid$. 

An obvious attempt at placing a symmetric tensor structure on $E'\fus $ matches the addition law 
in the Witt group, and is implemented by multiplications 
\[
\fus_w\boxtimes \fus_{w'} \to \fus_{ww'}. 
\]
To execute these operations, we must use isomorphisms $T(w)\boxtimes T(w') \cong T(ww')$ in~$\braid$. 
For instance, if $w'=w^{-1}$, and we have opted for the reverse-braided category $T(w)^{rev}$ to 
represent $w^{-1}$, the tensoring operation must be done \emph{over $T(w)^{rev}$}. 
This is how the object $X=T(w)$ satisfies $X\boxtimes X^\vee \equiv T(w)$, by means of an equivalence 
as \emph{algebra objects in $\fus$}.  This attempt meets obstructions and ambiguities 
from the incoherent choice of representatives $T(w)$. 

The obstruction problem is controlled by cocycles valued in the (higher) groups of automorphisms of 
the invertible $\fus$-module categories $\fus_w$. These all agree with the group $B^3\bC^\times$  
of units of $\fus$. More precisely, the Witt group~$W$ has a natural categorification to the $4$-group 
$\mathrm{GL}_1(\braid)$ of invertible BFCs, 
and we have 
\[
\pi_0\, \mathrm{GL}_1(\braid) = W, \qquad \pi_4\, \mathrm{GL}_1(\braid) = \bC^\times,
\] 
the other groups being zero because of the absence of non-trivial invertible objects in~$\vect, \cats$ 
and~$\fus$.\footnote{Uniqueness of $\vect^\otimes$ as invertible object in~$\mrE\fus$ follows 
from the multiplicativity of the Frobenius-Perron dimension.} The symmetric tensor structure 
on BFCs defines an $E_\infty$-map (up to isomorphism)
\begin{equation}\label{omega}
\varphi: \bS\otimes W \to \mathrm{GL}_1(\braid)
\end{equation}
from the `stable spherical Witt group' $\bS\otimes W$, an $\bS$-module which we define from a free resolution 
\begin{equation}\label{freeres}
F_1 W \to F_0W \to W.
\end{equation}
This allows the construction of a symmetric monoidal crossed-product category
\[
\left(\bS\otimes W\right)\ltimes^\varphi \fus,
\] 
involving the $3$-categorical truncation of $\bS$, which we wish to descend to $E'\fus$, 
expressed as an Eilenberg-MacLane crossed product $HW\ltimes \fus$.

This descent  requires an $E_\infty$ factorization of $\varphi$ through the Hurewicz map 
$H: \bS\otimes W \to HW$. The obstruction problem thus concerns the mystery arrow $m$ in the diagram
\begin{equation}
\label{omegalift}
\begin{gathered}
\xymatrix{
& HW\ar@{-->}[dr]^m & \\ 
\bS\otimes W \ar[ur]^H \ar[rr]^\varphi & & \mathrm{GL}_1(\braid).
}
\end{gathered}
\end{equation}
The obstruction is a stable $k$-invariant $k^5_s \in H^5_s(HW; \bC^\times)$, and $m$ admits a torsor 
of choices over the stable group $H^4_s(HW; \bC^\times)$. 
Using~$\bZ$, in first instance, instead of~$W$, to identify the obstruction problem, 
the relevant groups are the stable mapping spaces 
\begin{equation}
\label{bosonkinv}
[H\bZ; \Sigma^3 H\bC^\times] = 0, \quad
[H\bZ; \Sigma^4 H\bC^\times] = \bZ/6, \quad
[H\bZ; \Sigma^5 H\bC^\times] = 0,
\end{equation}
with the middle one generated by the Steenrod operation $Sq^4 \times P^1_3$ on $\bZ/2\times \bZ/3$. 

Use the free resolution \eqref{freeres} to identify the obstruction complex for $m$ with 
$Sq^4 \times P^1_3$ of the resolution for  
$\mathbf{R}\mathrm{Hom}\left(W;\bmu_6\right)$. Obstructions to $m$ are thus uniquely removed by passing to the natural 
extension $e: \widetilde{W}\to W$ by $\bmu_6$, for which 
\[
k^5_s = \left(Sq^4 \times P^1_3\right)\circ e,
\]
solving our obstruction problem of~\eqref{omega}, and completing the construction of~$E\fus$, via the diagram 
\begin{equation}
\label{omegalift2}
\begin{gathered}
\xymatrix{
& H\widetilde{W}\ar[dr]^ {\widetilde{m}} & \\ 
\bS\otimes \widetilde{W} \ar[ur]^H \ar[rr]^\varphi & & \mathrm{GL}_1(\braid).
}
\end{gathered}
\end{equation}
This construction gives the asserted direct sum decomposition 
$\mrE\fus = \bigoplus_{\widetilde{w}\in\widetilde{W}} \fus_{\widetilde{w}}$. 

Full dualizability follows from the realization of summands as fusion categories over  BFCs. 
The classification of invertibles in Property~(vi) follows from the uniqueness of 
$\vect^\otimes\in \fus$ as a Morita-invertible fusion category: an invertible in~$\fus_{\widetilde{w}}$ 
trivializes the class~$[T(w)]$ in the Witt group. Property (vii) stems from the relevant 
bordism group $\bZ/6$, a quotient of $\pi_3^s$: see Appendix~\ref{tangents}.  

To see the characterizing universality, choose a suitable target $3$-category~$\bT$, 
with the distinguished functor~$\Phi_1:\fus\to\bT$. 
Our $\mrE\fus$ will only see those objects in $\bT$ for which the $1/2/3$-portion of the 
generated TQFTs take values in $\Omega\fus=\cats$. In particular, their Drinfeld centers, 
a priori in~$\Omega\bT$, are finite sums of non-degenerate BFCs.  

Given a $\widetilde{w}\in\widetilde{W}$, choose a representative $X\in \mrE\fus$ with center~$T(w)$. 
The $1/2/3$-portion of~$\cT_X$ lands in~$\cats$, so~$X$ has a unique partner~$X'\in \bT$ generating 
that same TQFT in those dimensions. There is an isomorphism $\Phi_1(X\boxtimes X^\vee) \xrightarrow
{\ \sim\ } X'\boxtimes (X')^\vee$ in~$\bT$, inducing the identity on their common 
center~$\cT_X(S^1)= T(w)$, because the two objects generate the same Turaev-Viro $1/2/3$-theory. 
The latter is isomorphic, as an algebra object of~$\fus$, to $X\boxtimes X^\vee$ with its contraction 
algebra structure (see Remark~\ref{fakefusion}), and acts compatibly on~$X$ and~$X'$. Define an 
$\fus$-linear functor $\Phi_{\widetilde{w}}: \fus_{\widetilde{w}} \to \bT$ by
\begin{equation}\label{primefunctor}
Y\mapsto \Phi_{\widetilde{w}}(Y)=Y':= \left(Y\boxtimes X^\vee\right)\boxtimes_{\,T(w)} X'.
\end{equation}
We claim that $Y'$ generates the $1/2/3$-part of the theory $\cT_Y$. Indeed, we have in~$\mrE\fus$ 
that 
\[
Y\boxtimes X^\vee \equiv F, \quad Y\equiv F\boxtimes_{\,T(w)} X
\]
for a fusion category~$F$ with central~$T(w)$-action, so that~$\cT_Y$ is the result of 
sandwiching~$\cT_F$ and~$\cT_X$ with filling~$T(w)$. Since $Y'\equiv F\boxtimes_{\,T(w)}X'$, 
we obtain~$\cT_{Y'}$ by sandwiching~$\cT_F$ and~$\cT_{X'}$ with filling~$T(w)$. The claim 
follows from the agreement of the $1/2/3$-parts of~$\cT_X$ and~$\cT_{X'}$. 

The isomorphism class of~$\Phi_{\widetilde{w}}$ is independent of the representative~$X$, 
showing its uniqueness (as linear functor): 
when~$Y$ is simple, we may partner it with the~$Y'$ of~\eqref{primefunctor} (because of 
its uniqueness up to isomorphism in~$\bT$) and obtain by the same method a canonically 
isomorphic functor. The~$\Phi_{\widetilde{w}}$ assemble to a functor $\Phi: \mrE\fus \to \bT$.  
By construction, it is compatible with the symmetric tensor structure up to coherence 
isomorphisms. The coherences are pushed out from~$\mrE\fus$ by the universality of the 
lifting~\eqref{omegalift2}.
\end{proof}

\begin{remark}
We do not determine the extension class~$e$: it is what it is. For 
instance, it is known from~\cite{mug} 
that~$W$ has no $3$-torsion, so the $\bmu_3$-component is a split extension. 
\end{remark}

\begin{remark}[Symmetry] \label{projsym}
Our homotopy calculation can be made more insightful as follows. The fiber of the Hurewicz morphism 
$\bS \to H\bZ$ is the infinite loop space $\Omega^\infty \bS_{>0}$. By the Barratt-Priddy-Quillen theorem, 
this is the classifying space of the homotopy-abelianized 
symmetric group. The lifting problem~\eqref{omegalift} requires a \emph{trivialization} of $\varphi$ on 
$\Omega^\infty \bS_{>0}$, that is,  of the symmetric group action. This  is executed in  diagram~\eqref
{omegalift2}. Without it, we get a \emph{projective} symmetric monoidal structure on~$E'\fus$; 
see~\cite{projsym}.   

The $\bmu_6$-ambiguity in~\eqref{omegalift2} is explained similarly. The symmetric group action 
on powers of~$U$ defines a stable map $s_U:\Omega^\infty\bS_{>0} \to \Sigma^3H\bC^\times$, the classifying 
space of the unit fusion category. The Hurewicz fibration $\bS\to H\bZ$ and the vanishing of the 
relevant cohomology of $\bS$ give 
\[
[\Omega^\infty\bS_{>0}; \Sigma^3H\bC^\times] 
\cong [\Sigma^{-1}H\bZ; \Sigma^3H\bC^\times] = \bZ/6,
\]
under which the map $s_U$ becomes the generator.
\end{remark}

\begin{remark}[Universality] \label{univ}
Without the uniqueness requirement in~(viii), we could forgo the relation $U^{\otimes 6} \equiv  \vect^\otimes$ 
in $\mrE\fus$ and opt for an extension of $W$ by $\bZ$ instead of $\bZ/6$. The undesirable consequence 
is to create many instances of each $1/2/3$-truncated RT theory, differing only by powers of the 
phantom object~$U ^{\otimes 6}$ on the value of a point. Our construction instead ensures that distinct 
TQFTs are already distinguishable on their $1/2/3$ portion.  
\end{remark}

\begin{remark}[Decomposition into simples] A \emph{simple object} $X\in \mrE \fus$ is defined by the 
condition $\cT_X(S^2) =\bC$ (triviality of the second center). For $X\in \fus$, this agrees with 
indecomposability of the fusion category; moreover, every object in $\fus$ is a direct sum of 
simples. This generalizes to fusion categories over an invertible object $T\in \braid$; in fact, 
simplicity is detected by forgetting the $T$-action, because a splitting of the fusion category 
decomposes its Drinfeld center accordingly. This settles the direct sum decomposition into simples 
also for the new components of $\mrE \fus$. 
\end{remark}

\begin{remark}[`Fake fusion' calculus] \label{fakefusion}
A simple object $X\in\mrE\fus$ with center $T:=\mathrm{End}(\mathrm{Id}_X)$ lies in the 
$\mrE\fus$-component $\fus_T$, where it is represented by $T\otimes U^{\otimes k}$, where 
$T$ is viewed as fusion category over $T$, and the $U$-cofactor modifies the symmetry as explained 
in Remark~\ref{projsym}. We have a natural isomorphism of \emph{algebra objects in $\fus$} 
(with the $U$-cofactors cancelling out)
\[
X\boxtimes X^\vee \equiv T,
\]
with their natural actions on $X\in \fus_T$. This determines the calculus on~$X$: 
for instance, the action of a group~$G$ on~$X$ is equivalent to a homomorphism from~$G$ to the 
$3$-group of invertible $T$-modules. When the latter is a fusion category, that is the (higher) 
Brauer-Picard group of $X$  \cite{eno}. We will exploit this when discussing orientability (Theorem~\ref{thm5}). 

More generally, if $X_1$ and $X_2$ are objects with centers~$T_1$ and~$T_2$, 
then $\mathrm{Hom}_{\mrE\fus}(X_1, X_2) =0$, unless $T_1$ and $T_2$ are Witt-equivalent and the $X_i$ 
sit over the same point in the $\bmu_6$-extension torsor over $[T_1] =[T_2]$. In the latter case, 
an equivalence $T_1\boxtimes T_2^{rev} \equiv Z(F)$ identifies the $\mathrm{Hom}$ $2$-category 
with that of semi-simple $F$-module categories.
\end{remark}

\begin{remark}[$k$-invariant of $\mathrm{GL}_1(\mrE\fus)$]
\label{Smodeta}
The (higher) group of units of $\mrE\fus$ has the two non-zero homotopy groups 
$\pi_0 =\bZ/6$, $\pi_3=\bC^\times$, related by the universal $k$-invariant $Sq^4\times P^1_3$. 
More precisely, in terms of the co-fiber $\bS/\eta$ in the sequence
\[
\bS^1/2 \overset {\ \eta\ }{\rightarrowtail}\bS 	\twoheadrightarrow\bS/\eta,
\]
the spectrum $\mathrm{GL}_1(\mrE\fus)$ is the connective cover of the \emph{$3$-shifted Pontrjagin dual} 
$\mathrm{Map}\left(\bS/\eta; \Sigma^3\bI_{\bC^\times}\right)$.

The class $\eta$ represents the Koszul sign rule in the symmetric tensor structure on super-vector 
spaces, and killing it reflects the fact that we do not allow odd vector spaces in our target. 
This gives credence to $\mrE\fus$ as the universal target for \emph{bosonic} $3$-dimensional TQFTs.
\end{remark}

\begin{remark}[$4$ dimensions]
Adding fake fusion categories from $\mrE\fus$ as $1$-morphisms to the $4$-category $\braid$ kills 
the Witt group, and produces a $4$-category with spectrum of units $B \mathrm{GL}_1(\mrE\fus)$. 
This matches the ($4$-shifted) Pontrjagin 
dual of $\bS/\eta$: $\pi_4\left(\bS/\eta\right)=0$. However, this is not quite a universal target 
for $4$-dimensional bosonic TQFTs, as one can reasonably add  objects to $\braid$, such as the fusion 
$2$-categories of~\cite{f2cat}, and their generalizations using objects from $\mrE\fus$, as illustrated 
in Appendix~\ref{exgauge}. Our construction applies equally well to such enlargements of $\braid$, with their 
groups of invertible isomorphism classes replacing $W$. We plan to return to this in a follow-up paper.   
\end{remark}


\section{Main result: fermionic case}
\label{fermion}
The main theorem has a natural generalization when super-vector spaces are included. Recall that the 
tensor structure on the category $S\vect^{\otimes}$ of super-vector spaces is symmetric, under 
the Koszul sign rule. This defines an (also symmetric) tensor structure on the $2$-category 
$S\cats$ of finite semi-simple module categories 
over $S\vect^{\otimes}$.

\begin{definition}
A finite, semi-simple \emph{super-category} is one equivalent to the category of modules 
(in super-vector spaces) over a finite complex semi-simple super-algebra. \emph{Functors} between super-categories 
are required to be linear over $S\vect^{\otimes}$. Tensor products are taken over $S\vect^{\otimes}$. 
A \emph{fusion super-category} is an $S\vect^\otimes$-algebra in $S\cats$ in which all objects 
have internal left and right duals. Braidings and $S\braid$ are defined as expected. 
\end{definition} 

\begin{remark}
We refer to \cite{ft3} for a wider discussion, but summarize the main points here:
\begin{enumerate}\itemsep0ex
\item Finite-dimensional, complex, simple algebras in super-vector spaces are Morita equivalent 
to one of $\bC$ or $\cliff(1)$. A simple object in a super-category in $S\cats$ therefore generates an additive 
summand of either $S\vect$ or of the regular $\cliff(1)$-module. 
\item Just as in the bosonic case, we use `fusion' where some authors use `multi-fusion'; every 
indecomposable fusion super-category is isomorphic (in $S\fus$) to one with simple unit. 
\item Indecomposable fusion super-categories have non-degenerate Drinfeld centers, and every fusion 
super-category splits into a direct sum of indecomposables. 
\item A fusion super-category $F$ cannot be pivotal if it includes $\cliff(1)$-lines. Moreover, its 
Drinfeld center $\mathrm{End}_{F\text{-}F}(F)$ and co-center $F\boxtimes_{F\boxtimes F^{op}}F$ 
can be inequivalent.  
\item As a result, Spin structures are needed in the respective TQFTs.
\end{enumerate}
\end{remark}

\begin{maintheorem}
There exists a symmetric monoidal $3$-category $\mrE S\fus \equiv \bigoplus_{\widetilde{w}\in\widetilde 
{SW}} S\fus_{\widetilde {w}}$ 
satisfying properties (i)-(vii) in~\S\ref{results}, with~$S\fus$ replacing~$\fus$ and $\bmu_{24}$ replacing 
$\bmu_6$. It is unique up to isomorphism, and universal as in Theorem~1.
\end{maintheorem}

\begin{proof}
The argument is the same, \emph{mutatis mutandis}. The principal change is that the group 
$\mathrm{GL}_1(S\fus)$ acquires two new homotopy groups $\pi_2= \pi_1 = \bZ/2$, in addition to 
$\pi_3 = \bC^\times$ at the top, from the odd lines and odd Clifford algebras, respectively. 
Specifically, $\mathrm{GL}_1(S\fus)$ is the connected cover of the shifted Pontrjagin dual to $\bS$:
\[
\mathrm{GL}_1(S\fus) = \big(\Sigma^3\bI_{\bC^\times}\big)_{>0}.
\]
There is a similar higher group $\mathrm{GL}_1(S\braid)$, having as $\pi_0$ the super-Witt group $SW$ of 
invertible isomorphism classes in $S\braid$. The obstruction problem is defined by the same 
diagram~\eqref{omegalift}, with $SW, S\fus, Sm$. However, the relevant homotopy groups are now
\begin{equation}
\label{fermionkinv}
\left[H\bZ; \big(\Sigma^3 \bI_{\bC^\times}\big)_{>0}\right] 
	= 0, 	\quad
\left[H\bZ; \big(\Sigma^4 \bI_{\bC^\times}\big)_{>1}\right] 
	= \bZ/24, \quad
\left[H\bZ; \big(\Sigma^5 \bI_{\bC^\times}\big)_{>2}\right] = 0.
\end{equation}
Indeed, we can determine them from the fibration sequence 
\[
\Sigma^{k-4}H\bZ/24 \rightarrowtail 
\left(\Sigma^k \bI_{\bC^\times}\right)_{>k-3} 
\twoheadrightarrow 
\left(\Sigma^k \bI_{\bC^\times}\right)_{>k-6 },
\]
which holds for all $k\in \bZ$, because of the vanishing $\pi_4^s = \pi_5^s =0$. Combining this 
for~$k=3,4,5$ with the vanishing of $[H\bZ; \Sigma^k\bI_{\bC^\times}]$ for $k>0$, we are led 
from~\eqref{fermionkinv} to the groups  
\[
\left[H\bZ;\Sigma^{-1} H\bZ/24\right] 
	= 0, 	\quad
\left[H\bZ; H\bZ/24 \right] 
	= \bZ/24, \quad
\left[H\bZ; \Sigma H\bZ/24\right] = 0.
\]

As before, a free resolution $F_1SW \to F_0SW \to SW$ converts the obstruction complex for our 
desired lifting of $S\varphi$ in~\eqref{omegalift} into the resolution of 
$\mathbf{R}\mathrm{Hom}\left(SW;\bZ/24\right)$. 
The obstruction problem is canonically resolved by a central extension 
\[
\bZ/24 \rightarrowtail \widetilde{SW} \twoheadrightarrow SW, 
\]
allowing us to define
\[
\mrE S\fus = 
\bigoplus\nolimits_{\widetilde{w}\in \widetilde{SW}} \,(S\fus)_{T(w)}.
\]
The universal properties are seen in the same way as for $\mrE\fus$. 
\end{proof}

\subsection{Invertible TQFTs.}\label{swcenter}
The new $\mrE S\fus$  contains $24$ isomorphism classes of invertibles, versus only the unit 
$S\vect^{\otimes}$ in ~$S\fus$. The invertibles represent the $24$ invertible framed 
TQFTs in dimension $3$, matching the $24$ distinct possible symmetric monoidal structures 
on an invertible object. The center $\bmu_{24}$ of $\widetilde{SW}$ is naturally identified with 
the Pontrjagin dual of~$\pi_3^s$, and the generating theory~$\ferm$ (see \S\ref{alphaomega}), 
which defines the standard character $\pi_3^s\to \bC^\times$, is characterized by a symmetry akin 
to that for $U$, in Remark~\ref{projsym}.


\section{Anomalous TQFTs and reduced central charge}
\label{spinomaly}
We discuss here the \emph{anomalous} versions for the TQFTs $\cT_X$ defined by objects 
$X\in \mrE S\fus$, their linearizations in tangential structures, and their central charges. 
We focus on $\Spin^{rp_1}_n$-structures for $n=2,3,\infty$, with tangent bundles classified 
by the homotopy fibers of $rp_1: B\Spin_n \to \Sigma^4H\bZ$. Integrality confines 
$r$ to $\frac{1}{4}\bZ$ for $n=2,3$, but only to $\frac{1}{2}\bZ$ for $n=\infty$. 
Oriented structures are discussed in the next section. 

On $3$-manifolds, $\Spin^{p_1/4}_3$-structures are equivalent to \emph{$3$-framings}, 
while $\Spin^{p_1/2}_3$-structures are equivalent to \emph{stable framings}. This is because 
$B\Spin^{p_1/4}_3$ is $4$-connected, while $B\Spin^{p_1/2}_3$ agrees with the homotopy fiber of 
$B\SO_3 \to B\SO$ through dimension $4$. Going further, $\Spin^{p_1}_3$-structures are a 
step towards the $\Spin^{\bC p_1}_3$-structures we discuss in~\S\ref{Qp1}. The reader may wish 
to consult Appendix~\ref{tangents} and the bordism groups in Table~\ref {mytable}. 

\subsection{Action of $\Spin_3$ via $\mu$.} \label{mu}
The Cobordism Hypothesis yields a change-of-framing action of $\mathrm{O}_3$ on the  $3$-dualizable 
objects and morphisms in the $3$-category $\mrE S\fus$. In our case, those assemble to the full 
underlying groupoid $\iota_{>0}\mrE S\fus$, because every object is $3$-dualizable \cite{bjs}. 
Upon restriction, the group~$\Spin_3\to \mathrm{O}_3$ must act via its lowest homotopy group 
$\pi_3 =\bZ$, defining a multiplicative $3$-automorphism of the identity of~$\mrE S\fus$. This gives a 
$3$-automorphism~$\mu(X)$ on each object~$X$, which is a scalar when~$X$ is simple. 

A unit local change in $3$-framing acts via the generator of $\pi_3\Spin_3$, and we conclude 

\begin{theorem}\label{thm3}
In the $3$-framed TQFT $\cT_X$ defined by a simple object~$X$, $3$-morphisms transform by~$\mu(X)$ 
under a unit local change in $3$-framing. Lower morphisms are unchanged, up to isomorphism. 
Moreover, $\mu(X)$ is multiplicative in~$X$.\qed 
\end{theorem}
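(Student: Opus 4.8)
The strategy is to extract the statement directly from the Cobordism Hypothesis, using the computation of the homotopy groups of the relevant automorphism groups that has already been assembled in the paper. First I would recall the precise mechanism: for a symmetric monoidal $3$-category with duals, the Cobordism Hypothesis identifies the space of (fully dualizable) $3$-framed TQFTs with the underlying groupoid $\iota_{>0}\mrE S\fus$, together with its action of the orthogonal group $\mathrm{O}(3)$ coming from the change of framing. Restricting to the identity component and passing to the connected group, we get an action of $\Spin(3)$ on $\iota_{>0}\mrE S\fus$. The whole point is that this action is controlled by a map of spaces $B\Spin(3)\to B\mathrm{Aut}(\iota_{>0}\mrE S\fus)$, and since $\Spin(3)=S^3$ is $2$-connected with $\pi_3\Spin(3)=\bZ$, only the $\pi_3$ of the target can receive anything nontrivial.

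Next I would pin down the target. For a \emph{simple} object $X$, the relevant automorphism space is the loop space of $\iota_{>0}\mrE S\fus$ based at $X$, whose homotopy groups are $\pi_k$ for $k\ge 1$ of the higher group $\mathrm{GL}_1$ of the component $S\fus_{T(X)}$ containing $X$. By the module structure (Property (iv)) this component is equivalent as a module to $S\fus$, so its relevant homotopy is that of $\mathrm{GL}_1(S\fus)$, which the proof of the second Main Theorem identifies with $(\Sigma^3\bI_{\bC^\times})_{>0}$: the only homotopy groups in the range relevant to a $3$-framed theory on objects are $\pi_1=\pi_2=\bZ/2$ (odd lines, odd Clifford algebras) and $\pi_3=\bC^\times$ (scalar $3$-automorphisms). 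The key observation is then purely a matter of connectivity: a map out of $S^3$ lands in $\pi_3$ only, so the $\Spin(3)$-action on $X$ is detected by a single element of $\pi_3\,\mathrm{GL}_1(S\fus)=\bC^\times$, which I call $\mu(X)$, and a unit generator of $\pi_3\Spin(3)$ acts by $\mu(X)$ on $3$-morphisms while acting trivially on the lower data up to isomorphism (the obstruction to triviality of the action on $1$- and $2$-morphisms lives in $\pi_1,\pi_2$, hit by nothing). A unit local change in $3$-framing is, by definition of the $\Spin^{p_1/4}(3)\simeq$ framing dictionary recalled at the start of \S\ref{spinomaly}, exactly the generator of $\pi_3\Spin(3)$, which gives the statement.

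The main obstacle is bookkeeping rather than conceptual: making precise that ``the $\Spin(3)$-action on the fully dualizable objects is the looping of a pointed map $B\Spin(3)\to B\,\mathrm{GL}_1(\cdots)$'' in the generality of a $3$-category, and that the relevant stabilizer/automorphism spectrum for a simple $X$ really is (the connective truncation of) $\mathrm{GL}_1(S\fus)$ rather than something larger — this uses simplicity ($\cT_X(S^2)=\vect$, i.e.\ trivial second center) to rule out extra automorphisms coming from the module structure. Once that identification is in hand, the connectivity count $\pi_{<3}\Spin(3)=0$ does all the work, and there is nothing further to compute. The scalar nature of $\mu(X)$ is automatic since $\pi_3\,\mathrm{GL}_1(S\fus)=\bC^\times$ consists precisely of scalar automorphisms of the identity $3$-morphism.
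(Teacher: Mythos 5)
Your argument is the same as the paper's: the Cobordism Hypothesis gives the $\mathrm{O}(3)$ change-of-framing action on $\iota_{>0}\mrE S\fus$, the $2$-connectivity of $\Spin(3)$ forces the action on a simple $X$ through $\pi_3\Spin(3)=\bZ$ into the top (scalar) homotopy group of $\mathrm{Aut}(X)$, and a unit local reframing is the generator of $\pi_3\Spin(3)$. One small imprecision that does not affect the conclusion: the based loop space at a general simple $X$ is $\mathrm{Aut}_{\mrE S\fus}(X)$, whose lower homotopy groups (invertible bimodules, invertible central objects) need not agree with those of $\mathrm{GL}_1(S\fus)$ --- but since $\Spin(3)$ only reaches the top group, which is $\bC^\times$ by simplicity, the argument goes through unchanged.
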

\noindent
An object $X\in \mrE S\fus$ is $\Spin_3$-invariant precisely when $\mu(X) =1$. 
In the next section, we show this to be so when $X=F \in S\fus$; for fusion categories, this was 
conjectured in \cite{dsps}. Then,~$\cT_F$, \emph {a~priori} defined on $3$-framed 
manifolds, factors uniquely through $\Spin$-manifolds. 

\begin{proposition}\label{kermu}
$\mu$ surjects the center $\bmu_{24}\subset \widetilde{SW}$ 
onto the $12^{th}$ roots of unity $\bmu_{12}\subset \bC^\times$. 
\end{proposition}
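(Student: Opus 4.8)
The plan is to restrict $\mu$ to the central subgroup $\bmu_{24}\subset\widetilde{SW}$, identify this restriction with evaluation of characters of $\pi_3^s$ at one distinguished class, and then read off the image from the cokernel of the $J$-homomorphism.

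First I would reduce the statement to a single value. By \S\ref{swcenter}, the subgroup $\bmu_{24}\subset\widetilde{SW}$ is the cyclic group generated by the topological free fermion $\ferm$, and $\mu$ is multiplicative, so $\mu(\bmu_{24})=\langle\mu(\ferm)\rangle$; hence it suffices to show that $\mu(\ferm)$ is a \emph{primitive} $12$th root of unity. To compute $\mu(\ferm)$: by Theorem~\ref{thm3} it is the scalar by which $3$-morphisms of $\cT_\ferm$ transform under a unit local change of $3$-framing. Since $\cT_\ferm$ is invertible, its value on a closed framed $3$-manifold is a character $\pi_3^s\to\bC^\times$, namely (by \S\ref{swcenter}) the standard inclusion $\chi\colon\pi_3^s\xrightarrow{\ \sim\ }\bmu_{24}\subset\bC^\times$; and a unit local reframing shifts the framed bordism class of a closed $3$-manifold by the image $\delta\in\pi_3^s$ of the generator of $\pi_3\SO(3)\cong\pi_3\Spin(3)\cong\bZ$ under the $J$-homomorphism. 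Therefore $\mu(\ferm)=\chi(\delta)$; equivalently $\mu(\ferm)=\exp\!\big(2\pi i\,c(\ferm)/6\big)=\exp(2\pi i/12)$, the topological free fermion having central charge $c=\tfrac12$.

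It then remains to pin down the order of $\delta$. As emphasized in \S\ref{sabotage} and recorded in Appendix~\ref{tangents}, $J\colon\pi_3\SO(3)\to\pi_3^s\cong\bZ/24$ has cokernel $\bZ/2$ --- equivalently, $\pi_3\SO(3)\to\pi_3\SO$ is multiplication by $2$ while $J\colon\pi_3\SO\twoheadrightarrow\pi_3^s$ --- so $\delta$ generates the unique subgroup of order $12$ in $\pi_3^s$. Because $\chi$ is an isomorphism onto $\bmu_{24}$, the element $\mu(\ferm)=\chi(\delta)$ has order exactly $12$, so $\mu(\bmu_{24})=\langle\mu(\ferm)\rangle=\bmu_{12}$. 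As a check, $\ker\!\big(\mu|_{\bmu_{24}}\big)=\langle\ferm^{\otimes 12}\rangle$ then has order $2$, matching the identification of $\ferm^{\otimes 12}$ with the $\Spin(3)$-invariant order-two theory $\spit$ of \S\ref{sabotage}.

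The one step that genuinely needs care --- though it is essentially already in place from the earlier sections --- is the pair of claims feeding the computation above: that a unit local reframing acts precisely through $\delta=J(\text{generator of }\pi_3\SO(3))$, and that this $\delta$ has order $12$ rather than $24$ in $\pi_3^s$. Once that is granted, the rest is Pontrjagin duality for the cyclic group $\bZ/24$.
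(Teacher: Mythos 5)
Your proof is correct, but it takes a different route from the paper's. The paper's proof is a one-line homotopy-theoretic computation: it identifies $\mu$ on the center with the Atiyah--Hirzebruch differential $d_4\colon (\pi_3^s)^\vee \to H^4\big(B\Spin(3);\bC^\times\big)\cong\bC^\times$ in the spectral sequence for $\bI^*_{\bC^\times}MT\Spin(3)$ and asserts that this $d_4$ has kernel $\bZ/2$ and image $\bmu_{12}$. You instead unwind what that differential means geometrically: reduce by multiplicativity to the generator $\ferm$ of the center, identify $\mu(\ferm)$ via Theorem~\ref{thm3} as the value of the standard character $\chi\colon\pi_3^s\xrightarrow{\sim}\bmu_{24}$ on the class $\delta=J(1)$ by which a unit local reframing shifts the framed bordism class, and then use that $\pi_3\SO(3)\to\pi_3\SO$ has index $2$ while the stable $J$ surjects onto $\pi_3^s$, so $\delta$ has order exactly $12$. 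The underlying topological input is identical --- the $\bZ/2$ cokernel of the unstable $J$-homomorphism, which is precisely what drives the paper's $d_4$ --- but your version is more elementary and self-contained (no spectral sequence), at the cost of being specific to the invertible/central part, whereas the paper's formulation of $d_4$ sits inside the machinery it reuses later (e.g.\ in Proposition~\ref{ofgroups}). Your closing consistency check that $\ker(\mu|_{\bmu_{24}})=\langle\ferm^{\otimes 12}\rangle$ matches $\spit$, with $\mu(\spit(pt))=\mu(U^{\otimes 3})=1$, is exactly the paper's ``kernel $\bZ/2$'' statement.
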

\begin{proof} The invertible theories~$\psi^{\otimes k}$ are defined, as framed theories, from 
the~$24$ units in $\mrE S\fus$. The complete obstruction to their $\Spin_3$-invariance is then the differential 
\[
d_4: \left(\pi_3^s\right)^\vee \to H^4\big(B\Spin_3; \bC^\times\big) \cong \bC^\times
\]
in the Atiyah-Hirzebruch spectral sequence computing $\bI^*_{\bC^\times} MT \Spin_3$:  
there are no other differentials in this corner of the sequence. The computation of $\pi_0MT\Spin_3 =\bZ/2$ 
(Appendix~\ref{tangents}) shows 
that $d_4$ has kernel $\bZ/2$ and image $\bmu_{12}$.
\end{proof}

\subsection{Anomalous and linearized theories.}
The projective~$\Spin$ invariance of a simple~$X$ allows 
a description of~$\cT_X$ as an 
\emph{anomalous} TQFT for $\Spin$ manifolds, a structure we recall in~\S\ref{anomalybasics}. 
This anomalous presentation may be traded back for a $p_1$-related tangential structure plus 
a specified transformation law. Let $\alpha_{\mu(X)}$ (or simply $\alpha_X$) denote the 
$4$-dimensional invertible TQFT, defined on manifolds with $\Spin_3$ structure, valued in 
the spectrum $\Sigma^4\bI_{\bC^\times}$, and characterized by the closed manifold invariant 
$M\mapsto \mu(X)^{p_1(M)/4}$. The \emph{reduced central charge} $\underline{c}:= 
\frac{6}{2\pi i}\log \mu \pmod{6\bZ}$ could make $\alpha_X$ more familiar: 
\[
\mu(X)^{p_1/4} = \exp\left(2\pi i \underline{c}(X) \cdot \frac{p_1}{24}\right).
\] 
The following Theorem and Remark summarize the anomaly/tangential trade; its proof is contained in 
the discussion in~\S\ref{anomalybasics}--\ref{rels} below. 

\begin{maintheorem}\label{thm4}
The object $X\in\mrE S\fus$ defines an \emph{anomalous $3$-dimensional $\Spin$ theory} $\alpha\!\cT_X$, 
a boundary theory for the anomaly theory $\alpha_X$. 
We can linearize $\alpha\!\cT_X$ as follows:
\begin{enumerate}\itemsep0ex
\item $\alpha\!\cT_X$ is linearizable in two ways over $\Spin^{p_1/4}_3$-manifolds (framed) such that one 
step in $p_1/4$-structure changes $3$-morphisms by a factor of $\mu(X)$. 
\item After a choice $\mu(X)^{1/2}$, the theory $\alpha\!\cT_X$  is linearizable in two ways over 
$\Spin^{p_1/2}_3$-manifolds (stably framed) such that one step in $p_1/2$ structure changes $3$-morphisms 
by a factor of $\mu(X)^{1/2}$. 
\item After a choice of $\mu(X)^{1/4}$, the theory $\alpha\!\cT_X$  is linearizable in two ways over  
$\Spin^{p_1}_3$-manifolds such that one step in $p_1$ structure changes $3$-morphisms by a factor of 
$\mu(X)^{1/4}$.
\end{enumerate} 
In (iii), the anomaly theory $\alpha_X$ factors uniquely through oriented $4$-manifolds.
\end{maintheorem}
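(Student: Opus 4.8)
The plan is to run the standard ``framing anomaly'' mechanism for the fully dualizable object $X$ and then trade the resulting four-dimensional invertible theory for the tangential structures $\Spin^{rp_1}(3)$. By the Cobordism Hypothesis $\cT_X$ is the framed theory with point value $X$, and the change-of-framing action of $\mathrm{O}(3)$ restricts to an action of $\Spin(3)$ on $X$ inside the $\infty$-groupoid $\iota_{>0}\mrE S\fus$ of $3$-dualizable objects. Since $B\Spin(3)$ is $3$-connected with $\pi_4=\bZ$, this action is classified, in the range relevant to a $3$-dimensional theory, by a single class in $H^4\!\big(B\Spin(3);\pi_3\mathrm{Aut}(X)\big)$; for simple $X$ one has $\pi_3\mathrm{Aut}(X)=\bC^\times$ because all top automorphisms are scalar, and by Theorem~\ref{thm3} that class is $\mu(X)^{p_1/4}$, using that $p_1/4$ generates $H^4(B\Spin(3);\bZ)$. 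I would then invoke the relative form of the Cobordism Hypothesis --- the dictionary between a non-trivializable $G$-action on a point value and an invertible theory one dimension up, recalled in \S\ref{anomalybasics} --- to present $\cT_X$ as the boundary theory of the four-dimensional invertible $\Spin(3)$-theory carrying this class. That theory is exactly $\alpha_X$: valued in $\Sigma^4\bI_{\bC^\times}$, with partition function $M\mapsto\mu(X)^{p_1(M)/4}$. (The target is $\Sigma^4\bI_{\bC^\times}$ rather than $\Sigma^4H\bC^\times$ because it must carry the boundary condition living in $\mrE S\fus$; the anomaly class itself is the image of $\mu(X)^{p_1/4}$ under the canonical map $H\bC^\times\to\bI_{\bC^\times}$.)

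\emph{The linearizations.} On a $\Spin^{p_1/4}(3)$-manifold a trivialization of $p_1/4$ is part of the structure, so $\alpha_X$ is canonically trivialized and the boundary datum relative to a trivialized bulk is an honest linear theory $\cT_X^{p_1/4}$. For $\Spin^{p_1/2}(3)$ (resp.\ $\Spin^{p_1}(3)$) only $p_1/2=2(p_1/4)$ (resp.\ $p_1=4(p_1/4)$) is trivialized, so one first rewrites $\alpha_X$ using the class $p_1/2$ and the character $n\mapsto\mu(X)^{n/2}$ (resp.\ $p_1$ and $n\mapsto\mu(X)^{n/4}$), which is legitimate precisely after a choice of $\mu(X)^{1/2}$ (resp.\ $\mu(X)^{1/4}$), and then trivializes using the structure. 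In each case the factor by which a unit shift of the $rp_1$-trivialization acts on $3$-morphisms is read off from $\alpha_X$ and the normalization $p_1=4$ on the basic spherical class of $B\SO(3)$ (Appendix~\ref{tangents}): one step in $p_1/4$ is four steps in $p_1$, yielding $\mu(X)$, $\mu(X)^{1/2}$, $\mu(X)^{1/4}$ respectively, and lower morphisms are unchanged because $\alpha_X$ is concentrated in degree~$4$.

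\emph{The remaining assertions.} Once the $rp_1$-shift behaviour is fixed, the residual choice in trivializing $\alpha_X$ is acted on by the invertible framed $3$-dimensional theories $\chi$ with $\mu(\chi)=1$; by Proposition~\ref{kermu} the homomorphism $\mu$ on this group of units has kernel $\bZ/2$, generated by the order-two theory $\nu$ of \S\ref{alphaomega} (the unique element of order~$2$ in $\bmu_{24}\subset\widetilde{SW}$). Since $\mu(\nu)=1$, $\nu$ is $\Spin(3)$-invariant, descends to every $\Spin^{rp_1}(3)$-level, and is invisible to the shift behaviour, so there are precisely the two linearizations $\cT_X^{rp_1}$ and $\cT_X^{rp_1}\otimes\nu$. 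For the last sentence of the theorem, the rewritten anomaly in (iii) is pulled back along the universal $p_1\in H^4(B\SO;\bZ)$ from the invertible oriented $4$-theory $M\mapsto(\mu(X)^{1/4})^{p_1(M)}=(\mu(X)^{1/4})^{3\sigma(M)}$; this factorization through oriented $4$-manifolds is unique because $\Omega_4^{\SO}=\bZ$ is detected by the signature and $p_1$ spans the free part of the relevant Madsen--Tillmann cohomology.

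\emph{The main obstacle.} The hard part is the first step: one must justify the relative Cobordism Hypothesis in the form used --- that the obstruction to $\Spin(3)$-equivariance of the $3$-dualizable object $X$ is recorded precisely by a four-dimensional invertible $\Spin(3)$-theory of which $\cT_X$ is a boundary theory --- and must control the automorphism $3$-group $\mathrm{Aut}(X)$ of a simple object $X\in\mrE S\fus$ (through the structure of $\mrE S\fus$ from \S\ref{fermion} and the simplicity of $X$) tightly enough to know that the $\Spin(3)$-action lives only in top degree, so that $\alpha_X$ is the clean class $\mu(X)^{p_1/4}$ with no lower-dimensional tail. With that secured, the anomaly/tangential trade, the bookkeeping of the three normalizations, and the identification of the residual $\bZ/2$ with $\nu$ via Proposition~\ref{kermu} are routine.
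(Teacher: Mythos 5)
Your proposal is correct and follows essentially the same route the paper takes: Theorem~\ref{thm4} is stated there as a summary of the anomaly/tangential trade, with the anomaly class $\mu(X)^{p_1/4}$ supplied by Theorem~\ref{thm3}, the boundary-theory formalism by \S\ref{anomalybasics}, the identification of the residual two-fold ambiguity with $\langle\nu\rangle$ by Proposition~\ref{kermu} and \S\ref{alphaomega}, and the bordism bookkeeping (including the splittings $\pi_0 MT\Spin^{rp_1}(3)\cong \pi_3M\Spin^{rp_1}\oplus\pi_0MT\Spin(3)$ that make your ``kernel of $\mu$'' count legitimate for all three structures) by Appendix~\ref{tangents}. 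Your treatment of the uniqueness of the oriented factorization in (iii) is brief, but it matches the paper's own level of detail on that point.
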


\begin{remark}
The trade carries cost (if~$X$ is forgotten), because of the automorphisms of~$\alpha_X$. 
These can be described in terms of the invertible theories~$\spit, \omega$, and~$\psi$ described 
in~\S\ref{alphaomega} below. For instance, attempting to reconstruct~$\cT_X$ from~$\alpha\cT_X$ 
loses the distinction between~$X$ and~$XU^{\otimes 3}$, even though the two define distinct framed theories. 
More generally, 
\begin{enumerate}\itemsep0ex
\item The two choices for $\cT_X$ in (i-iii) differ by a factor of~$\spit$.
\item In (ii), tensoring with $\omega$ flips the choice of square root of $\mu(X)$.
\item In (iii), we can cycle through choices of fourth root by powers of $\ferm^{\otimes 12}$. 
\end{enumerate}
The structures (ii-iii) involve lifting~$\underline{c}$ modulo~$12$ and~$24$, respectively.
\end{remark}
  
\subsection{Refresher on anomalous theories.}
\label{anomalybasics}
Anomalous TQFTs may be described as boundary theories for an invertible theory in one higher 
dimension; see for instance~\cite{ft2, fta}.
Invertible TQFTs map into the spectrum of units of the target category, and thus factor through 
\emph{stable} maps from the monoidal group completions of the (tangentially appropriate) bordism 
categories, the \emph{Madsen-Tillmann spectra}. Standard convention identifies the group-completions with the 
suspensions of~$MT$ spectra which start in degree~$0$, which accounts for some awkward degree shifts 
in our paper. When the units in the target category form 
(the connective cover of) the spectrum~$\Sigma^4 \bI_{\bC^\times}$, stable maps with that target are 
classified by the Pontrjagin dual of $\pi_4$ of the source. The invertible TQFT is then determined 
by the numerical invariants of the TQFT on closed manifolds. We refer to~\cite{fh} for the comprehensive 
account of these ideas. Our anomaly theory $\alpha_X$ is defined, in first instance, on $4$-manifolds 
with $\Spin_3$ structure, where $p_1/4$ is an integral class.

The natural home (target $4$-category) to use  for $\alpha_X$ is the \emph{delooping}~$B\mrE S\fus$, 
the symmetric monoidal category with a single, unit object\footnote{To give this a semblance of 
respectability, we can also include direct sums of copies of $\mrE S\fus$.}~$\mrE S\fus$. 
Its group of invertibles is precisely the 
connective cover of~$\Sigma^4 \bI_{\bC^\times}$. The point generator for~$\alpha_X$ is the unit 
object, making it trivial as a \emph{framed} theory. The anomalous theory is~$X$ itself, as 
a morphism from $\mrE S\fus$ to itself, but with the source and target carrying  different 
$\Spin_3$-invariance data: the standard one, and a $p_1$-twist. 

\begin{remark}
This delooping target works for any anomaly theory, but is somewhat unsatisfactory: 
we wish to land in (an enhancement of) $S\braid$, a universal target for $4$-dimensional TQFTs defined 
from suitable algebra objects in $\mrE S\fus$. There, the point generator of $\alpha_X$ is represented by 
the algebra object $\mathrm{End}(X)$ in $\mrE S\fus$; its anomaly for $\Spin_3$-invariance is naturally 
cancelled. The object $X$ then defines a boundary theory $\alpha\!\cT_X$ for $\alpha_X$ --- the standard module 
for its own endomorphism algebra.
\end{remark}

\subsection{The invertibles $\spit,\omega$ and $\ferm$.} 
\label{alphaomega}
Promoting $3$-framed theories to $p_1$-tangential structures meets some invertible ambiguities: 
\begin{enumerate}\itemsep0ex
\item The \emph{$3$-framed} theory $\spit$ is determined by its value $\spit(pt)= U^{\otimes 3}\in \mrE \fus$, 
the unit of order $2$. However, it factors uniquely through $\Spin_3$ manifolds, since the 
sign character of~$\pi_3^s$ factors through the natural map
\[
\pi_3^s\to \pi_0 MT\Spin_3\cong \bZ/2. 
\]
Alternatively, $\mu(U^{\otimes 3}) =1$, enforcing $\Spin_3$-invariance. See~\S\ref{alpha} below for a 
geometric description of the associated manifold invariant.  

\item \emph{Stably framed} $3$-manifolds carry an invertible order-two theory $\omega$, whose manifold 
invariant integrates the difference between two trivializations of $w_4$: from $3$-dimensionality, and from the 
stable framing.
 
Unlike $\spit$, the theory $\omega$ is trivial on \emph{$3$-framed} manifolds, where the two cancellations 
of $w_4$ agree. In particular, $\omega(pt) = \vect^\otimes\in \fus$. On the other hand, $\omega$ detects a unit shift 
in $p_1/2$ structure,  whereas the lift of $\spit$ to $\Spin^{p_1/2}_3$-manifolds is insensitive to that shift.

\item  We define the invertible \emph{free fermion theory}~$\ferm$ on $\Spin^{p_1}_3$-manifolds 
using the  invariant
\begin{equation}\label{psidef}
\pi_0MT\Spin^{p_1}_3 \cong \pi_3M\Spin^{p_1}\oplus \pi_0MT\Spin_3  \to \pi_3M\Spin^{p_1} \cong \bZ/48 
\rightarrowtail \bC^\times,
\end{equation}
coming from the first projection followed by the standard character. We will further extend~$\ferm$ to~$\ferm_\bC$ 
on $\bC p_1$-structures in~\S\ref{Qp1}, using the analogous splitting of $\pi_0MT\Spin^{\bC p_1}_3$. 

On framed manifolds, $\psi$ restricts to define the standard character $\pi_3^s\to \bC^\times$. Thus, $\ferm(pt)\in \mrE S\fus $ is the invertible object sitting over the 
generator of $\ker(\widetilde{SW} \to SW)$, and $U = \psi(pt)^ {\otimes 4}$. 
\end{enumerate}
\subsection{Relations.}\label{rels}
The theories $\spit, \omega,\psi$ and $\cT_U$ are related as follows:
\begin{itemize}\itemsep0ex
\item $\omega =\spit\otimes \ferm^{\otimes 12}$ on stably framed manifolds. \\
This formula also extends $\omega$ to $\Spin^{p_1}_3$-manifolds, and to $\bC p_1$-structures via $\ferm_\bC$, 
but then it no longer has order $2$. 
\item $\cT_U = \spit\otimes \ferm^{-8}$ on $\Spin^{p_1}_3$-manifolds, where we define $\cT_U$ 
on $\SO^{p_1}_3$-manifolds by the standard character $\pi_0MT\SO^{p_1}_3=\bZ/6\to \bC^\times$, 
and then lift to $\Spin_3^{p_1}$.   \\
As before, this formula extends $\cT_U$ to $\bC p_1$-structures, but it no longer has order $6$. 
\end{itemize}
Upon extension to $\Spin^{p_1}_3$-structures, the $\!\!\mod{6}$ reduced central charges $\underline{c}$ 
refine $\!\!\mod{24}$, reflecting the behavior under a one-unit shift of $p_1$-structure. 
Further extension to $\bC p_1$-structures (cf.~\S\ref{Qp1}) lifts the central charges to $\bC$. 
$\Spin$ invariance of $\spit$ and the relations above tell us that 
\[
c(\spit) = 0, \quad c(\ferm)= \frac{1}{2},\quad 
c(\omega) =6,\quad {c}(\cT_U) =-4\pmod{24}.
\] 

\subsection{Caution.} \label{caution}
We stress that the central charge~$c$ is defined in terms of coupling to (multiples of)~$p_1$, 
not to (re)framings. If this is not tracked correctly, the kernel $\{\pm1\}$ of~$\mu$ (Proposition~\ref
{kermu}) creates conflicts. For instance, the relation $U =\psi(pt)^{\otimes 4}\in \mrE S\fus$ may 
suggest that $c(\cT_U) =2$. Of course, $2=-4 \pmod{6}$, matching the answers on $3$-framed theories, 
but trouble comes from assuming that unit shifts in $\pi_3^s$ and in $p_1/2$ have the same effect on 
stable framings. The theory~$\spit$ breaks that link, and the maps in Proposition~\ref {spinmaps} 
show that $\cT_U \neq \psi^{\otimes 4}$ as $\Spin^{p_1}_3$-theories. Section~\ref{ffermion} shows 
other apparent inconsistencies around~$\ferm$, if its domain is not tracked carefully.

\subsection{The $\Spin_3$-manifold invariant.} \label{alpha}
As an invertible theory, $\spit$ is uniquely determined by its $3$-manifold invariant, detected on 
$\pi_0MT\Spin_3\cong\bZ/2$ as follows. For a closed, $\Spin$ $3$-manifold~$N$, choose a trivialization 
of the cocycle~$p_1/4$. Also choose a $\Spin$ $4$-manifold $M$ with $\partial{M} = N$, exploiting the 
vanishing of $\pi_3M\Spin$. The cocycle~$p_1/2$ on $M$ has been trivialized on~$\partial M$, so 
$\int_M p_1/2$ is an integer. A change in the boundary trivialization of~$p_1/4$ shifts the integral 
by an even number, so  $\spit(N):= \int_M p_1/2\mod{2}$ is well-defined. It vanishes if we can find 
a (stable) $3$-dimensional reduction of~$TM$, because $p_1/4$ is then an integral class. On the 
other hand, $\spit\neq 0$ on the $3$-sphere with Lie group framing.

\subsection{Loss of information.} As already mentioned, the anomalous version $\alpha\!\cT_X$ 
is less precise than $\cT_X$, since $\alpha_X$ can be trivialized more often than first meets the eye. Thus,  

\begin{proposition}
When $\underline{c}(X) \in\frac{1}{2}\bZ$, $\alpha\!\cT_X$ can be linearized to a $\Spin$ TQFT in two ways, 
differing by~$\spit$. 
\end{proposition}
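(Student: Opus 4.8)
The plan is to reduce the statement to a triviality criterion for the invertible anomaly theory $\alpha_X$ together with a count of its trivializations, and then to match that count with $\spit$.

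First I would use that $\alpha_X$ depends on $X$ only through $\mu(X)=\exp(2\pi i\,\underline c(X)/6)$ and, being invertible and valued in $\Sigma^4\bI_{\bC^\times}$, is completely determined by its partition function $M\mapsto\mu(X)^{p_1(M)/4}$ on closed $4$-manifolds carrying the relevant ($p_1/4$-integral) structure; this is the classification recalled in \S\ref{anomalybasics}. So $\alpha_X$ is trivializable precisely when that partition function is identically $1$. The arithmetic input is that on these $4$-manifolds $p_1/4$ takes exactly the values in $12\bZ$: they carry $\Spin$ structures, so $p_1=3\sigma$ with $\sigma\in16\bZ$ by Rokhlin, whence $p_1/4\in12\bZ$, and $p_1/4=12$ is realized. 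Equivalently, this is the computation underlying Proposition~\ref{kermu}, where the differential $d_4$ implementing $\mu$ has image $\bmu_{12}$. Hence $\alpha_X$ is trivializable iff $\mu(X)^{12}=1$, i.e.\ iff $\exp(4\pi i\,\underline c(X))=1$, i.e.\ iff $\underline c(X)\in\frac12\bZ$ --- exactly the hypothesis. Given a trivialization $t$, coupling the boundary theory $\alpha\!\cT_X$ to $t$ yields an absolute $\Spin$ TQFT $\cT_X^{t}$ linearizing $\alpha\!\cT_X$.

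It remains to show there are exactly two such linearizations, differing by $\spit$. Two trivializations of $\alpha_X$ differ by an invertible $3$-dimensional theory over the relevant $\Spin$ structure, valued in the units $\mathrm{GL}_1(\mrE S\fus)=(\Sigma^3\bI_{\bC^\times})_{>0}$, and I would show the group of these is the order-two group generated by $\spit$. For this I would invoke the description of $\spit=\cT_{U^{\otimes 3}}$ from \S\ref{alphaomega}: it is the order-two unit theory, and its invariant on a $3$-manifold is the nontrivial character factoring through $\pi_3^s\twoheadrightarrow\pi_0 MT\Spin(3)\cong\bZ/2$, while the remaining invertible framed theories (powers of $\ferm$) are genuinely $p_1$-sensitive and do not descend to honest $\Spin$ $3$-manifolds (Theorem~\ref{thm3}). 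So the two linearizations are $\cT_X^{t}$ and $\cT_X^{t}\otimes\spit$; this is consistent with the remark after Theorem~\ref{thm4} that $\alpha\!\cT_X$ already fails to distinguish $X$ from $XU^{\otimes3}$.

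I expect the last step to be the main obstacle: pinning the ambiguity group down to \emph{exactly} $\bZ/2$ and identifying its generator with $\spit$, as opposed to a larger subgroup of $(\pi_3^s)^\vee$ or the trivial group. This is where the phenomena flagged in \S\ref{caution} --- the kernel $\{\pm1\}$ of $\mu$, the cokernel of $J\colon\pi_3\SO(3)\to\pi_3^s$, and the fact that a unit shift in $\pi_3^s$ and a unit shift in $p_1$-structure act differently on stable framings --- must be tracked carefully, so that precisely the structure-independent order-two theory $\spit$ survives the passage from (stably) framed to honest $\Spin$ $3$-manifolds.
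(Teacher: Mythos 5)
Your proposal is correct and follows essentially the same route as the paper: Rokhlin's theorem forces $p_1/4\in 12\bZ$ on closed $\Spin$ $4$-manifolds, so $\alpha_X$ is trivializable exactly when $\mu(X)\in\bmu_{12}$, i.e.\ when $\underline{c}(X)\in\tfrac12\bZ$, and the trivializations form a torsor over $\langle\spit\rangle$. Your final step --- identifying the ambiguity group with the Pontrjagin dual of $\pi_0\,MT\Spin(3)\cong\bZ/2$, generated by $\spit$ --- supplies a justification that the paper's one-line proof leaves implicit, so it is a welcome addition rather than a deviation.
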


\begin{proof}
When $\mu(X)\in \bmu_{12}$, Rohlin's theorem on the divisibility of the signature ensures the vanishing
of~$\mu(X)^{p_1(M)/4}$ on closed $\Spin$ $4$-manifolds, so that $\alpha_{\mu(X)}$ is trivializable; the 
trivializations form a torsor over $\langle\spit\rangle$. 
\end{proof}
\noindent
When $\underline{c}$ is \emph{half}-integral, there is no preferred choice; 
but a preferred one for integral $\underline{c}$ is obtained by squaring either choice of trivialization 
for $\underline{c}/2$.


\section{Spin invariance of fusion super-categories.}
\label{spinv}
We now show that the invariant $\mu$ factors through the (centrally extended) super-Witt group. This settles a 
conjecture of \cite{dsps}. 

\begin{maintheorem} \label{mudef}
The action of $\Spin_3$ on $\mrE S\fus$ factors through a group homomorphism $\mu: \widetilde{SW} \to \bC^\times$.
In particular, a fusion super-category $F\in S\fus$ has $\mu\equiv 1$, and 
carries unique $\Spin_3$-invariance data.
\end{maintheorem}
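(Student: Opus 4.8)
The plan is to reduce the factorization statement to the single assertion that $\mu\equiv 1$ on the identity summand $(S\fus)_{T(1)}=S\fus$, and then to extract that assertion from Walker's Crane--Yetter presentation. I will use freely that $\mu$ is multiplicative in its argument (recorded after Theorem~\ref{thm3}) and that $\mu(S\vect^{\otimes})=1$: indeed $\mathbf 1\otimes\mathbf 1\cong\mathbf 1$ forces $\mu(\mathbf 1)=\mu(\mathbf 1)^2$, and $\mu(\mathbf 1)\in\bC^\times$.

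First I would do the bookkeeping, granting $\mu\equiv 1$ on $S\fus$. For a simple $X$ in a summand $(S\fus)_{T(w)}$ the dual $X^\vee$ is again simple and lies in the inverse module $(S\fus)_{T(w^{-1})}$, since duality is a symmetric monoidal anti-equivalence. Hence for simple $X,Y$ in the \emph{same} summand $(S\fus)_{T(w)}$ the products $X\boxtimes X^\vee$ and $Y\boxtimes X^\vee$ land in $(S\fus)_{T(1)}=S\fus$; every simple summand of these has $\mu=1$, so by multiplicativity $\mu(X)\mu(X^\vee)=1=\mu(Y)\mu(X^\vee)$, whence $\mu(X)=\mu(Y)$. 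Thus $\mu$ is constant on each summand and descends to a function $\mu\colon\widetilde{SW}\to\bC^\times$; applying multiplicativity to $X\boxtimes X'$ with $X\in(S\fus)_{T(w)}$, $X'\in(S\fus)_{T(w')}$ (which lies in $(S\fus)_{T(ww')}$) gives $\mu(ww')=\mu(w)\mu(w')$, and $\mu(1)=\mu(S\vect^{\otimes})=1$, so $\mu$ is a group homomorphism. For the final clause, once $\mu(F)=1$ the $\Spin(3)$-action on $F$ is trivial — only $\pi_3\Spin(3)=\bZ$ can act, $\mrE S\fus$ being a $3$-category, and it acts through $\mu$ by Theorem~\ref{thm3} — and the $\Spin(3)$-invariance data (trivializations of that action) form a torsor whose structure space is a mapping space out of the $3$-connected space $B\Spin(3)$ into the truncated automorphism space of $F$, hence a point: the invariance datum is unique.

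The main obstacle is the remaining input: $\mu(F)=1$ for every simple fusion super-category $F$ — precisely the \cite{dsps} conjecture in the $S\fus$ case. Here I would use Walker's presentation: $\cT_F$ is the fully local boundary theory of the Crane--Yetter $4$-dimensional invertible theory built from the Drinfeld center $Z(F)=\mathrm{End}(\mathrm{Id}_F)$, and that Crane--Yetter theory \emph{is} the anomaly theory $\alpha_F$ of Theorem~\ref{thm4}. Because $Z(F)$ is a Drinfeld center, $\alpha_F$ is trivialized \emph{fully locally} — the Crane--Yetter line is trivialized already at the level of points by the regular module $F$ over $F$ — so $\cT_F$ is genuinely an oriented Turaev--Viro theory and therefore $\mu(F)=1$ (equivalently, by Remark~\ref{redc}, $\underline c(F)$ is the reduced central charge of a Turaev--Viro theory). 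I expect the delicate point to be that this is genuinely stronger than the classical input: the Gauss sum of a Drinfeld center being a positive real number only yields $c(Z(F))\equiv 0\bmod 8$, whereas trivializing $\mu$ exactly requires the fully local structure. As a cautionary check, the identity ``$\alpha_X=CY(Z(X))$'' is \emph{false} for general $X\in\mrE S\fus$ — the free fermion $\ferm$ has $Z(\ferm)\simeq S\vect$ yet $\mu(\ferm)\neq 1$ by Proposition~\ref{kermu} — so the argument must really use that a genuine fusion super-category carries its regular module as the datum trivializing Crane--Yetter, a feature the new objects of $\mrE S\fus$ over the non-trivial part of $\widetilde{SW}$ lack. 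The passage from fusion categories to fusion super-categories adds nothing new here: it is the same $\Spin$-bounding argument, applied now to Drinfeld centers of fusion super-categories.
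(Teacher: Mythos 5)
Your reduction of the general statement to the single claim $\mu(F)=1$ for $F\in S\fus$ is correct and is exactly the paper's own reduction (the remark after the theorem: $[X]=[Y]$ iff $X\boxtimes Y^\vee\in S\fus$, plus multiplicativity of $\mu$); the uniqueness clause via the $3$-connectivity of $B\Spin(3)$ against the $2$-truncated $\mathrm{Aut}_{\mrE S\fus}(F)$ is also fine. The gap is in the one step that carries all the content. Your argument for $\mu(F)=1$ is: the regular module trivializes the Crane--Yetter theory of $Z(F)$ fully locally, ``so $\cT_F$ is genuinely an oriented Turaev--Viro theory and therefore $\mu(F)=1$.'' That inference is not available. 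First, orientedness is strictly stronger than what is true: a fusion super-category with $\cliff(1)$-lines is not even pivotal, and $\SO(3)$-invariance requires the spherical/modular analysis of \S\ref{orient}--\S\ref{canorient}; only $\Spin(3)$-invariance holds in general. Second, and more importantly, Witt-triviality of $Z(F)$ (equivalently, full local trivializability of Crane--Yetter) does not by itself control the change-of-framing action: $\mu$ is defined by the $\mathrm{O}(3)$-action that the Cobordism Hypothesis puts on $\iota_{>0}\mrE S\fus$, and the statement ``$F$ is connected to the unit by the regular module, hence $\mu(F)=\mu(S\vect^\otimes)=1$'' is precisely the naive argument the paper flags as invalid, because $\Spin(3)$ does not \emph{a priori} act on non-invertible $1$-morphisms such as ${}_FF$, so invariance cannot simply be transported along it. Your own counterexample ($\ferm(pt)$ has trivial center but $\mu\neq 1$) shows that some genuine property of the regular module must be \emph{proved}, not just invoked; you correctly locate where the content lives but then treat it as a known consequence of Walker's presentation, which it is not --- it is the \cite{dsps} conjecture itself.

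What is missing is the paper's actual mechanism: use the Hopf fibration $\Spin(2)/\Omega S^2\sim\Spin(3)$ to read $\mu$ off from a $\Spin(2)$-action, which \emph{does} extend to $\iota_{>1}\mrE S\fus$ and hence to $\mathrm{Hom}(S\vect^\otimes,F)$, and then show that ${}_FF$ carries an $\Omega S^2$-invariance structure compatible with the one on $F$. Concretely this comes down to Theorem~\ref{spinboundary}: the square of the relative Serre automorphism $S_M^2$ of the regular module admits a trivialization compatible with the $\Spin(3)$-enforced (Radford) trivialization of $S_F^2$, proved by the geometric framing-defect argument computing $\mathrm{Hom}_{Z(F)}(\mathbf{1},W)\cong\bC$ on a linking disk. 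Without supplying this (or an equivalent statement about the quadruple dual on the regular module), the proof is incomplete at its central step.
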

  
\begin{remark}
The condition $[X]=[Y]\in \widetilde{SW}$ is equivalent to $X\boxtimes Y^\vee \in S\fus$, 
so the special case is  equivalent to the general statement. 
\end{remark}

\begin{proof}
This would be straightforward, if the $\Spin_3$-action extended to all $1$-morphisms in $\mrE S\fus$: 
indeed, $\pi_3$ would act trivially on $1$-morphisms, due to their categorical cutoff, and a 
fusion super-category $F$ is related to the unit $S\vect^\otimes$ by the regular module ${}_FF$, forcing 
the equality of projective obstructions:  $\mu(F)= \mu(S\vect^\otimes) =1$. 

While $\Spin_3$ need not \emph{a priori} act on the collection of \emph{all} $1$-morphisms 
in $S\fus$, we will use the (shifted) Hopf fibration $\Omega S^2 \rightarrowtail \Spin_2 
\twoheadrightarrow \Spin_3$ to read off~$\mu$ from $\Spin_2$, which \emph{does} act on $\iota_{>1}\mrE S\fus$. 
Every fusion super-category~$F$ is naturally invariant under~$\Omega S^2$, because the 
action of that group on~$\iota_{>0}\mrE S\fus$ factors through the trivial map to $\Spin_3$. 
Thus,~$\Omega S^2$ acts on $\mathrm{Hom}_{S\fus}(S\vect^\otimes; F)$. We claim that the regular 
module~${}_FF$ therein carries a natural $\Omega S^2$-invariance structure. This extends the action 
of~$\Spin_2$ to~$\Spin_3$, and completes the argument. 

For our claim, it suffices to trivialize the action of~$\pi_1\Omega S^2$ on~${}_FF$, 
compatibly with its natural trivialization on~$F$: there are no further obstructions to a section 
through~${}_FF$ over $S^2 = B\Omega S^2$. Now, $\pi_1\Omega S^2$ acts on $F$ and ${}_FF$ by their squared 
\emph{Serre automorphisms}. Invariance is the content of the following addition to Theorem~\ref{mudef}; 
this extends the action of~$\Spin_3$ to $1$-morphisms in $\mrE S\fus$ (albeit not in a way compatible with composition). 
\end{proof}

\begin{theorem}\label{spinboundary}
For $F\in S\fus$, every $F$-module $M\in \cats$ admits a trivialization of the square of 
its Serre automorphism~$S_M$ relative to $F$, compatible with the $\Spin_3$-enforced trivialization of~$S^2_F$ on~$F$. 
\end{theorem}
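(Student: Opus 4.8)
The plan is to reduce the statement about an arbitrary $F$-module $M\in\cats$ to the case of the regular module ${}_FF$, and then handle that case by an explicit construction of the trivializing homotopy, compatible with the one coming from $\Spin(3)$-invariance of $F$ itself. The key structural fact I would exploit is that the Serre automorphism is the universal structural automorphism attached to $1$-morphisms in a $3$-category with duals: for a fusion (super-)category $F$ viewed as an object of $S\fus$, and an $F$-module $M$ regarded as a $1$-morphism $S\vect^\otimes\to F$ (equivalently, $F\to S\vect^\otimes$ on the other side), its Serre automorphism $S_M$ relative to $F$ is computed by the double dual / Radford-type formula, i.e.\ by transporting the canonical pivotal data of $F$ through the module structure. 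So the first step is to make precise, using the dualizability results of \cite{bjs, bjss, dsps}, that $S_M$ is natural in $M$ and that $S^2_M$ as an $F$-relative automorphism is the image under the $F$-action functor of the squared Serre automorphism of $F$ acting on $\mathrm{End}(\mathrm{Id}_F)$; concretely, $S^2_M$ is ``multiplication by the central element $S^2_F\in Z(F)$.''

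Second, I would recall from the proof of Theorem~\ref{mudef} that the $\Spin(3)$-enforced trivialization of $S^2_F$ arises precisely because $\mu(F)=1$: the relevant obstruction to a section through the groupoid of units dies, and this produces a canonical path from $S^2_F$ to $\mathrm{Id}$ \emph{as a braided automorphism of $Z(F)$} — what matters is that $S^2_F$ is trivialized as the monodromy of the identity $1$-morphism, equivalently as an element of $\pi_1$ of the space of $F$-$F$-bimodule autoequivalences. Third, and this is where the actual work sits, I would show that this trivialization of the \emph{center-valued} class $S^2_F$ transports along \emph{any} module $M$ to a trivialization of the $F$-relative automorphism $S^2_M$. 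The point is that the $F$-action on $M$ is a (balanced) module structure, so it intertwines the Serre automorphisms: applying the action $2$-functor $Z(F)=\mathrm{End}(\mathrm{Id}_F)\to \mathrm{End}_F(M)$ (a priori a functor, hence preserving paths/homotopies) to the chosen path $S^2_F\rightsquigarrow \mathrm{Id}$ yields the desired path $S^2_M\rightsquigarrow\mathrm{Id}_M$, and naturality of the whole package in $M$ gives compatibility with the case $M=F$, which by construction reproduces the $\Spin(3)$-trivialization.

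\textbf{Where the obstacle lies.} The main obstacle is the bookkeeping of \emph{compatibility}, not existence: a bare trivialization of $S^2_M$ always exists once $S^2_M$ lifts to a class that is known to vanish, but the theorem demands that the trivialization on ${}_FF$ agree \emph{on the nose} with the $\Spin(3)$-enforced one, and that the whole assignment $M\mapsto (\text{trivialization of }S^2_M)$ be coherent (it is used in the proof of Theorem~\ref{mudef} to build an $\Omega S^2$-invariance structure on ${}_FF$ via ``no further obstructions over $S^2=B\Omega S^2$''). So I would be careful to phrase the construction functorially from the start — working in the groupoid $\iota_{>1}\mrE S\fus$ where $\Spin(2)$, hence $\pi_1\Omega S^2 = \pi_1 S^3$ acting via squared Serre automorphisms, genuinely acts — and to verify that the natural transformation realizing $S^2_M\simeq\mathrm{Id}$ is obtained by restriction/pushforward of a \emph{single} natural transformation defined on the identity bimodule. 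A secondary subtlety is the super/Clifford case: since $F$ may contain $\cliff(1)$-lines and need not be pivotal (Remark after the fermionic Definition), I must check that ``$S^2_F\in Z(F)$'' and its trivialization still make sense — here one uses that $S^2$ (as opposed to $S$) is always well-defined on the center and that $\mu(F)=1$ is exactly the statement that it is canonically trivial; the argument above then goes through verbatim with $Z(F)$ the super-center.
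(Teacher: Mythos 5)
Your overall architecture — reduce to the regular module by Morita equivalence, then treat $M={}_FF$ — matches the paper's, and your instinct that the difficulty is compatibility rather than bare existence is right. But the central step of your argument has a genuine gap, and it sits exactly where you locate the "obstacle." You propose to transport the $\Spin(3)$-enforced path $S_F^2\rightsquigarrow\mathrm{Id}_F$ through the action functor $Z(F)=\mathrm{End}(\mathrm{Id}_F)\to\mathrm{End}_F(M)$ and conclude. This begs the question. First, there is a type confusion: $S_F^2$ is a $1$-automorphism of $F$ as an object of $S\fus$ (an invertible $F$-$F$-bimodule), not an element of $\mathrm{End}(\mathrm{Id}_F)$; it only becomes identified with (double-braiding by) a central element \emph{after} a trivialization $S_F^2\cong\mathrm{Id}_F$ has been chosen, and \emph{which} central element you get depends on that choice. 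Two trivializations differ by braiding with an invertible $z\in Z(F)$, and — as the paper's remark preceding the proof states — the actual content of the theorem is that for the $\Spin(3)$-enforced trivialization this $z$ maps to $\mathbf{1}\in F$. Your "functoriality preserves paths" step would, if valid, show that \emph{every} trivialization of $S_F^2$ descends compatibly to every module; that is false (compare Theorem~\ref{canonical}.v, where the analogous descent for sheared orientations holds only when $z\mapsto\mathbf{1}\in\mathrm{End}_F(M)$). The relative Serre automorphism $S_M^2$ is intrinsic to the dualizability data of $M$ and is not literally the whiskering of $S_F^2$ with $M$, so there is no functor through which the bulk path automatically pushes forward to a trivialization of $S_M^2$.

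What the paper actually does at this point is a geometric TQFT computation that your proposal has no substitute for: the $\Spin(3)$-trivialization is realized as an ending defect $\partial S_F^2$ of the squared Serre interface (a framing dipole singularity resolved by a chosen contracting homotopy in $\Spin(3)$), and the key fact is that on a linking circle this defect is the \emph{transparent} object $\mathbf{1}\in Z(F)$. The space of compatible boundary terminations of $S_M^2$ is then the value of $\cT_F$ on a disk with $M$-boundary and doubly twisted framing, which the elbow cobordism identifies with
\[
\mathrm{Hom}_{Z(F)}(\mathbf{1},W)\cong\mathrm{Hom}_F(\mathbf{1},\mathbf{1})\cong\bC,
\qquad W=\iota^*(\mathbf{1}),
\]
and any non-zero vector is forced to be an isomorphism $\mathrm{Id}_M\cong S_M^2$ because both are invertible, hence simple, in $\mathrm{End}_F(M)$. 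To repair your argument you would need to prove the statement you currently assume: that the $\Spin(3)$-enforced identification $S_F^2\cong\mathrm{Id}_F$ agrees with Radford's up to a central $z$ with $\iota(z)=\mathbf{1}$ (equivalently, that $\partial S_F^2$ links transparently). Your closing remarks about the super/Clifford case and about phrasing everything in $\iota_{>1}\mrE S\fus$ are sensible, but they do not touch this missing step.
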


\begin{remark}
Underlying the theorem is a (potentially stronger) property internal to~$F$, which indeed is what we prove. 
The square of the Serre functor on~$F\in S\fus$ is the quadruple dual, identifiable with~$\mathrm{Id}_F$ 
\emph{as a tensor functor} using Radford's isomorphism~\cite {dsps}. Any other identification 
$S_F^2 \cong \mathrm{Id}_F$ as automorphisms of $F\in S\fus$ differs from Radford's by braiding 
with a central element~$z\in Z(F)$. The content of the theorem is that~$z$ maps to $\mathbf{1}\in F$ 
for the identification enforced 
by the projective $\Spin_3$-invariance of~$F$ (via $\pi_1\SO_3 = \bZ/2$). The same identification, 
acting on the regular module~${}_FF$, then gives our compatible trivialization of $S_F^2$ there. 
The case of other modules reduces to the regular one by Morita equivalences.
\end{remark}

\begin{proof}
View the Serre functor~$S_F$ of~$F$ as a co-oriented self-interface for~$F$ in a chosen ambient 
framing, performing a full framing twist upon crossing the interface. This is not a genuine 
framing defect, as it can be spread out into the bulk by deformation; but treating it as 
such, relative to the chosen framing, facilitates the argument. The framing jump across this 
interface can be ended in a genuine framing singularity, described by a tangent vector along 
the supporting line plus the radial framing in the normal directions. Our TQFT functor~$\cT_F$ 
is \emph{undefined} on this framing defect. (We will only need to evaluate it when 
investigating~$\SO_3$-invariance later in the paper.)

The squared defect~$S_F^2$ is also endable in a framing defect, now with a double-twist (dipole) 
singularity in the normal directions. This double twist is implemented by a big circle in 
$\Spin_3$. This is now trivializable by a contracting homotopy, which we choose once and for all. 
(The normal bundle to the line is framed by the Serre interface. Without the latter, the defect 
trivialization would be ambiguous on a closed loop, because the space of contracting homotopies is a 
torsor over~$\Omega^2 S^3$.) This defines an ending 
defect~$\partial S^2_F$ for~$S_F^2$ in~$\cT_F$, which on any linking circle is isomorphic to the 
unit object $\mathbf{1}\in Z(F)$, the \emph{transparent defect}. 

On the regular boundary theory (which we keep denoting~$M$ for notational clarity), the bulk Serre 
interfaces~$S_F, S^2_F$ can be terminated in interfaces~$S_M$ and~$S^2_M$, implementing a (now tangential) 
surface Serre framing twist and its square. Trivializing~$S^2_M$ compatibly with $\partial S_F^2$ 
means ending  it in an invertible defect, which will be a boundary endpoint of $\partial S_F^2$, as 
in Figure~\ref{bdrys2}: 

\begin{figure}[h]
\centering
\includegraphics[width=5in]{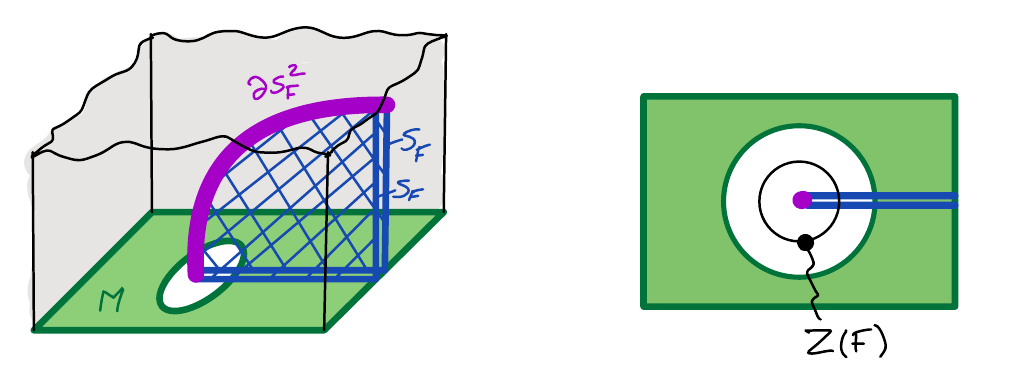} 
\caption{
The end $\partial S^2_F$ of the bulk squared-Serre reaching the boundary  
and its value in $Z(F)$}
\label{bdrys2}
\end{figure}
\noindent
We cannot end the defect \emph{geometrically}: the framing must \emph{a priori} be tangent to the 
boundary, which precludes a trivialization of the double twist. 

We therefore switch to algebraic calculus. The end~$\partial S^2_F$ of~$S^2_F$ 
produces the object~$\mathbf{1}\in Z(F)$ in a linking circle, so our desired ending defect lives 
in the $\cT_F$-space produced by the clean disk with $M$-boundary and doubly twisted boundary 
$2$-framing. We evaluate this by using the standard elbow of a solid cylinder with $M$-boundary, 
which identifies this with the dual output for the same picture, but now with the blackboard framing. 
If the fusion category~$F$ is simple with simple unit --- which we may arrange, for purposes 
of this theorem, by Morita equivalences and direct sum decompositions --- then this picture 
computes the vector space 
\[
\bC \cong \mathrm{Hom}_{Z(F)}(\mathbf{1}, W) = \mathrm{Hom}_{F}(\mathbf{1}, \mathbf{1})
\]
with the `Wilson object'~$W= \iota^*(\mathbf{1})$ in~$Z(F)$ (where $\iota:Z(F)\to F$ is the 
natural morphism). Any non-zero vector therein must define an isomorphism between $\mathrm{Id}_M$ 
and~$S_M^2$, as both  are invertible, and therefore simple objects in $\mathrm{End}_F(M)$.  
This gives the sought-after trivialization of $S_M^2$. 
\end{proof}


\section{Orientability and modular structures}
\label{orient}

When~$X\in \mrE\fus$ is supplemented by a modular tensor structure on $\cT_X(S^1_b)$ (for the circle with  
bounding $3$-framing), we can forgo the $\Spin$ structure and factor $\cT_X$ through 
\emph{oriented}~$p_1$-manifolds. Now, the anomaly theory $\alpha_X$ of~\S\ref{spinomaly} has~$6$ 
trivializations as an $\SO^{p_1}_3$-theory, related by the powers of~$U$. This ambiguity in 
reconstructing $\cT_X$ from Walker's anomalous model is removed by remembering the point generator~$X$.  

\begin{maintheorem}
\label{thm5}
For $X\in\mrE\fus$, a modular structure on the braided fusion category $T:=\cT_X(S^1_b) $ descends $\cT_X$ uniquely to oriented 
manifolds with $p_1$-structure. Thereby, $\mu(X)$ acquires a preferred fourth root. 
\end{maintheorem}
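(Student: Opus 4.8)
The plan is to treat the descent as a lifting problem for the tangential structure --- from $3$-framings (equivalently, by Theorem~\ref{thm4}(iii), from $\Spin^{p_1}(3)$-structures) to $\SO^{p_1}(3)$-structures --- and to identify the modular structure on $T:=\cT_X(S^1_b)$ with precisely the datum that solves it. By the Cobordism Hypothesis this amounts to promoting the point generator $X$ to an $\SO^{p_1}(3)$-homotopy-fixed point in $\iota_{>0}\mrE\fus$, which I would build by obstruction theory over the skeleta of $B\SO^{p_1}(3)$, starting from the $\Spin^{p_1}(3)$-structure already in hand. Since $X$ is simple, its automorphism $3$-group is pinned down by Remark~\ref{fakefusion} --- invertible $T$-modules in degree $0$, the invertible objects of $T$ in degree $1$, scalars at the top --- and $B\SO^{p_1}(3)$ differs from $B\Spin^{p_1}(3)$, through the relevant range, only in the class $w_2$ and, one degree higher, in the behaviour of $p_1$. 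So the entire obstruction to the descent is one class: the action of $\pi_1\SO(3)=\bZ/2$ on $X$, which is the square $S_X^2$ of the Serre automorphism (as in Theorem~\ref{spinboundary}), together with the requirement that the $p_1$-trivialization be coherent with the scalar $\mu(X)$ governing the $\pi_3\SO(3)$-part.

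For the main step I would show that a modular structure on $T$ supplies exactly these data. Non-degeneracy of the braiding places us in the factorizable situation of Remark~\ref{fakefusion}, where $\mathrm{End}(\mathrm{Id}_X)=T$ as an algebra object of $\fus$ and $X\boxtimes X^\vee\equiv T$, so that the $\SO(3)$-action on $X$ is pulled back along this identification from the canonical $\SO(3)$-action on the invertible object $T\in\braid$. On the latter the square of the braiding \emph{is} the structural Serre automorphism; the ribbon of the modular structure trivializes it, and the symmetry condition~(b) --- the homogeneous quadratic enhancement --- is what promotes this trivialization from a $\bZ$-datum to the $\bZ/2=\pi_1\SO(3)$-fixed-point datum we need, all higher obstructions vanishing because the target has no homotopy in degrees $1$ and $2$. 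This kills the $w_2$-obstruction. The remaining $p_1$-coherence is the statement that the anomaly theory $\alpha_X=\mu(X)^{p_1/4}$ of Theorem~\ref{thm4} extends over oriented $4$-manifolds, where $p_1$ ranges over $3\bZ$; via Walker's presentation, the modular $T$ is precisely the input exhibiting $\alpha_X$ as the restriction of an \emph{oriented} invertible Crane--Yetter-type theory, and this pins a fourth root $\mu(X)^{1/4}$ (equivalently, the reduced central charge becomes defined $\bmod\,24$, with its residue $\bmod\,4$ read off from the Gauss sum of~$T$).

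For uniqueness I would argue that any two $\SO^{p_1}(3)$-fixed-point structures on $X$ extending the given one differ by a class in $H^{\le 3}(B\SO^{p_1}(3))$ with coefficients in $\mathrm{Aut}(X)$; simple-connectedness of $B\SO^{p_1}(3)$ and the concentration of the homotopy of $\mathrm{Aut}(X)$ leave only the class detecting Walker's six trivializations of $\alpha_X$ as an $\SO^{p_1}(3)$-theory --- the torsor under powers of $\cT_U$ recalled before the statement --- and remembering the point generator $X$ rigidifies this, just as it rigidifies the Morita model $X\boxtimes X^\vee\equiv T$. Finally, pulling the resulting $\SO^{p_1}(3)$-theory back along $B\Spin^{p_1}(3)\to B\SO^{p_1}(3)$ produces a distinguished $\Spin^{p_1}(3)$-linearization of $\cT_X$ among those of Theorem~\ref{thm4}(iii), which form a torsor over the fourth roots of $\mu(X)$; the chosen element is the ``preferred fourth root''.

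The main obstacle I anticipate is the heart of the second paragraph: checking that the ribbon together with the symmetry condition matches the $\pi_1\SO(3)$-fixed-point datum at exactly the right level of homotopy coherence, and that non-degeneracy is precisely what licenses transporting the whole $\SO(3)$-action from $X\in\mrE\fus$ to the invertible object $T\in\braid$. Concretely this means controlling the torsor of trivializations of $S_X^2$ --- a torsor under central invertible elements of $T$ of order two, whose self-braiding sign is the genuine obstruction to $\SO(3)$-invariance --- and confirming that a modular structure selects the correct point of it while simultaneously fixing the fourth root of $\mu(X)$.
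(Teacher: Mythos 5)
Your proposal follows essentially the same route as the paper: descent is recast as an $\SO^{p_1}(3)$-homotopy-fixed-point (group-splitting) problem for the point generator $X$, attacked layer by layer against the Postnikov truncation of the group, with the $\pi_1\SO(3)=\bZ/2$ obstruction identified as the squared Serre automorphism, i.e.\ the squared braiding, trivialized by a balancing, and the ribbon/symmetry condition supplying the required order-two refinement; non-degeneracy enters exactly as you say, to realize the residual tensor automorphism of $\mathrm{Id}_T$ as a double braiding with an invertible $t$, whose class mod squares is the obstruction.

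One sentence of yours is false and contradicts your own closing paragraph: the claim that ``all higher obstructions vanish because the target has no homotopy in degrees $1$ and $2$.'' By \eqref{htpyX}, $\mathrm{Aut}_{\,\mrE\fus}(X)$ has $\pi_1=[\mathrm{GL}_1T]$ and $\pi_2=\bC^\times$, and these two layers carry the substance of the proof: the degree-$1$ layer is the extension \eqref{groupext3} of $\pi_1G=\bZ/2$ by $[\mathrm{GL}_1T]$, whose class is $t$ modulo squares and whose splitting is precisely the ribbon condition $\rho\equiv1$; the degree-$2$ layer is the extension \eqref{groupext4}, whose four splittings are the four fourth roots of $\mu(X)$, and uniqueness of the final splitting rests on $H^3(BG;\bC^\times)=H^4(BG;\bC^\times)=0$ together with the factorization of the $k$-invariant of $BG$ through the Pontryagin square --- not on any vanishing of homotopy. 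Relatedly, your selection of the preferred fourth root by demanding that $\alpha_X=\mu(X)^{p_1/4}$ extend over oriented $4$-manifolds is only a heuristic (Theorem~\ref{thm4}(iv) already grants that factorization for every choice of $\mu(X)^{1/4}$); the paper pins the root down internally, as the unique splitting of \eqref{groupext4} that also splits the full extension of $G$ by $B^2\bC^\times$. With the quoted sentence deleted and the degree-$1$ and degree-$2$ layers treated as you in fact do elsewhere in the proposal, the argument matches the paper's.
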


\begin{remark}
For $1/2/3$-manifolds, Turaev's construction of TQFTs \emph{with signature structure} is 
a version of this result; an account is also found in \cite{bdsv}. Interpretation requires some care, though: 
Turaev's construction can only be refined to a symmetric monoidal functor between \emph{projectively symmetric} 
monoidal categories (see our upcoming discussion in~\cite{projsym}). A sign ambiguity in that construction 
comes from tensoring with $U$, which has central charge~$(-4)$. 
Knowledge of~$X$ and the use of $p_1$-structures 
refines the central charge mod~$24$.
\end{remark}

\subsection{Preliminaries.} 
When a connected group $G$ acts on a topological space $S$, a choice of base-point $x\in S$ leads to a group extension
\begin{equation}
\label{groupext}
\Omega_x(S) \rightarrowtail \Omega_x(S_G) \twoheadrightarrow G,
\end{equation}
as part of the fibration sequence that continues with 
\begin{equation}
\label{classext}
G\xrightarrow{\ \cdot x\ } S \to S_G \to BG.
\end{equation}
\emph{Invariance data} for~$x$ is a homotopy $G$-fixed point structure: a section of the map 
$S_G\to BG$ through the point $x$. 
This is equivalent to a section of the fibration~\eqref{groupext} \emph{as a group homomorphism}. 

We will produce invariance data for $X\in \mrE\fus$ under the group $\SO^{p_1}_3$ for precisely 
one choice of $\mu(X)^{1/4}$. Recall that $\SO^{p_1}_3$ is the fiber of $\Omega p_1: \SO_3\to K(\bZ;3)$. 
Given the absence in $\mrE\fus$ of $(k>3)$-morphisms, the action factors through the Postnikov 
truncation to the group~$G$ with only
\[
\pi_1G =\bZ/2,\qquad \pi_2 G = \bZ/4,
\]
extended in $BG$ by the \emph{Pontrjagin square} $\wp: K(\bZ/2;2) \to K(\bZ/4;4)$. 

\begin{proof}[Proof of Theorem~\ref{thm5}]
On $S=\iota_{>0}\mrE\fus$, the group~$G$ acts via $\mathrm{O}_3$; with $x = X$ in the above 
discussion and writing~$GA_X$ for $\Omega_x(S_G)$, we get an extension
\begin{equation}
\label{groupext2}
\mathrm{Aut}_{\,\mrE\fus}(X) \rightarrowtail 
	GA_{X} \twoheadrightarrow G,
\end{equation}
and a $G$-fixed point structure on $X$ is a group splitting $G\to GA_X$ of the last map. 

The group $\mathrm{Aut}_{\,\mrE\fus}(X)$ is determined by 
$T=\cT_X(S^1_b)$ (Remark~\ref{fakefusion}) and has 
homotopy groups 
\begin{equation}
\label{htpyX}
\pi_0\, \mathrm{Aut}_{\,\mrE\fus}(X) = [\mathrm{Aut}^{br} T], 
	\quad \pi_1 = [\mathrm{GL}_1 T], 
	\quad \pi_2 = \bC^\times,
\end{equation}
(\emph{isomorphism classes} of braided tensor automorphisms, invertibles, and scalars). The group~$GA_X$ 
is the homotopy quotient
\[
\Omega G \rightarrowtail \mathrm{Aut}_{\,\mrE\fus}(X) \twoheadrightarrow GA_X. 
\]
By construction of $G$, the action of $K(\bZ/4;2)\subset G$ on $\iota_{>0}\mrE\fus$ factors as the (stable) map
\begin{equation} \label{classpi2}
K(\bZ/4;2)\xrightarrow{\ B_4\ } K(\bZ;3) \xrightarrow{\ \mu\ } K(\bC^\times;3),
\end{equation}
ensuring the vanishing of the map 
\[
\bZ/4 = \pi_1\Omega G \to \pi_1 \mathrm{Aut}_{\,\mrE\fus}(X) = [\mathrm{GL}_1 T].
\]
A splitting of~\eqref{groupext2} requires the vanishing of the map $\pi_0\Omega G \to [\mathrm{Aut}^{br} T]$. 
If that is the case, then the homotopy groups of $GA_X$ are 
\[
\pi_0 GA_X = [\mathrm{Aut}^{br} T],
\]
while $\pi_1$ and $\pi_2$ are the group extensions induced by the map $GA_X\to G$ in~\eqref{groupext2}:
\begin{eqnarray}\label{groupext3}
	\left[\mathrm{GL}_1 T\right] \rightarrowtail 
	&\pi_1 GA_X& \twoheadrightarrow \pi_1G 
	= \bZ/2,\\
	\bC^\times \rightarrowtail &\pi_2 GA_X& 
	\twoheadrightarrow \pi_2G = \bZ/4. \label{groupext4}
\end{eqnarray}
The second one, classified by \eqref{classpi2}, has four splittings, matching the four 
choices of~$\mu(X)^{1/4}$. 

A group splitting of \eqref{groupext2} is then equivalent to  
\begin{enumerate}\itemsep0ex
\item the vanishing of the connecting map $\pi_1G \to [\mathrm{Aut}^{br} T] $ in~\eqref{groupext2},
\item a splitting of the consequent group extension~\eqref{groupext3} of $\pi_1G$,
\item a splitting of the resulting extension of $G$ by $B^2\bC^\times$.
\end{enumerate}

Now, once (i) and (ii) have been addressed, (iii) has a unique solution, because
\begin{equation}\label{unique}
H^3(BG;\bC^\times) = H^4(BG;\bC^\times) =0.
\end{equation} 
More specifically: a map $K(\bZ/2;2) \to K(\bC^\times;4)$, classifying a group extension of the base 
$B\bZ/2$ of~$G$ by  $B^2\bC^\times$, factors uniquely through the Pontrjagin square~$\wp$ to~$K(\bZ/4;4)$, the 
$k$-invariant of~$BG$. Every extension of $G$ by $B^2\bC^\times$ is then split by exactly one 
of the four splittings of~\eqref{groupext4}. 
Each choice also decouples the actions of the two Pontrjagin layers of~$G$, after we push 
out~$\pi_2G$ into $B^3\bC^\times$ through $\mu(X)\circ B_4$. We must then only handle 
items~(i) and~(ii) above, and they only concern  $\pi_1G=\pi_1\SO_3=\bZ/2$. 

\subsection*{Vanishing of the connecting map.} We refer to \cite{bk, dsps, hpt, pen} for results 
on braided fusion categories. On $X$, $\pi_1\SO_3$ acts by the Serre automorphism~$\Serre_X$, equipped 
with a trivialization of its square. In $\mathrm{Aut}^{br}T$, this becomes the square of the 
braiding~$\beta$: the identity functor on~$T$, with a braided automorphism of the multiplication.\footnote {Drinfeld's isomorphism of objects with their 
double duals identifies $S$ with the internal double dual functor, the Serre functor on $\cT_X(S^1)$ 
qua fusion category.} 
This functor, and the connecting map in~(i) along with it, is trivialized by a \emph{balancing twist}: an 
automorphism $\theta$ of the identity of the underlying category of $T$, related to the braiding $\beta$ 
by the identity
\[
\theta_{a\otimes b}\circ \left(\theta_a^{-1}
	\otimes \theta_b^{-1}\right) = \beta_{b,a}\beta_{a,b}, 
\quad\forall a,b\in T.
\]  
\subsection*{Splitting the extension~\eqref{groupext3}.}
From $\theta$, we extract a \emph{tensor} automorphism of $\mathrm{Id}_T$:
\[
\rho(\theta): a\mapsto \theta_a\circ (\theta_{a^*}^*)^{-1} \in \mathrm{End}(a), \quad\forall a\in T.
\] 
Non-degeneracy of $\beta$ ensures that $\rho$ is effected by the double-braiding with some 
$t\in \mathrm{GL}_1 T $:
\begin{equation}\label{dblbraid}
\mathrm{db}(t): =(t^{-1}\otimes)\circ\left(\beta_{x,t}\beta_{t,x}\right)\circ(t\otimes):x\xrightarrow{\ \sim\ } x.
\end{equation}
The isomorphism class of $t\in \mathrm{GL}_1T$ modulo squares  represents the extension class in~\eqref 
{groupext3}. If $t$ has a square root $r$, we can kill $\rho$ by composing $\theta$ with 
$\mathrm{db}(r)^{-1}$. 

The \emph{ribbon condition} on $\theta$, the final modularity constraint for $T=\cT_X(S^1_b)$, 
is precisely $\rho\equiv 1$, so such a $\theta$ provides the splitting required in (ii).    
\end{proof}

\begin{remark}
The complement $\pi_1GA_X\setminus \left[\mathrm{GL}_1 T\right]$ can be identified with the set of 
balancings: a torsor over $\left[\mathrm{GL}_1 T\right]$ under composition with $W$. 
Its addition law into $\left[\mathrm{GL}_1 T\right]$ is 
\[
\theta + \eta = t,\quad\text{defined by}\quad \mathrm{db}(t)(a) =  \theta_a \circ 
\left[(\eta_{a^*})^*\right]^{-1},\forall a\in T.
\]
We will meet this structure in the next section. 
\end{remark}


\section{Spherical structures and canonical orientations}
\label{canorient}

\subsection{Synopsis.}
Call $X\in \mrE\fus$ \emph{$p_1$-orientable} if its center admits a modular structure, which we also call 
a $p_1$-orientation on $X$. For a $p_1$-orientable~$X$, we now describe a \emph {canonical} $p_1$-orientation 
(Definition~\ref{defcan}) with optimal properties. It is the unique splitting of~\eqref{groupext3} in 
which the lifted~$\pi_1G$ is central in~$T$. Denoting the resulting $\SO^{p_1}_3$-theory by~$\cT_X^1$,  
we will show: 
\begin{enumerate}\itemsep0ex
\item[(a)] Line operators in $\cT^1_X$ do not require $\Spin$ structures along their support;\footnote{They 
still require \emph{normal} framing information.}
\item[(b)]Interfaces between $\cT^1_X, \cT^1_Y$ can be made $\SO_2$-invariant: in particular, a non-zero 
morphism between simple objects forces their central charges to agree $\bmod{\:24}$; 
\item[(c)] When $X=F$ is a fusion category and $Y=\vect^{\otimes}$, the central charge of $\cT_F^1$ 
vanishes~$\bmod{\:24}$, and $\cT^1_F$, along with all its boundary theories, are definable on oriented manifolds.
\end{enumerate}
All other $p_1$-orientations arise from the canonical one by shearing a canonical splitting of~\eqref{groupext3} by 
elements $z\in [\mathrm{GL}_1T]$ of order~$2$. Thus oriented, we denote the theory by~$\cT_X^z$.  
This need not satisfy~(a-c) above: in fact, part~(a) characterizes the canonical $p_1$-orientation. 
When~$X$ is a $p_1$-orientable fusion category~$F$, the element~$z$ determines the reduced central charge 
of~$\cT_F^z$, namely $0\text{ or }12\mod{24}$, according to the value $\beta_{z,z}\in\{\pm1\}$ of 
its self-braiding. When $\beta_{zzz}=1$, $z$ controls which boundary theories can carry $\SO_2$-invariance 
data; when $\beta_{z,z}=-1$, none of them do.

Orientation-change on a fusion category~$F$ can be executed more geometrically by coupling~$\cT_F$ to 
the four versions of $\bZ/2$-gauge theory, as we will describe in \S\ref{changeorient}. 

\begin{remark}
When $X=F \in \fus$ is $\SO_3$-invariant, and~$M$ is a simple module category, $\SO_2$-invariance 
data for~$M$, if they exist, form a $\bC^\times$-torsor. Having chosen an algebra object $a \in F$ whose 
$F$-internal category of modules is equivalent to $M$, an invariance datum is equivalent to a Frobenius structure 
on $a$, and is determined by a single number, the trace of the algebra unit.     
\end{remark}  

\subsection{Main results.} Here is the classification of orientation structures on our TQFTs 
and compatible structures on boundaries and interfaces; some of the details must await clarification 
in the proof. Recall our notation $\vect^{br,\zeta}_{\bZ/2}$ for 
the four braided versions of $\bZ/2$-graded vector spaces. 

\begin{maintheorem}[Canonical $p_1$-orientation] \label{canonical}
{\ }
\begin{enumerate}\itemsep0ex
\item The $\SO^{p_1}_3$-action on the (full symmetric monoidal sub-)groupoid of $p_1$-orientable 
objects in $\iota_{>0}\mrE \fus$ has a preferred trivialization. The canonical $p_1$-orientation is the constant invariance 
datum. 
\item Given $X$, denote by $T':= \cT_X(S^1_{nb})$ the category for the circle with non-bounding $3$-framing. 
The constant invariance datum for~$X$ defines a braided tensor structure on $T\oplus T'$ and factorizations  
\[
GA_X = \mathrm{Aut}_{\mrE\fus}(X) \times G, \qquad
\cT_X(S^1_b)\oplus \cT_X(S^1_{nb}) \xrightarrow[braided]{\sim} \cT_X(S^1_b)\boxtimes \vect^{br, 1}_{\bZ/2} 
\] 
in which $\pi_1G=\bZ/2$ generates the second factor $\vect^{br,1}_{\bZ/2}$. 
\item A morphism $M\in \mathrm{Hom}_{\mrE \fus }(X; Y)$ between canonically $p_1$-oriented objects $X,Y$  
can be made compatibly $\SO_2$-invariant.
\item When $F\in \fus$ is a fusion category and $z\in GL_1 T$ with $z^2=\mathbf{1}$, the boundary 
theory for $\cT_F^z$ defined by an $F$-module~$M$ can be made $\SO_2$-invariant if and only if 
$z$ maps to $\mathbf{1}$ under $Z(F)\cong Z\left(\mathrm{End}_F(M)\right) \to \mathrm{End}_F(M)$. 
\item In (iv) above, the preferred root $\mu(F)^{1/4}$ of Theorem~\ref{thm5} agrees with 
$\beta_{z,z}\in \{\pm 1\}$, and accordingly, $c\left(\cT_F^z\right)= 0$ or $12\pmod{24}$. In the 
latter case, $\cT_F^z$ admits no $\SO_2$-invariant boundary conditions. 
\end{enumerate}
\end{maintheorem}

\begin{remark}
The core of Theorem~\ref{canonical} is Part~(iv). The proof is somewhat involved, and requires an 
elaboration of Theorem~\ref{thm5}, which we address momentarily in \S\ref{elab5}-\ref{amplify}. Before doing so, 
let us illustrate  the key check needed for relative $p_1$-orientability of $M$, in TQFT pictures (Figure~\ref{boundaryserre}). 
The bulk Serre functor~$S_F$ (blue embedded surface) can terminate on the boundary~$M$ as 
the relative Serre functor (framing twist)~$S_M$. Bulk orientability also allows an internal 
termination of~$S_F$ on the red line. An isomorphism between the two equivalences around the 
base square defines a termination (black dot) of the bulk ending of~$S_F$ (red) on the boundary~$M$, 
ending (and therefore trivializing)~$S_M$.

\begin{figure}[h]
\centering
\includegraphics[width=2.7in]{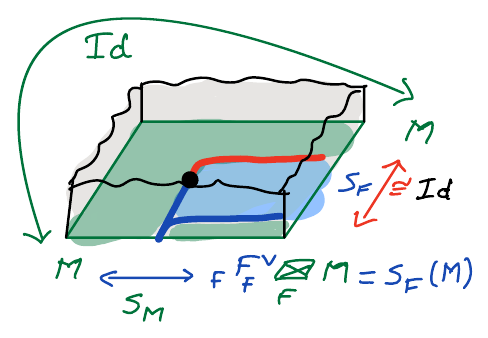} 
\caption{Trivializing the boundary Serre functor $S_M$}
\label{boundaryserre}
\end{figure}
\end{remark}
 
With respect to Part~(iv), when~$z$ maps to $\mathbf{1}\in F$, the resulting orientation on~$F$ and 
its regular boundary gives rise to a \emph{spherical structure} (see for instance \cite{dsps} for 
further discussion):

\begin{theorem}\label{spherical}
Let $F$ be a simple fusion category. The following are equivalent: \begin{itemize}
\item A spherical structure on~$F$;
\item An $\SO_3$-invariance structure on~$F$, compatible with some  $\SO_2$-invariance data 
on its regular module.
\end{itemize} 
\end{theorem}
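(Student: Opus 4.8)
\textbf{Proof plan for Theorem~\ref{spherical}.}
The plan is to identify both structures with the same piece of data on~$F$, using Theorem~\ref{canonical} as the engine. First I would recall, following \cite{dsps}, that a spherical structure on a simple fusion category~$F$ is a pivotal structure (a tensor isomorphism $\mathrm{Id}_F \cong (-)^{\vee\vee}$) whose associated left and right traces agree; equivalently, by Radford's theorem, it is a square root of the Radford isomorphism $S_F^2 \cong \mathrm{Id}_F$ that is compatible with the symmetric structure in the appropriate sense. I would then invoke Theorem~\ref{spinboundary} and its surrounding Remark: the projective $\Spin(3)$-invariance of~$F$ already pins down $S_F^2 \cong \mathrm{Id}_F$ canonically, and the residual ambiguity in upgrading this to an $\SO(3)$-invariance structure (i.e.\ trivializing $S_F$ itself, not just its square, through $\pi_1\SO(3) = \bZ/2$) is exactly a choice of central element $z \in \mathrm{GL}_1 Z(F)$ with $z^2 = \mathbf 1$, by the analysis of \eqref{groupext3} in the proof of Theorem~\ref{thm5}. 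So an $\SO(3)$-invariance structure on~$F$ alone is the same as such a~$z$.

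Next I would bring in the regular module. By Theorem~\ref{canonical}(v), a compatible $\SO(3)$-invariance structure on the regular module~${}_FF$ exists precisely when~$z$ maps to $\mathbf 1 \in \mathrm{End}_F({}_FF) = F^{\mathrm{rev}}$ under $Z(F) \to \mathrm{End}_F(M)$, i.e.\ when~$z$ is (the image of) the monoidal unit, and in that case the invariance datum on~${}_FF$ is unique up to scale. Thus the pair (an $\SO(3)$-invariance structure on~$F$ together with a compatible one on~${}_FF$, up to scale) is the same as: a choice of $z$ with $z^2 = \mathbf 1$ mapping to $\mathbf 1 \in F$ — equivalently, a lift of $\mathbf 1 \in F$ to a central element of order~$2$, compatibly with the canonical $p_1$-orientation. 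It remains to match this set bijectively with spherical structures. For this I would use the standard fact (see \cite{eno, dsps}) that pivotal structures on~$F$ form a torsor over tensor automorphisms of $\mathrm{Id}_F$ of order dividing~$2$, which for a simple~$F$ are the invertible central objects $z$ of order~$2$ (via $W(z)$ as in \eqref{dblbraid}); sphericality cuts this down to those~$z$ whose self-braiding $\beta_{z,z}$ equals the sign dictated by the canonical orientation — and by Theorem~\ref{canonical}(iv), $\beta_{z,z} = +1$ is exactly the condition $c(\cT_F^z) = 0 \bmod 24$, which by (v) is exactly the condition that~$z$ maps to~$\mathbf 1 \in F$. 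Chasing these identifications gives the desired equivalence, and I would note that naturality of all the maps involved upgrades the bijection of sets to an equivalence of the relevant groupoids/torsors.

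The main obstacle I anticipate is bookkeeping the \emph{base points and scalings} correctly: each side is only a torsor (pivotal structures over order-$2$ central automorphisms; $\SO(3)$-invariance data up to scale over the same group), and Theorem~\ref{canonical} supplies a canonical base point on the TQFT side (the constant invariance datum) which I must match with a canonical base point on the categorical side. Getting the canonical spherical structure to correspond to the canonical $p_1$-orientation — rather than to some $z$-shear of it — is where the argument could silently go wrong; I would pin it down by evaluating both on the regular module and using the uniqueness-up-to-scale clause of Theorem~\ref{canonical}(v) together with the "all boundary theories are $\SO$-invariant" property of the canonical orientation stated at the start of~\S\ref{canorient}. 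A secondary point to handle carefully is the translation between the Serre/double-dual language (Radford, $S_F^2$) and the framing-twist language used in the proof of Theorem~\ref{spinboundary}; the footnote identifying~$S$ with the internal double-dual functor is the bridge, but I would want to state the dictionary explicitly rather than lean on it implicitly.
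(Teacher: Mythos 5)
Your overall strategy---exhibit both sides as torsors over the same group of order-two central elements and match base points---is close in spirit to the paper's treatment of the converse direction, but as written it has two genuine gaps. The first is the biconditional at the heart of your matching step: you claim that $c(\cT_F^z)=0\bmod{24}$ (equivalently $\beta_{z,z}=+1$) ``is exactly the condition that $z$ maps to $\mathbf{1}\in F$.'' Theorem~\ref{canonical}(v) gives only one implication: $z\mapsto\mathbf{1}\in\mathrm{End}_F(M)$ forces $c=0\bmod{24}$, not conversely. The paper's own $\theta^m$ variant of $\bZ/2$ gauge theory is a counterexample: there $z=m$, $\langle\mathbf{1},z\rangle=\vect^{br}_{\bZ/2}(1)$ so $\beta_{z,z}=+1$ and $c=0\bmod{24}$, yet $z$ does not map to $\mathbf{1}\in F$ and the regular (Dirichlet) boundary requires a $\Spin$ structure. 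Relatedly, the group indexing spherical structures is not ``central $z$ of order $2$ with $\beta_{z,z}=+1$'' but the order-two elements of $\iota^{-1}(\mathbf{1})$, i.e.\ central lifts of the \emph{unit object}; for such $z$ the condition $\beta_{z,z}=+1$ is automatic and carries no information. So the condition you need throughout is $\iota(z)=\mathbf{1}$, and it cannot be recovered from the central charge.

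The second gap is the base-point correspondence you yourself flag but do not resolve. A torsor identification only yields the theorem if you can produce a spherical structure from an invariance pair (or at least show simultaneous non-emptiness). Half of this is indeed available from the proof of Theorem~\ref{canonical}(v): a trivialization of $S_F$ compatible with one of $S_M$ for $M={}_FF$ forces $u=\mathbf{1}$ in $\mathrm{Id}_F\cong u\otimes(-)^{**}\otimes u^{-1}$, i.e.\ it is a genuine pivotal structure. But \emph{sphericality} of that pivotal structure---the equality of left and right traces---is a further identity, and it is precisely what the paper's geometric argument (Figure~\ref{spherfig}: the two isotopic pictures of an $x$-labeled loop on the regular boundary of a solid ball, rotated using the $\SO(3)$-invariance) establishes. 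Your proposed fix, invoking ``all boundary theories are $\SO$-invariant'' and the uniqueness-up-to-scale clause, does not produce this trace identity; without it, or an algebraic substitute (e.g.\ via the Radford/double-dual dictionary you mention), the forward implication of the theorem is not proved.
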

\noindent
This meshes well with the main theorem of~\cite{ft1}: slightly loosely put, a simple fusion 
category is equivalent to a pair consisting of a simple TQFT $\cT_F$ and a non-zero boundary condition.

\subsection{The extended fusion category $T\oplus T'$.} \label{elab5}
Elements of $[\mathrm{Aut}^{br} T]$ represent isomorphism classes of invertible $T$-modules~\cite
{davnik}. The Serre functor~$S_X$ corresponds to the $T$-module $T' =\cT_X(S^1_{nb})$. The homomorphism 
$\Omega G \to \mathrm{Aut}_{\mrE\fus}(X)$ can be factored through the base~$\bZ/2=\pi_0\Omega G$, 
after decoupling the two Pontrjagin layers as explained following \eqref{unique}. This 
defines a \emph{$\bZ/2$-crossed braided structure} \cite{eno} on the category
\begin{equation}\label{spincat}
T\oplus T' = \cT_X(S^1_b)\oplus \cT_X(S^1_{nb}).
\end{equation}
The $E_2$ group~$\Omega GA_X$ of the previous section is a $K(\bZ/4;2)$-bundle 
over~$\mathrm{GL}_1(T\oplus T')$, 
pulled back from the grading $\Omega GA_X\to \bZ/2$. For instance, when~$\pi_1G$ injects into 
$[\mathrm{Aut}^{br}T]$, the factor~$T'$ is not isomorphic to $T$ as a module and contains no 
invertible objects; we only see $GL_1T$. 

\subsection{Braided tensor structures on $T\oplus T'$.} \label{amplify}
When~$T'\cong T$, \emph{and} if we can split the extension~\eqref{groupext3}, the homomorphism 
$\Omega G \to \mathrm{Aut}_{\mrE\fus}(X)$ may be retracted to the top layer~$B^2\bC^\times$ 
of the codomain. The retracted homomorphism deloops, in four possible ways, to a homomorphism 
$G\to B^3\bC^\times$, matching the choices of~$\mu(X)^{1/4}$ involved in splitting~\eqref 
{groupext4}. Each choice promotes~\eqref{spincat} to a $\bZ/2$-graded braided category~\cite
{davnik}. (This is because $H$-graded braided categories with an abelian~$H$ and 
identity component~$T$ are classified by $E_2$ homomorphisms $H\to \mathrm{Aut}(\mathrm{Id}_T)$, 
and the automorphism group of~$\mathrm{Id}_T$ is $B^2\bC^\times$.) Each outcome 
factors as follows, with the four options for $\zeta^4=1$:
\begin{equation}\label{factorTT}
T\oplus T' \cong T\boxtimes \vect^{br,\zeta}_{\bZ/2}. 
\end{equation}
Each option corresponds to a splitting section of~\eqref{groupext2}. Shearing a given splitting 
by a suitable order-$2$ element of~$\mathrm{GL}_1T$ (one that is even unique up to isomorphism) factors $GA_X$ 
as a product, and $\zeta$ is then the self-braiding of the lifted generator of~$\bZ/2$. 

\begin{definition}\label{defcan}
The \emph{canonical modular structure} on~$T$ and \emph{$p_1$-orientation} of~$X$ are defined by 
factoring $GA_X\subset T\oplus T'$ as in~\eqref {factorTT}, with $\zeta=1$. There results the \emph
{canonical lift}~$\kappa\in T'$ of the generator of~$\bZ/2$. For a $2$-torsion element 
$z\in \mathrm{GL}_1T$, we denote by $\cT_X^z$ the $p_1$-oriented TQFT defined from the sheared 
splitting by $z\kappa$. 
\end{definition}

\begin{remark}
For a general choice of~$z$, $\zeta=\beta_{z,z}$.
\end{remark}

\begin{proof}[Proof of Theorem~\ref{canonical}.i, ii] 
The framing-preserving rotation of $S^1_{nb}$ defines an automorphism~$\theta'$ of the 
identity of~$T'$. This $\theta'$ is a quadratic refinement of the 
braiding $\beta$ of $T$, in that
\[
\theta'(x\otimes y \otimes z) \theta'(x\otimes z)^{-1} \theta'(y\otimes z)^{-1} \theta'(z) = 
\beta_{y,x}\circ\beta_{x,y} \otimes \mathrm{Id}_z, \quad
	\forall x,y\in T, z\in T'
\]
as follows from the lantern relation for Dehn twists. A trivialization of~$S_X$ is a 
choice of~$T$-module isomorphism~$T\cong  T'$, identifying the two $\Spin$ circles, and is effected 
by an invertible object $b\in T'$. This transports~$\theta'$ to a balancing  
on $T$, 
\[
b^*\theta': a\mapsto \theta'(a\otimes b) 
	\circ\left(\mathrm{Id}_a\otimes\theta'(b)^{-1}\right). 
\]
The $\theta'(b)$-correction is to ensure that $b^*\theta' (\mathbf{1}) =1$. Comparing with the 
definition of $\rho$ (end of the proof of Theorem~\ref{thm5}), this is seen to be 
a ribbon iff $b^2\cong\mathbf{1}$. Changing $b$ by a $2$-torsion element~$t\in T$ changes the 
double-braiding action of~$\mathrm{db}(b)$ on~$T$ by~$\mathrm{db}(t)$. Non-degeneracy of~$\beta$ 
on~$T$ ensures that \emph{exactly one} choice~$\kappa$ for~$b$ is central in $T\oplus T'$. The 
resulting canonical structure splits~$\pi_1G$ in $GA_X \subset T\oplus T'$. Any other splitting 
differs from it by some order-two element $z\in GL_1T$.
\end{proof}

\begin{proof}[Proof of Theorem~\ref{canonical}.iii]
Assume that $X,Y$ are simple, and use the folding trick to pass to the fusion category 
$F:=X\boxtimes Y^\vee$, for which $M$ becomes a module: we are then reduced to Part~(iv).
\end{proof}

\begin{proof}[Proof of Theorem~\ref{canonical}.iv]
Our~$M$ is a point in the space $\iota_{>0}\mathrm{Hom}(\vect^\otimes;F)$, which gets promoted to 
the fiber of a bundle over $B\SO^{p_1}_2$ by the $\SO^{p_1}_2$-invariance of~$F$. The $2$-skeleton~$S^2\to 
B\SO^{p_1}_2$ gives an integral cohomology isomorphism through~$H^4$, so the obstruction to a section 
through~$M$ over~$B\SO^{p_1}_2$ is detected over~$S^2$, where we meet the action of~$\pi_1\Omega S^2=\bZ$ 
via the Serre functor~$S_M$ relative to~$F$. (There is no possible extension obstruction as in~\eqref
{groupext3}, since $\pi_1=\bZ$ is free.)   
If~$S_M$ is trivializable compatibly with the isomorphism~$S_F\cong\mathrm{Id}_F$, 
the categorical cutoff of~$M$ precludes a $p_1$-dependence in~$F$, forcing the compatible
$\SO_{3,2}$-invariance of the pair~$(F,M)$.

To trivialize~$S_M$, we may assume that~$F$ and~$M$ are simple. Replacing $F$ by the isomorphic 
object $\mathrm{End}_F(M)\in \fus$ reduces us to the case of the regular module $M ={}_FF$. The Serre 
functor~$S_F$ is identifiable with the double dual $**$ on the fusion category~$F$ (see \cite{dsps}). 
This also identifies the relative Serre functor~$S_M$ with the double dual. A tensor isomorphism 
$\mathrm{Id}_F \cong **$ would give the sought-after  compatible trivialization of $S_M$. 

Now, a trivialization of~$S_F\in \mathrm{Aut}_\fus(F)$ is a tensor isomorphism, for some fixed $u\in F$,
\begin{equation}\label{twistedserre}
\mathrm{Id}_F \cong \large\{ x\mapsto u\otimes  x^{**}
\otimes u^{-1}\large\},
\end{equation}
and a compatible trivialization of $S_M$ is a functorial isomorphism $\{m\xrightarrow{\ \sim\ } u\otimes m^{**}\}$ 
on the~$F$-module~$M$. This exists iff $u=\mathbf{1}\in F$, in which case the trivializations form a 
torsor over the automorphisms $\bC^\times$ of the identity functor of~$M$ (over $F$).

We show first that $u=\mathbf{1}$ for the canonical orientation. To do so, we shift the problem to 
the center~$Z(F)$ by ``squaring'' the pair $(F,M)$ to the category $F\boxtimes F^{op}$ and its regular 
module. This leads to the element $u\boxtimes u^{op}$ in the analogue of \eqref{twistedserre}. A compatible 
boundary orientation exists iff $u\boxtimes u^{op} = \mathbf{1}$, which in turn happens iff $u=\mathbf{1}$  
(since the diagonal map is injective on invertible isomorphism classes in~$F$). 
We will find the compatible orientation by replacing the pair $\left(F\boxtimes F^{op}, F\boxtimes F\right)$ 
in~$\fus$ with the isomorphic pair $\left(Z(F),F\right)$. 

Choosing $\mathbf{1}\in F$ as generating object over $Z(F)$, the module~$F$ gets identified with the 
category of (right) module objects, internal to~$Z(F)$, over the commutative algebra object 
$W:= \iota^* (\mathbf{1})$, where $\iota:Z(F)\to F$ is the natural map \cite[Prop.~8.8.8]{egno}. 
The functor~$\mathrm{Id}_F$ corresponds to the $W\text{-}W$-bimodule~$W$, and~$S_F$ to~$W^{**}$. 
On~$Z(F)$, the bulk Serre functor is identified with the identity by the factorization~\eqref{factorTT}: 
the canonical element~$\kappa$ identifies the bimodule~$T'$ with the identity bimodule~$T$. 
Relative orientability of~$F$ then amounts to the agreement of the Morita equivalence of algebra 
objects $W^{**}\equiv W$, mediated by~$W^*$ in~$Z(F)$ (the bottom edge in Figure~\ref{boundaryserre}),  
with the identification~$W^{**}\cong W$ induced by the canonical orientation of~$Z(F)$ (the right edge). 

Left-dualizing once, this datum is equivalent to an isomorphism of $W\text{-}W$ bimodules 
\[
W \cong (W^* = {}^*W)
\] 
with the natural left-and-right actions of~$W$ on the parenthesized objects and their identification via 
the canonical $** = \mathrm{Id}_{Z(F)}$. 
The obvious maps out of~$W$,
\[
W\cdot \mathbf{1}^* \to W^*,\qquad {}^*\mathbf{1}\cdot W\to {}^*W, 
\] 
are isomorphisms, being non-zero maps between invertible bi-modules. They are adjoint to the 
composition $W\otimes W \to W \to \mathbf{1}$, by transposing the left, respectively right factor. 
The (commutative) multiplication is the first step: this allows us to identify the two via the 
Drinfeld isomorphism in~$Z(F)$. This differs from the isomorphism induced from the canonical orientation 
by the balancing~$\theta=\kappa^*\theta'$, pulled back canonically from~$\theta'$ on~$T'$ (Definition~\ref
{defcan}). 

We claim that $\theta\equiv 1$ on $W$. The geometric proof is contained in Figure~\ref{Winvariance}: 
the object $\kappa\cdot W\in T'$ is the $\cT_F$-output of the annulus~$A$, with its product 
framing and input boundary colored by~$F$. The rotation of~$A$ is implemented in time, by the 
cylinder $A\times [0,1]$. However, the $3$-framing of this manifold is constant in time, so 
the picture is framed-diffeomorphic to the identity.

\begin{figure}[h]
\centering
\includegraphics[width=2.7in]{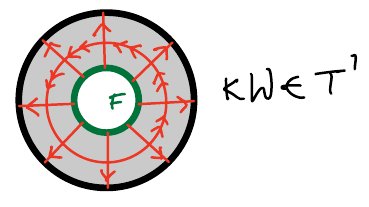} 
\caption{We have $\theta'\equiv 1$ on $\kappa\cdot W$ because of strict rotation-invariance} 
\label{Winvariance}
\end{figure}

This settles the canonical orientation. For a general bulk orientation~$z$,  the isomorphism 
defining our orientation structure,
\[
z\otimes:\mathrm{Hom}_{F\text{-}F}(F;F) \to 
	\mathrm{Hom}_{F\text{-}F}(F;F^\vee), 
\]
changes the identification $W\cong W^{**}$ by $z$-conjugation. We then need to identify the $W\text{-}W$ 
bimodules $W$ and $zW$. This can be done precisely iff $\iota(z)=\mathbf{1}\in F$.
\end{proof}

\begin{proof}[Proof of Theorem~\ref{canonical}.v] 
We know that $\mu(F)=1$ (Theorem~\ref{mudef}). We cannot have $\beta_{z,z}=\pm i$ in the center of~$F$ 
($z$ acts centrally on one of the fusion subcategories~$\vect^\otimes$ or $\vect\oplus \vect z$ of $F$), 
and the statement reduces to the observation following Definition~\ref{defcan}: the value of~$\mu(F)^{1/4}$ 
must match the self-braiding $\pm1$ of $z$. Note also that $\beta_{z,z} =-1$ prevents~$z$ from mapping 
to $\mathbf{1}$ in any fusion category Morita equivalent to $F$.   
\end{proof}

\begin{proof}[Proof of Theorem~\ref{spherical}] 
Figure~\ref{spherfig} depicts the solid ball, to which we apply the theory~$\cT_F$, with regular 
boundary condition and a self-defect boundary loop labeled by an object~$x\in F$. This computes the 
left and right traces of $x$, and their equality for all~$x$ is the sphericality condition, 
\cite[Def.~4.7.14]{egno}. The orientability of~$F$ and its regular module  ensures the 
isomorphism of the two pictures by rotating one of the spheres.

\begin{figure}[h]
\centering
\includegraphics[width=2.7in]{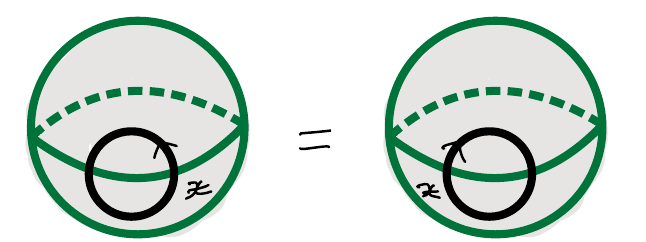} 
\caption{The unlabeled arm of the loop can be read as either~${}^* x$ or~$x^*$.} 
\label{spherfig}
\end{figure}

Conversely, as shown by Turaev \cite{tur} and M\"uger \cite{mug2}, a spherical structure on~$F$ leads 
to a modular structure on $Z(F)$. The canonical orientation then gives~$F$ a preferred pivotal structure. 
Any other pivotal structure differs from it by a tensor automorphism of $\mathrm{Id}_F$: this is realized 
by the double-braiding~$W(z)$ of \eqref{dblbraid} with a unique invertible $z\in \iota^{-1} 
(\mathbf{1})\in Z(F)$. When relating spherical structures, equality of left and right traces must 
be preserved, and this forces~$z$ to have order~$2$. The original spherical structure must then come from 
one of our changes from the canonical $p_1$-orientation.    
\end{proof}

\subsection{Four $p_1$-orientations of $\bZ/2$ gauge theory.}
\label{funnyZ2} 
The fusion category $\vect^\otimes_{\bZ/2}$, with its standard associator, generates the gauge theory 
with group~$\bZ/2$. Here, we discuss the four $p_1$-orientations corresponding to the four simple 
central elements $\{\mathbf{1}, e, m, em \}$ of the center $Z$ of $\vect^\otimes_{\bZ/2}$, with~$m$ 
denoting the generator of~$\bZ/2$ and~$e$ its sign representation. Three of them are genuine orientations, 
while the fourth has central charge $12\bmod{24}$. 

The framed theory has two simple boundary conditions, Neumann and Dirichlet, and the four simple line 
operators from~$Z$. Line operators with $\theta=-1$ need normal framings (here, $\bmod{\:2}$). 
The four possible ribbons are listed in Table~\ref{ribbontable}.

\begin{table}[h]
\centering
\begin{tabular}{c|c|c|c|c|}
& $\mathbf{1}$ & $m$ & $e$ & $em$ \\
\hline
$\theta^{1}$ & $1$ & $1$ & $1$ & $-1$ \\
\hline
$\theta^m$ & $1$ & $1$ & $-1$ & $1$ \\
\hline
$\theta^e$ & $1$ & $-1$ & $1$ & $1$ \\
\hline
$\theta^{em}$ & $1$ & $-1$ & $-1$ & $-1$ \\
\hline
\end{tabular}
\caption{Ribbons on $Z$}
\label{ribbontable}
\end{table}

The four $\bZ/2$-gauge theories are defined by counting the \emph{classical fields} described below, 
summing in a categorical sense when appropriate. The first three are genuinely oriented theories,  
while the fourth one has a sign anomaly for the orientation. Here are their explicit descriptions:
\begin{enumerate}\itemsep0ex
\item $\theta^1$ is the usual bundle-counting theory, and is defined even for unoriented manifolds. Neither 
boundary condition, nor any of the line operators requires a $\Spin$ structure along its support.
\item $\theta^m$ is the twisted version where $\Spin$ structures replace double covers as 
classical fields. The Dirichlet boundary, but not the Neumann one, requires a boundary $\Spin$ structure. 
The line operators $e$ and $em$ require $\Spin$ structures along their support: to define the monodromy 
of a Wilson loop, we need a base $\Spin$ structure to compare with the $\Spin$ field.  
\item $\theta^e$ counts bundles with weight $w_2(M)\cup w_1(m)\in K\left(\bZ/2;3\right)$, with the 
classes~$w_2(M)$ of the manifold and $w_1(m)$ of the bundle. The Neumann condition (but not 
the Dirichlet one) now requires a $\Spin$ structure on $\partial M$, to trivialize this twist. The `t Hooft loop labeled 
by~$m$ needs a longitudinal $\Spin$ structure, as does $em$: the bundle is singular, so $w_1(m)$ is not defined along the loop, 
and $w_2(M)$ must be trivialized there. 
\item $\theta^{em}$ is the $\Spin$-counting theory \emph{relative to the $p_1$-structure} $\tau$, 
$\delta\tau = p_1$. A $\Spin$ structure is a $1$-cochain~$w(m)$ trivializing~$w_2$, so $w_2\cup w(m)$ 
trivializes $p_1 \bmod{2}$. The difference $(\tau-w_2(M)\cup w(m)) \in K\left(\bZ/2; 3\right)$ is 
our counting twist. This theory has no $SO(2)$-invariant boundary conditions. Its central charge is 
$12 \bmod{24}$, since a unit shift in $\tau$ changes the sign of the invariants.
\end{enumerate}

\subsection{Non-canonical orientations on $F$.} \label{changeorient}
These four versions of gauge theory can be grafted onto a theory $\cT_F$ to effect a change in orientation. 
Let $z\in Z(F)$ have order~$2$. We tensor~$F$ over $\langle \mathbf{1},z\rangle$ 
with the four gauge theories, but the precise operation depends on the image of~$z$ in~$F$.

\emph{When $z$ maps to $\mathbf{1}\in F$.} This corresponds to~$\theta^1$ or~$\theta^e$ above. 
Half-braiding with $z$ defines a grading $F=F_0\oplus F_1$ on~$F$, which realizes $F$ as the 
$\bZ/2$-gauging of~$F_0$. We can build~$\cT_F$ by gauging~$F_0$ in the variant structure $\theta^e$, 
adding the $w_2$-weighting when counting bundles.

\emph{When $z$ does not map to $\mathbf{1}\in F$.} This orientation twist has two possible 
outcomes, matching $\theta^m$ or $\theta^{em}$, depending on whether $\langle \mathbf{1},z\rangle$ 
is $\vect_{\bZ/2}^{br,1}$ or~$\vect_{\bZ/2}^{br,-1}= S\vect$. (The $\zeta=\pm i$ braidings cannot occur, 
because $\langle \mathbf{1},z\rangle$ acts centrally on the image subcategory of $F$.) We use $z$ to 
`couple the category $F$ to the $\Spin$ structure' on the manifold where $\cT_F$ is 
to be evaluated. The  TQFT picture realizes~$\cT_F$ as a boundary of the $4$D gerbe theory 
defined by $\langle \mathbf{1},z\rangle$, and builds a sandwich by placing the $\Spin$-counting theory 
on the opposite side. 


\section{Complex $p_1$-structures and central charge} 
\label{Qp1}
We now introduce an enhanced tangential structure, a \emph{complex $p_1$-structure}, 
capable of seeing the lift of $\underline{c}$ to a complex number. 

\begin{definition}
A \emph{$\bC p_1$-structure} on a real vector bundle is a trivialization of its first complexified Pontrjagin class.
\end{definition}

\noindent
The stable group $\Spin^{\bC p_1}$ is a $B^2\bC$-extension of $\Spin$. Restricting dimensions, such as 
in $\Spin^{\bC p_1}_{2,3}$, leads to $\bC p_1$-tangential structures on manifolds. The low 
$\bC p_1$-bordism groups for~$\Spin$ and~$\SO$ in dimensions $2,3,\infty$ are described in Table~\ref
{mytable} of Appendix~\ref{tangents}; most relevant here are
\[\begin{split}
\pi_3 \Sigma^3 MT \Spin^{\bC p_1}_3 &\cong \pi_3 M\Spin^{\bC p_1}\oplus 
	\pi_3 \Sigma^3 MT\Spin_3 \cong \bC/48\bZ \oplus \bZ/2, \\
\pi_3 \Sigma^2 MT\Spin^{\bC p_1}_2 &\cong \pi_3 M\Spin^{\bC p_1} \oplus 
	\pi_3 \Sigma^2 MT\Spin_2 \cong \bC/48\bZ\oplus \bZ/4.
\end{split}\]
A unit shift of $p_1$-structure represents $1 \in \bC/48\bZ$, while 
the generator of $\pi_3^s$ maps to $(2,1)$.
\begin{remark}
The lower groups $\pi_{0,1,2}$ are those of their $0$-cell $\bS^0$, except that $\pi_2 \Sigma^2 MT
\Spin^{\bC p_1}_2$ contains an extra $\bZ$ summand. 
\end{remark}
Their relevance is captured by the following.

\begin{maintheorem}\label{logmu}
\begin{enumerate}\itemsep0ex
\item A choice $\lambda(X)$ of $\log\mu(X)$ extends $\cT_X$ to $\Spin^{\bC p_1}_3$-manifolds, in such a
way that a shift in $\bC p_1$-structure by $4zp_1\in \bC p_1$ scales the manifold invariants 
by $\exp\left(z\lambda(X)\right)$.  
\item A chiral $\Spin$ CFT with central charge $c>0$ can be a boundary theory for the $\Spin^{\bC p_1}_3$ 
theory $\cT_X$ only if $\lambda(X) = 2\pi i c/6$. 
\end{enumerate}
\end{maintheorem}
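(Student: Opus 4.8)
The plan is to treat the two parts separately; part~(i) is essentially formal, while the normalization bookkeeping in part~(ii) is the real work.

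For part~(i), I would start from the anomaly theory $\alpha_X$ of Theorem~\ref{thm4}: the invertible $4$-dimensional $\Spin(3)$-theory, valued in $\Sigma^4\bI_{\bC^\times}$, with closed-manifold invariant $M\mapsto \mu(X)^{p_1(M)/4}=\exp\!\big(2\pi i\,\underline{c}(X)\,p_1(M)/24\big)$. A choice $\lambda(X)\in\bC$ of $\log\mu(X)$ lifts the integer-powered cocycle $\mu(X)^{p_1/4}$ to an honest $\bC$-valued degree-$4$ cocycle $\beta_X:=\lambda(X)\cdot(p_1/4)$, with $\alpha_X=\exp(\beta_X)$. Since $p_1/4$ generates $H^4(B\Spin(3);\bZ)$, trivializing $\beta_X$ amounts to trivializing the complexified class $\bC p_1$; and, by the very definition of a $\bC p_1$-structure, $B\Spin^{\bC p_1}(3)\to B\Spin(3)$ is the homotopy fibre of $\bC p_1$, so $\bC p_1$ --- hence $\beta_X$, hence $\alpha_X$ --- is canonically trivialized on $\Spin^{\bC p_1}(3)$-manifolds. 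Feeding this trivialization into the anomalous theory $\alpha\!\cT_X$ exactly as in the $\Spin^{rp_1}$ linearizations of Theorem~\ref{thm4} produces the absolute $\Spin^{\bC p_1}(3)$-theory $\cT_X$. The dependence on structure is then a routine cocycle count: on a closed connected $\Spin^{\bC p_1}(3)$-manifold the $\bC p_1$-structures refining a fixed $\Spin$ structure form, at the level of $\pi_0$, a $\bC$-torsor via $H^3(\,\cdot\,;\bC)$, normalized so that a ``shift by $z$'' is a shift of the trivialization of $p_1/4$ by $z$; such a shift changes the induced trivialization of $\beta_X=\lambda(X)(p_1/4)$ by $z\,\lambda(X)$, scaling the invariant by $\exp(z\,\lambda(X))$. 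This lifts the $\!\!\bmod 24$ story of \S\ref{spinomaly} to $\bC$ and proves (i).

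For part~(ii), the idea is to compare invertible anomaly theories. A chiral $\Spin$ CFT $\cF$ with central charge $c$ carries a gravitational anomaly realized by a $3$-dimensional invertible theory $A_\cF$; after promotion to $\bC p_1$-structures, $A_\cF$ is the invertible $\Spin^{\bC p_1}(3)$-theory with parameter $\lambda=2\pi i c/6$ in the sense of~(i) --- this is the statement that the Virasoro central charge couples to $p_1$ with the Introduction's normalization (a unit $p_1$-shift scales chiral partition functions by $\exp(2\pi i c/24)$, hence a shift-by-$z$ in our $p_1/4$-normalization by $\exp(2\pi i c z/6)$); see~\cite{ft2}. If $\cF$ is a boundary theory for $\cT_X$ on $\Spin^{\bC p_1}(3)$-manifolds, then $A_\cF$ must agree with the invertible part of $\cT_X$, i.e.\ with the theory built in~(i), so the two $\bC p_1$-scalings coincide: $\exp(z\,\lambda(X))=\exp(2\pi i c z/6)$ for all $z\in\bC$. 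Since $\exp\colon\bC\to\bC^\times$ is a local homeomorphism, equality for all $z$ forces $\lambda(X)=2\pi i c/6$. (Positivity of $c$ is part of the hypothesis on $\cF$; it is not otherwise used.)

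The main obstacle I anticipate is precisely the normalization bookkeeping in part~(ii): pinning down that the gravitational anomaly of a chiral CFT, viewed as a $\bC p_1$-invertible theory, has \emph{exactly} $\lambda=2\pi i c/6$ with the same conventions as our $\lambda(X)$. This demands careful conversion among framings, $p_1$, $p_1/4$ and the accompanying factors of $2\pi i$, $6$ and $24$ as one passes between analytic conformal field theory and topological field theory, together with making precise the sense in which $\cF$ is a ``boundary theory'' for $\cT_X$ --- in particular why this pins the full invertible anomaly theory and not merely its values on closed bordism classes. By contrast, part~(i) should go through with no surprises.
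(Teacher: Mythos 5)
Your part~(i) is correct and is essentially a repackaging of the paper's argument: the paper trivializes the $\Spin^{\bC p_1}(3)$-action on the object $X$ directly (through dimension~$3$ the group is $\Sigma^2H\bC/\bZ$, acting via the Bockstein $\Sigma^2H\bC^\times\to\Sigma^3H\bZ$ followed by $\mu(X)$, and a lift $\lambda$ of $\mu$ through $\exp$ trivializes the composite), whereas you trivialize the anomaly theory $\alpha_X$ and then invoke the anomaly/tangential trade of Theorem~\ref{thm4}. Given that theorem the two routes are interchangeable, and your normalization of the shift in units of $p_1/4$ matches the paper's.

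Part~(ii) has a genuine gap, and it is exactly the one you flag at the end: you assert that the gravitational anomaly of a chiral CFT of central charge $c$, viewed as a $\Spin^{\bC p_1}(3)$-invertible theory, has parameter $2\pi i c/6$, and that a boundary coupling forces this to agree with $\lambda(X)$ --- but both halves of that assertion are the content of the theorem, not inputs to it. The paper's proof (see \S\ref{chiralrecall}) supplies precisely what you leave open. First, the operative meaning of ``boundary theory'' is Segal's modular functor: a functor $\cT_X(S^1)\to\cC\!f$ to the category of Virasoro representations, equivariant for the topological actions of (a quotient of) $\mathrm{Diff}(S^1)$ on both sides; this is what ``pins the full anomaly and not merely its closed-manifold values.'' Second, the normalization: the central extension $\mathrm{Diff}^{p_1/4}(S^1)$ is \emph{constructed} by fiberwise integration of the differential refinement $\check{p}_1/4$ over the universal circle bundle and identified as the $c=6$ extension; after the van Est splitting via $\mathfrak{sl}(2;\bR)$ (which separates conformal weight from central charge), the Virasoro action on the state spaces descends to a topological action of $B\bZ\times B\bZ$ in which the central $B\bZ$ acts by $\exp(2\pi i c/6)$ on $\cC\!f$ and by $\mu(X)$ on $\cT_X(S^1)$. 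Equivariance forces the match, and repeating the argument with all multiples $kp_1$ upgrades the congruences $\lambda(X)\equiv 2\pi ic/6$ to an exact equality in $\bC$. Without this construction your comparison is circular: $\cT_X$ is not invertible, so ``the invertible part of $\cT_X$'' must be interpreted as the transformation law of the coupled functor under $\bC p_1$-shifts, which only makes sense once the $\mathrm{Diff}(S^1)$-equivariance has been set up and the $p_1/4$-extension identified with the $c=6$ Virasoro cocycle.
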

\noindent
While the first statement is self-explanatory, the second may need review: see \S\ref{chiralrecall} below. 
One standard argument for it uses the projective cocycle of the conformal blocks measured on the moduli of 
curves and its relation to the Virasoro algebra. We will give another argument, using just the circle, 
which we think is novel.

\begin{proof}[Proof of Theorem~\ref{logmu}.i] 
An extension of $\cT_X$ to $\Spin^{\bC p_1}_3$-manifolds is equivalent to a trivialization 
of that group action on $X$. Through dimension $3$, $\Spin^{\bC p_1}_3 = B^2(\bC/\bZ)$ 
(with the generator of~$\pi_3$ for integral normalization) 
and it acts via the Bockstein
\[
\Sigma^2H(\bC/\bZ) \xrightarrow{\ B\ } \Sigma^3H\bZ 
	\xrightarrow{\ \mu(X)\ } \Sigma^3H\bC^\times. 
\]
A lift $\lambda:\bZ \to\bC$ of $\mu$ trivializes the composite homomorphism $\mu\circ B$, with 
the stated transformation law under shift of structure.  
\end{proof}

\subsection{Coupling to $2$-dimensional boundary theories.}
\label{chiralrecall}
Chiral CFTs, if not topological themselves, have a non-zero \emph {central charge $c\in \bC$}, manifested 
as a $\bC^\times$-central extension in the action of the diffeomorphism group $\mathrm{Diff}(S^1)$ 
of the circle on the chiral spaces of states.  ($\Spin$ structures may also be needed.) General 
CFTs also carry a \emph{Weyl anomaly} under conformal scalings of the metric, leading to real rescalings 
of their partition functions; we will disregard this aspect, and focus our discussion on the topological 
content of the chiral CFT. This is captured by its coupling, as a boundary theory, to a $3$D TQFT, and 
governed by Segal's \emph{modular functor} axioms \cite{seg}. This requires matching the tangential 
structures. $\Spin$ structures require us to distinguish between the bounding and non-bounding circles~$S^1_b$ 
and~$S^1_n$. The relevant diffeomorphism groups are double covers of~$\mathrm{Diff}(S^1)$, connected and trivial, 
respectively. However, this will turn out to affect the conformal weights in~\eqref{vanest} below, and not the 
central charge; as we focus on the latter, we will ignore that distinction.

\begin{proof}[Proof of Theorem~\ref{logmu}.ii]
A $\bC$-central extension of $\mathrm{Diff}(S^1)$ defines a class in its smooth group cohomology with 
complex coefficients. This is an extension of the cohomology of the Lie algebra $\mathfrak{diff}(S^1)$ 
by the de~Rham cohomology of the classifying space:\footnote{This is a corner of the \emph{van Est} 
spectral sequence.}
\begin{equation}\label{vanest}
\bC = H^2\left(B\mathrm{Diff}(S^1);\bC\right)
	\to H^2\left(B\mathrm{Diff}(S^1);\cO\right) 
	\to H^2\left(\mathfrak{diff}(S^1); \bC\right) = \bC.
\end{equation}
Restriction to $\mathrm{SL}_2(\bR)$ splits the sequence, because
$H^2\left(\mathfrak{sl}_2(\bR);\bC\right)=0$. 
The first component is the \emph{conformal weight} of a 
projective representation of~$\mathrm{Diff}(S^1)$: it measures the failure of the $\mathfrak{sl}_2(\bR)$-enforced 
Lie algebra splitting to extend to the rigid rotation subgroup. The second~$\bC$ component is the \emph{central 
charge}, normalized so that  $\mathfrak{diff}(S^1)$ acts on the free chiral complex spinor field with~$c=1$.

The isomorphism $H^2\left(\mathfrak{diff}(S^1); \bC\right) =\bC$ is a special ($1$-dimensional) case 
of \emph{Gelfand-Fuks cohomology}. Recall from  \cite{bs} that, for a $k$-manifold $M$, that cohomology agrees 
with that of the space of sections of a fiber bundle $GF(M)\twoheadrightarrow M$; the fiber is the truncation of the 
restriction of the universal bundle $E\mathrm{U}_k$ to the Schubert cells of $B\mathrm{U}_k$ 
of complex dimension $\le k$, and $GF(M)$ is associated to the frame bundle of $M$ by the inclusion 
$\mathrm{O}_k \to \mathrm{U}_k$.\footnote{This forcibly identifies  the Chern classes of 
$T_{\bC}M$ with the universal ones over the truncated $B\mathrm{U}_k$.} 

The complexified algebra $\mathfrak{diff}_\bC(S^1)$  is the maximal symmetry applicable to the conformal 
boundary, when a circle is embedded in a conformal germ. Crossing with an interval, a $2$-dimensional 
Gelfand-Fuks symmetry applies to the smooth germ around the circle, at the topological face. However, 
the Virasoro extension in~\eqref{vanest} \emph{does not} restrict from the $2$-dimensional 
Gelfand-Fuks cohomology over the circle: the relevant group is zero, as seen in the model below. 
This seems to prevent the coupling to oriented TQFTs, which accommodates the larger vector field 
symmetries. 

The resolution lies in the $\bC p_1$ restriction of tangential structure. This  
can be differentiably incorporated in the Gelfand-Fuks complex, by adding a \emph{Pontrjagin structure} 
$3$-cochain~$\pi$ to kill the complexified~$p_1$. The redundant annihilation by $\pi +c_1v_1 -2v_2$ 
leaves a surviving $3$-cocycle, whose transgression over~$S^1$ restricts to the Virasoro central extension. 
The cochains $\pi, v_2, c_1^2, c_2$ vanish upon $1$-dimensional restriction, while $c_1v_1$ transgresses 
to a multiple of the Virasoro cocycle. The local $2$-dimensional Gelfand-Fuks local models 
are in the tables below, where $v_{1,2}$ are the generators of $H^*\left(\mathrm{U}_2\right)$:

\begin{figure}[h]
\begin{minipage}{0.45\textwidth}
\centering
$
\xymatrix@1@=10pt{
v_2 \ar@/^/[dddrrrr]+<.6em,1.4ex> & 0 & c_1v_2 & 0 & c_1^2v_2, c_2v_2 \\
0 & 0 & 0 & 0 & 0 \\
v_1\ar[drr] & 0 & c_1v_1\ar[drr] & 0  & c_1^2v_1, c_2v_1\\
1 & 0 & c_1 & 0 & c_1^2,\quad c_2
}
$
\caption{Gelfand-Fuks model space: \newline
$d_2:v_1\mapsto c_1$,\hskip1cm $d_4:v_2\mapsto c_2$}
\label{gfspec1}
\end{minipage}
\begin{minipage}{0.45\textwidth}
\centering
$
\xymatrix@1@=10pt{
\pi,\quad v_2 \ar@{>}@<-1.5ex>[]+<0.4em,0ex>; [dddrrrr] +<0em,4.2ex>\ar@/^1pc/[dddrrrr]+<.6em,1.4ex> & 0 & c_1v_2 & 0 & c_1^2v_2, c_2v_2 \\
0 & 0 & 0 & 0 & 0 \\
v_1\ar[drr] & 0 & c_1v_1\ar[drr]+<-2em,1ex> & 0  & c_1^2v_1, c_2v_1\\
1 & 0 & c_1 & 0 & {\overbracket{c_1^2,\quad c_2}}
}
$
\caption{$\bC p_1$-structured model space: \newline
$d_2:v_1\mapsto c_1, \:\:\: d_4: \pi\mapsto 2c_2 -c_1^2, v_2\mapsto c_2$}

\label{gfspec2}
\end{minipage}
\end{figure}

Coupling the boundary CFT with a TQFT therefore requires matching the multiple of~$\pi$, the topological 
central charge measured in~$\bC p_1$-units, with the Virasoro central charge. This is the content 
of the Theorem, up to scale. Rather than matching cocycles, we can deduce the scale from the known 
$p_1$-variance~$c=1$ of the complex fermion theory. 
\end{proof}

\begin{remark}
In the same argument, general $rp_1$-structures instead of $\bC p_1$ lead to powers 
$\mathrm{Diff}^{\:rp_1}(S^1)$ of our central extension and to the equality $\lambda(X) = c \mod{24r}$.
\end{remark}

\begin{remark}
The condition in Theorem~\ref{logmu}.ii is not sufficient: the unstable summand~$\bZ/4$ 
from $\pi_3\Sigma^2MT\Spin_2$ also needs  matching. We do not know an instance where this 
problem occurs, although we can force a mismatch by tensoring the TQFT with an unstable 
invertible factor.
\end{remark}


\appendix
\section{An exotic gauge theory}
\label{exgauge}
The new objects in $\mrE\fus$ allow the construction of additional $4$-dimensional TQFTs, which do not come 
from the fusion $2$-categories classified in~\cite{f2cat}. This is why we cannot quite rely on the latter 
to construct the one-step categorifications of $\mrE\fus$ and $\mrE S\fus$, the expected targets for 
$4$-dimensional TQFTs. We hope to return to this in a follow-up paper; here, we describe 
the exotic bosonic $4$-dimensional $\bZ/3$ gauge theory made possible by the group of 
units~$\langle U \rangle\subset\mrE \fus$ from~\S\ref {boson}. 

The object $A:=\vect^\otimes\oplus U^{\otimes 2} \oplus U^{\otimes 4} \in \mrE\fus$ has a canonical algebra structure: 
the vanishing group $H^4\big(B\bZ/3; \bC^\times\big)$ precludes any variation of the (higher) 
associator. It generates the $4$-dimensional gauge theory $\cG_{\bZ/3;\delta}$ for the group~$\bZ/3$ 
with the novel Dijkgraaf-Witten twist $\delta$, valued in the (base of the) group of units 
$\mathrm{GL}_1\,\mrE\fus$. The classifying space of the latter has $\pi_4=\bC^\times, \pi_1 =\bZ/6$ 
and $k$-invariant $Sq^2\times P^1_3$; this vanishes at this stage of delooping, and the map 
\[
\delta: B\bZ/3 \to B \mathrm{GL}_1\,\mrE\fus
\] 
is the unique lift of the inclusion map to the base (in light of the~$H^4$ vanishing mentioned.)

It is easy to compute the first Drinfeld center $\cG_{\bZ/3;\delta}(S^1_b)$ as an object in 
$\mrE\fus$: the answer is  
\begin{equation}\label{exotics1}
\left(\vect^\otimes\oplus U^{\otimes 2} \oplus U^{\otimes 4}\right)
	\boxtimes \vect_{\bZ/3},
\end{equation}
the original~$A$ tensored with the generator of $3$-dimensional gauge theory. 

We can now evaluate the reduced theory on $3$-spheres with various framings. The $\vect_{\bZ/3}$ 
factor gives a factor of~$1/3$, from bundle automorphisms, but the $U$-powers in the first factor 
lead to framing-dependent answers: we get $3$, for framings representing multiples of~$3$ in the 
bordism group, and $0$ otherwise. This matches the sum of integrals, with coefficients 
in~$\Sigma^3\bI_{\bC^\times}$, of the transgression of~$\delta$ over~$S^1_b$. 

This TQFT does not match the standard~$\bZ/3$-gauge theory, generated by the algebra object 
$\vect^\otimes\oplus \vect^\otimes\oplus \vect^\otimes \in \fus$ with the $\bZ/3$-convolution, as 
the latter is insensitive to framing; nor can it be built from any other algebra object in $\fus$. 
Indeed, from~\eqref{exotics1}, we compute the M\"uger center $\cT_{\bZ/3;\delta}(S^2)$ to 
be~$\mathrm{Rep}(\bZ/3)$, so any candidate must be some version of $\bZ/3$-gauge theory. 
However, there are no Dijkgraaf-Witten twists that we could use  without the units~$U$. 


\section{Tangential structures involving $p_1$}
\label{tangents}

We review the $p_1$-tangential structures on $3$-manifolds and on boundary surfaces, where 
TQFTs can interface with Conformal Field Theories. The main results are listed in Table~\ref{mytable} 
and the associated Study Guide, and the proofs are given in the text that follows.  

\subsection{Variations.} 
Framing a $3$-manifold defines a $\Spin$ structure. This is preserved by a \emph{local} change 
of framing (one concentrated near a point), because based maps $S^3\to \SO_3$ lift uniquely to $\Spin_3$. 
A $\Spin$ structure on manifold also defines a $3$-framing up to local change, resulting in the close 
relationship between linear TQFTs on framed manifolds\footnote {This does not apply to \emph{families 
of manifolds}, because $\pi_4 \Spin_3$ and the higher homotopy groups do not vanish.}  and anomalous 
TQFTs on $\Spin$ manifolds reviewed in \S\ref{spinomaly}. Note that \emph{$3$-framings} 
and \emph{stable framings} are distinct structures, 
because the inclusion $\SO_3 \subset \SO$ has index~$2$ on~$\pi_3$: there are twice as many 
local changes of stable framing. Extending a framed theory to stably framed manifolds is always 
possible, but meets an ambiguity of order~$2$ (Theorem~\ref {thm4}). Other instances where 
factors of $2$ cause problems, if not tracked correctly, relate $\Spin_3$-structures, $\Spin$ structures 
and orientations. Here, we bring some order with an explicit descriptions of bordism groups 
and maps between them.

\subsection{Tangential structures.}
\label{bordgroups}
An \emph{$n$-dimensional tangential structure $\tau$} is a space  equipped  with a map to~$B\mathrm{O}_n$, 
and a $\tau$-structure on a manifold is a factorization via $\tau$ of the structure map of the tangent bundle. 
Commonly, this represents a reduction of structure group, such as to the trivial group (an $n$-framing), 
the groups $\SO_n,\Spin_n \to \mathrm{O}_n$, but can be defined more generally. Thus, the homotopy 
fiber of the map representing the Pontrjagin class 
\[
p_1: B\SO_n \to \Sigma^4H\bZ
\]
defines an oriented $p_1$-structure, denoted by $\SO^{p_1}_n$. $\Spin^{p_1}_n$-structures 
are defined similarly. \emph{Signature structures}, considered in~\cite{tur} for oriented RT theories, are 
more subtle: see~\cite{projsym}. 

The bordism group of $n$-manifolds with $\tau$-structure is~$\pi_0$ of the respective 
\emph{Madsen-Tillmann spectrum} $MT(\tau)$, the (unreduced) de-suspension of the underlying space 
of~$\tau$ by the standard representation of $\mathrm{O}_n$, \cite{gmtw}. This is the ($n$-fold 
looped) group completion of the respective \emph{bordism category}. The~$MT$ spectrum for $n$-framings is the 
shifted sphere $\bS^{-n}$. 

\subsection{Bordism groups.}
The definitional shift in~$MT_n$ explains the (unnatural) 
indexing of homotopy groups in 
the following table of $3$-dimensional bordism groups: 

\begin{table}[h]
\centering
\def\arraystretch{1.3}
\begin{tabular}{|c|c|c|}
\hline
$\pi_1MT\Spin^{p_1/4}_2 = \pi_3^s\oplus \pi_1^s$&
$\pi_0 MT\Spin^{p_1/4}_3 = \pi_3^s$& 
N/A
\\
\hline
$\pi_1MT\Spin^{p_1/2}_2 = \pi_3^s\oplus \bZ/4$ & 
$\pi_0 MT\Spin^{p_1/2}_3 = \pi_3^s\oplus \bZ/2$ &
$\pi_3 M\Spin^{p_1/2} = \pi_3^s$
\\
\hline
$\pi_1 MT\Spin^{p_1}_2 =\bZ/48\oplus \bZ/4$ &
$\pi_0 MT\Spin^{p_1}_3 = \bZ/48\oplus \bZ/2$ & 
$\pi_3 M\Spin^{p_1} = \bZ/48$
\\
\hline
$\pi_1 MT\Spin^{\bC p_1}_2 = \bC/48\bZ\oplus \bZ/4$ & 
$\pi_0MT\Spin^{\bC p_1}_3 = \bC/48\bZ\oplus\bZ/2$ & 
$\pi_3M\Spin^{\bC p_1} = \bC/48\bZ$
\\
\hline
$\pi_1MT\Spin_2 =\bZ/4$ &
$\pi_0MT\Spin_3 =\bZ/2$ & 
$\pi_3 M\Spin = 0$
\\
\hline
$\pi_1 MT\SO^{p_1}_2 =\bZ/12$ &
$\pi_0 MT\SO^{p_1}_3 =\bZ/6$& 
$\pi_3 M\SO^{p_1} = \bZ/3 $ 
\\
\hline
$\pi_1MT\SO^{\bC p_1}_2 = \bC/12\bZ$ &
$\pi_0MT\SO^{\bC p_1}_3 = \bC/6\bZ$ &
$\pi_3M\SO^{\bC p_1} = \bC/3\bZ$
\\
\hline
\end{tabular}
\caption{Some relevant bordism groups}
\label{mytable}
\end{table}
\subsection{Study Guide.} 
Before describing the most important groups more explicitly and the maps 
between them  in Proposition~\ref{spinmaps}, here are some quick 
pointers:
\begin{enumerate}\itemsep0ex
\item There are natural maps going down and to the right.
\item The horizontal maps are surjective and compatible with the splittings indicated. 
\item For $r\in\{1/2,1,\bC\}$, the $\Spin^{rp_1}_n$-structured groups ($n=2,3$) may be described  uniformly:  
\begin{equation}\label{splitspin3}
\pi_3\Sigma^n MT\Spin^{rp_1}_n = 
	\pi_3 M\Spin^{rp_1} \oplus \pi_3 \Sigma^nMT\Spin_n; 
\end{equation} 
however, the image of the generator of $\pi_3^s$ has $1$ in its second component. 
\item The similar splitting for the $\SO^{p_1}$ groups is misleading: it is not compatible   
with the maps from $\Spin$. In fact, all $\SO^{p_1}$-groups are quotients of $\pi_3^s$, from 
the lowest cell. 
\item As a consequence, in the last line  we find extensions of~$\bC/3\bZ$ instead of splittings.
\item The group $\pi_3^s$ in $M\Spin^{p_1/2}$ comes from the inclusion of $\bS^0$. The presentation 
$\pi_3^s=2\bZ/48$ makes the vertical inclusions in the first four rows natural. 
\item Shifts of $rp_1$-structure\footnote{On $\Spin^{p_1/2}$-manifolds, only even 
shifts of $p_1$-structure are executable, since one `tick' of structure shifts~$p_1$ by~$2$.} 
cycle through the stable summands in the $\Spin$ groups, and through the full $\SO$ groups.
\end{enumerate}

\subsection{Framings and stable framings.} 
On $3$-manifolds, $\Spin^{p_1/4}_3$-structures are the same as~$3$-framings.
Rigidifying the bordism $3$-category above dimension~$3$ by collapsing diffeomorphism groups of 
$3$-manifolds to their component groups identifies the respective structures.    
Now, the map $\pi_3\SO_3\to \pi_3\SO$ has index~$2$. Therefore, stabilizing the $3$-framing on a 
$3$-manifold has the same effect as first killing~$w_2$, and then killing~$p_1/2$ on $\SO_3$. The 
previous argument identifies stably framed and $\Spin^{p_1/2}_3$-structures on the (rigidified) 
bordism $3$-category.

\subsection{Stable groups.} The $4$-cell in $M\Spin$ is attached to the $0$-cell via the generator 
of~$\pi_3^s$: we know this from the vanishing of the bordism group $\pi_3M\Spin$. Supplying $p_1/2$ 
structure kills the $4$-cell, so that $\pi^s_3 \to \pi_3M\Spin^{p_1/2}$ is an isomorphism. More generally, 
the spectra $M\Spin^{rp_1}$ are  extensions (normalized by $p_1$)
\begin{equation}\label{mspinp1}
\begin{aligned}
\bS^0 \rightarrowtail &M\Spin^{rp_1} \twoheadrightarrow \Sigma^3H(\bZ/2r\bZ) \\
\bS^0 \rightarrowtail &M\Spin^{\bC p_1} \twoheadrightarrow \Sigma^3H(\bC/2\bZ) 
\end{aligned}
\qquad (\text{through } \pi_5),
\end{equation}
classified by Bocksteins maps to $\Sigma^4\bZ$ followed by its surjection to the generator of $\pi_4\bS^1$. 
This leads to the extended, topologically cyclic~$\pi_3$ groups in the right 
column of Table~\ref{mytable}. 

\begin{remark}
The cokernel $\bZ/2$ of the map $\pi_3^s\to 
M\Spin^{p_1}$ is detected by the difference between the mod~$2$ reduction of the $p_1$-structure 
and~$Sq^2$ of the $\Spin$ structure, viewed as trivialization of~$w_2$. (The identity $Sq^2w_2 = 
p_1\bmod{2}$ is respected on trivializations induced by a framing, but not for independent 
trivializations.)
\end{remark}

The stable groups~$\SO^{p_1}, \SO^{\bC p_1}$ are determined from the identification 
$p_1/3: \pi_4M\SO\xrightarrow{\ \sim\ } \bZ$, the vanishing of lower homotopy groups and the 
Bockstein argument from before.   

\subsection{Dimension $3$.}
The isomorphism $\pi_0MT\Spin_3\cong \bZ/2$ can be seen from the exact sequences
\[
\xymatrix{
0\ar[r] & \pi_4MT\Spin_3 \ar[r]^-{12}\ar[d] & \pi_4\bS^4 \ar[r]^2\ar[d]^-{2} & \pi_3^s \ar[r]\ar[d]^-{1} 
	& \pi_3MT\Spin_3 =\bZ/2 \ar[r]\ar[d] & 0 \\
0 \ar[r]& \pi_4M\Spin \ar[r]^-{24}& \pi_4\bS^4 \ar@{>>}[r]& \pi_3^s \ar[r]& \pi_3M\Spin =0 \ar[r]& 0 
}
\]
which also identify $\pi_4 MT\Spin_3 = \pi_4 M\Spin = 48\bZ$, in units of the dual homology 
class~$p_1^\vee$. 

The  spectra $\Sigma^3MT\Spin_3^{rp_1}$, for $r=1,\bC$, are extensions through $\pi_5$, as 
in~\eqref{mspinp1}; but their classifying maps now have index~$2$, as seen from the top sequence  
in the diagram above. This leads to the~$\bZ/2$ summands in middle column of Table~\ref{mytable}. 

\subsection{Dimension $2$.} 
The low homotopy of the spaces $MT\Spin_2^{rp_1}$ can be determined from the fibration, 
analogous to~\eqref{mspinp1}, 
\[
\Sigma^{-TS^2}S^2 = \bS^{-2}\oplus \bS^0 \rightarrowtail  MT\Spin_2 \twoheadrightarrow  \bS^2 
\qquad\text{(below } \pi_3)
\] 
with the source split because $TS^2$ is stably trivial. A~$p_1/4$-structure kills the $4$-cell 
in $B\Spin_2$; this leaves $MT\Spin^{p_1/4}_2 \cong \bS^{-2}\oplus\bS$ 
up to $\pi_1$. 

A $p_1$-structure on a family oriented surfaces trivializes the $12$th power of the Hodge 
determinant bundle $\delta:= \det\left(H^*(\cO)\right)$. The~$12$ (now flat) powers of~$\delta$ 
detect the group $\pi_1MT\SO^{p_1}_2$, and shifts in $p_1$-structure cycle through it. 

$\Spin$ structures allow us to define fourth roots of $\delta$, from the Pfaffian of the Dirac 
operator; the four roots differ by flat line bundles, which detect the group $\pi_1 MT\Spin_2\cong  \bZ/4 $. 
This additional divisibility also enhances the $\bZ/12$ of 
$\pi_1MT\SO^{p_1}_2$ to the $\bZ/48$ 
summand of $\pi_1MT\Spin^{p_1}_2$, in addition to the $\bZ/4$ from $ \pi_1MT\Spin_2$.

\begin{remark}
A proof of the splitting~\eqref{splitspin3} on~$\pi_3$ with $r\in\{1/2, 1,\bC\}$ can be seen directly the following 
extensions, stemming from the analogues of the fibration~\eqref{mspinp1} in dimensions~$2$ and $3$:
\begin{alignat*}{3}
\pi_3^s\oplus \pi_1^s &\rightarrowtail 
	\pi_1 MT\Spin^{rp_1}_2 &\twoheadrightarrow \bZ/4, \\
\pi_3^s &\rightarrowtail 
	\pi_0 MT\Spin^{rp_1}_3 &\twoheadrightarrow \bZ/4,\\
\pi_3^s &\rightarrowtail \quad
	\pi_3 M\Spin^{rp_1} &\twoheadrightarrow \bZ/2.
\end{alignat*}
All oriented $3$-manifolds admit $p_1$-structures, so the forgetful maps are surjective. Shifts 
in $p_1$-structure act cyclically on the stable summand, so the four, respectively two orbits in 
the left groups must map bijectively onto the right summands in~\eqref{splitspin3}, establishing 
the splitting.

The extensions are classified by the elements $(2,1), 2$ and $1$ in the respective $\mathrm{Ext}$ groups. 
\end{remark}

\subsection{Maps between structural groups.}
The homotopy groups align to $3$-manifold structures and fit into a natural diagram of maps induced by relaxing the structures:
\[ 
\xymatrix{
 & \pi_1 MT\Spin^{p_1/2}_2 \ar[r] \ar[d] & \pi_1 MT\Spin^{p_1}_2\ar[r]^-{o_2} \ar[d] &
	\pi_1 MT\SO^{p_1}_2 \ar[d] \\
{\begin{matrix}\pi_3^s\cong \pi_0 MT\Spin^{p_1/4}_3 \end{matrix}}
	\ar[r]^-{s_3} \ar[ur]^-{s_2} 
	\ar[dr]^{s}_{\sim} 
	&
	\pi_0 MT\Spin^{p_1/2}_3\ar[r] \ar[d] &
	\pi_0 MT\Spin^{p_1}_3 \ar[r]^-{o_3}\ar[d] & 
	\pi_0 MT\SO^{p_1}_3 \ar[d],\\
 & \pi_3 M\Spin^{p_1/2}\ar[r] & 
	\pi_3 M\Spin^{p_1} \ar[r]^-{o} & 
	\pi_3 M\SO^{p_1}. 
}
\]
\begin{proposition}\label{spinmaps}
All vertical maps are surjective, the $s_\bullet$ are injective, while all $o_\bullet\circ s_\bullet$ are surjective. More precisely, we can choose generators so that 
\[
\xymatrix{
& \bZ/24 \oplus \bZ/4 \ar[r]^-{(2,1)}\ar[d]^- 
	{\left[
	\begin{smallmatrix} 1 & 0 \\ 0 & 1 \end{smallmatrix}\right]}& 
	\bZ/48\oplus \bZ/4 \ar[r]^-{(-1,\pm 3)} 
	\ar[d]^{\left[\begin{smallmatrix} 1 & 0 \\ 0 & 1 \end{smallmatrix}\right]}
	& \bZ/12 \ar[d]^{1} \\
\bZ/24 \ar[dr]^{=}
	\ar[r]^-{(1,1)} \ar[ur]^-{(1, \pm 1)}
	& \bZ/24 \oplus\bZ/2 \ar[r]^{\left[
	\begin{smallmatrix} 2 & 0 \\ 0 & 1 \end{smallmatrix}\right]} 
	\ar[d]^{\left[
	\begin{smallmatrix} 1 \\ 0 \end{smallmatrix}\right]} 
	& \bZ/48\oplus \bZ/2 \ar[r]^{\qquad(-1,3)}\ar[d]^
	{\left[
	\begin{smallmatrix} 1 \\ 0 \end{smallmatrix}\right]}
	& \bZ/6 \ar[d]^{1}\\
& \bZ/24 \ar[r]^{2}& \bZ/48 \ar[r]^{1} & \bZ/3
}
\]
The sign ambiguity in the top row is absorbed by the sign automorphism of $\bZ/4$.
\end{proposition}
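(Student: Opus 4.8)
The plan is to read every group in the diagram off Table~\ref{mytable} and the splittings \eqref{splitspin3} (together with their surface analogue) established in \S\ref{tangents}, and then to identify each arrow as a map of Madsen--Tillmann spectra by tracking a generator. The organising principle is that, away from the $\SO$--column, each group is the direct sum of a \emph{dimensionally stable summand} --- the image of $\bS$ for $\Spin^{p_1/4}$, or the relevant $\pi_3 M\Spin^{rp_1}$ --- and a \emph{$p_1$-independent summand} $\pi_\bullet MT\Spin(n)$, and that the three vertical (stabilisation) maps respect this decomposition: on the $p_1$-independent summand they are the classical maps $\pi_1 MT\Spin(2)=\bZ/4\twoheadrightarrow \pi_0 MT\Spin(3)=\bZ/2\twoheadrightarrow \pi_3 M\Spin=0$, and on the stable summand they are identities. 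Surjectivity of every vertical map is then immediate, because every oriented $3$-manifold carries a $p_1$-structure, i.e.\ the forgetful maps $MT\Spin^{rp_1}(n)\to M\Spin^{rp_1}$ are onto --- the same input used for \eqref{splitspin3}. This also isolates the injectivity of the $s_\bullet$ (their stable component being an isomorphism) and reduces surjectivity of each $o_\bullet\circ s_\bullet$ to a matrix check once the horizontal maps are known.

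Next I would compute the horizontal maps, which fall into two families. First, the \emph{refinement} maps $\Spin^{p_1/2}\to\Spin^{p_1}$ and the diagonals $s,s_2,s_3$ out of $\Spin^{p_1/4}$ come from the fact that a $p_1/2$-trivialisation doubles to a $p_1$-trivialisation; on the stable summands the effect is governed by $p_1,\,p_1/2\colon\Omega_4^{\Spin}=\bZ\to\bZ$, which Rohlin's theorem identifies ($\sigma(\mathrm{K3})=-16$ and $p_1=3\sigma$) with multiplication by $48$ and $24$, so $\pi_3 M\Spin^{p_1/2}=\bZ/24=\pi_3^s$, $\pi_3 M\Spin^{p_1}=\bZ/48$, and doubling is $\bZ/24\xrightarrow{\ \times 2\ }\bZ/48$; on the $p_1$-independent summands these maps are identities. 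Choosing the generator of the larger target so that the $p_1$-independent part is carried diagonally then yields the $(2,1)$ and $\times2$ entries and the identity matrices on the adjacent verticals; $s$ is the consequent iso $\pi_3^s\xrightarrow{\sim}\pi_3 M\Spin^{p_1/2}$, and $s_3=(1,1)$ because its second component is the defining quotient $\pi_3^s\twoheadrightarrow\pi_0 MT\Spin(3)=\bZ/2$ (``$MT\Spin(3)$ is a quotient of $\pi_3^s$''). Second, the \emph{forget-the-spin-structure} maps $o_\bullet\colon\Spin^{p_1}\to\SO^{p_1}$ are induced by $M\Spin^{p_1}\to M\SO^{p_1}$, i.e.\ $\bZ/48\to\bZ/3$, which is surjective (every oriented $3$-manifold is spin and $p_1\colon\Omega_4^{\SO}\to\bZ$ has image $3\bZ$ via $\bC\bP^2$) hence the Chinese-remainder reduction up to a unit; the $p_1$-independent class $\spit\in\pi_0 MT\Spin(3)=\bZ/2$ maps to the order-two element $3\in\bZ/6=\pi_0 MT\SO^{p_1}(3)$, and the Pfaffian fourth root $\delta^{1/4}$ generating $\pi_1 MT\Spin(2)=\bZ/4$ maps to $(\delta^{1/12})^{3}=\pm 3\in\bZ/12=\pi_1 MT\SO^{p_1}(2)$ (using the relation between the Dirac-determinant line and the twelfth power of the Hodge bundle recorded in \S\ref{tangents}), producing the $(-1,3)$ and $(-1,\pm3)$ entries.

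I expect the main obstacle to be the remaining off-diagonal data --- chiefly the $\bZ/4$-component of $s_2$, namely that a framing induces a \emph{generator} $(\pm1)$ rather than the order-two element $(\pm2)$ of $\pi_1 MT\Spin(2)=\bZ/4$, together with the above images of $\spit$ and $\delta^{1/4}$ --- since these do not follow from the abstract group orders but depend on the attaching maps ($\eta$'s and $J$-homomorphism images) in the cell structures of $MT\Spin(2)$, $MT\Spin(3)$ and on how the bottom sphere cell sits inside, equivalently on the extension problems in the Anderson-dual Atiyah--Hirzebruch spectral sequences of these spectra. The cleanest route, avoiding a bare-hands cell computation, is to detect each generator against the explicit invertible theories of \S\ref{alphaomega}: using $\spit$ as the order-two character of $\pi_3^s$, $\ferm$ as the standard character, the relations $\omega=\spit\otimes\ferm^{\otimes 12}$ on stably framed manifolds and $\cT_U=\spit\otimes\ferm^{-8}$ on $\Spin^{p_1}(3)$-manifolds, and Rohlin's theorem for the $p_1/2$-strata (as in the proof of Proposition~\ref{ofgroups}), one evaluates each class on a complete set of invertibles and thereby pins down every matrix entry. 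This also makes transparent that the residual sign is genuine: it is precisely the automorphism of the $\bZ/4$-summands and cannot be normalised away, in keeping with the role of the saboteur $\coker J$ described in \S\ref{sabotage}.
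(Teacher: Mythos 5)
Your overall architecture is close to the paper's: both proofs run on the splittings \eqref{splitspin3} and their surface analogues, both normalize the stable summands by units of $p_1$-structure, and both use the Hodge-determinant description of $\pi_1MT\SO^{p_1}(2)$. Your identifications of the stable entries (the $\times 2$ on refinement via $p_1(\mathrm{K3})=-48$, $p_1/2(\mathrm{K3})=-24$; the reduction $\bZ/48\to\bZ/3$ via $p_1(\bC\bP^2)=3$) are correct and are a more explicit version of what the paper reads off the twisted Atiyah--Hirzebruch spectral sequences for $\bI^*_{\bC^\times}$.

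The gap is in exactly the entries you flag as ``the main obstacle.'' The content of the proposition lives in the unstable/off-diagonal data: that $\spit$ maps to the order-two element $3\in\bZ/6$, that $s_2$ hits a \emph{generator} of the $\bZ/4$, and that the $\bZ/4$ injects into $\bZ/12$ --- equivalently, that each $o_\bullet\circ s_\bullet$ surjects onto the full right-hand group rather than onto its odd part (which is all your $\bC\bP^2$/stable-summand argument gives). Your proposed resolution --- evaluating against the invertible theories via $\omega=\spit\otimes\ferm^{\otimes 12}$ and $\cT_U=\spit\otimes\ferm^{-8}$ --- is circular: unwinding the definitions of $\cT_U$ (pullback of the standard character of $\bZ/6$ along $o_3$) and of $\spit,\ferm$ (the two projections of the splitting of $\pi_0MT\Spin^{p_1}(3)$), the relation $\cT_U=\spit\otimes\ferm^{-8}$ \emph{is} the assertion $o_3=(-1,3)$, and $\omega=\spit\otimes\ferm^{\otimes 12}$ likewise encodes $s_3=(1,1)$; indeed the paper invokes Proposition~\ref{spinmaps} in \S\ref{caution} to justify statements about these very theories, so the logical order runs the other way. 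The paper closes the loop by establishing the surjectivity of $o_\bullet\circ s_\bullet$ directly from the spectral-sequence comparison of the $\Spin$ and $\SO$ towers and then using it to force injectivity of the unstable kernels into the rightmost groups; you would need to supply an equivalent independent input (e.g., that the Lie-framed $S^3$ generates $\pi_0MT\SO^{p_1}(3)=\bZ/6$, not just its $\bZ/3$), rather than import it from \S\ref{alphaomega}.
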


\begin{proof}[Sketch of proof.]
The groups are determined and compared from the Atiyah-Hirzebruch spectral sequences for 
\emph{twisted} $I^*_{\bC^\times}$-cohomologies of the respective $B\Spin$ and $B\SO$ groups. The 
twistings are by the (negative of the) standard representation, acting on coefficients via the $J$-homomorphism.   

We can certainly choose generators of $\bZ/24, \bZ/48, \bZ/12, \bZ/6$ and $\bZ/3$, compatibly 
with the maps indicated. Surjectivity of $o_\bullet\circ s_\bullet$ shows that the maps 
out of the unstable kernels to the rightmost groups must be injective, and the ones out 
of $\pi_3^s$ to them must be surjective. 

Finally, a $p_1$-structure on surfaces is trivialization of the $12$th power of the Hodge determinant 
bundle $\delta$, and shifts cycle through the twelve (now flat) powers of $\delta$. 
The split $\bZ/48$ summand must then surject through $o_2$. Combined with the injectivity of $s_3$ 
on $\bZ/2$ and the choice of sending the standard generator $1\in\bZ/24$ to the standard ones 
in the rightmost groups, this pins the maps, up to the sign ambiguity flagged. 
\end{proof}

\subsection{The free fermion.}
\label{ffermion}
One consequence concerns the free fermion theory $\ferm$. The standard character of $\pi_3^s$ can be 
extended to $\pi_0MT\Spin^{p_1}_3$ in four different ways, as seen from Proposition~\ref{spinmaps}; 
we use the extension~\eqref{psidef} to define~$\psi$. This is the \emph{reflection-positive} choice, 
according to \cite{fh}. One observation, which can cause much grief in the form 
of apparent (although not genuine) contradictions,  is the following
\begin{proposition}
When restricted to framed manifolds, the theory $\ferm^{\otimes 4}$ factors uniquely through 
$\SO^{p_1}_3$-structures. 
Starting from manifolds with $\Spin^{p_1}_3$ structure, only powers of $\ferm^{\otimes 16}$ factor. 
Similarly, $\ferm^{\otimes 2}$ factors from framed through $\SO^{p_1}_2$-manifolds, but only 
powers of $\ferm^{\otimes 16}$ descend there from $\Spin^{p_1}_2$-structures. 
\end{proposition}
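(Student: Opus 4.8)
The plan is to use the standard dictionary for invertible theories: a $\tau$-structured invertible TQFT is classified by a character from the relevant Madsen--Tillmann homotopy group to $\bC^\times$, and it factors through a relaxation $\tau\to\tau'$ of the tangential structure exactly when its classifying character is pulled back along the induced map of bordism groups; the factorization is then unique iff that map is surjective. So the whole proposition reduces to (a) identifying the classifying character of each power of $\ferm$ on framed and on $\Spin^{p_1}$ manifolds, and (b) tracing it through the maps already tabulated in Proposition~\ref{spinmaps}.

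First I would treat dimension $3$. On $3$-framed manifolds --- identified with $\Spin^{p_1/4}(3)$-manifolds after strictification --- $\ferm$ is the standard character $\pi_3^s=\bZ/24 \to \bC^\times$, $1\mapsto \exp(2\pi i/24)$, by the definition in \S\ref{alphaomega}. Proposition~\ref{spinmaps} says the composite relaxation $\pi_0 MT\Spin^{p_1/4}(3)=\bZ/24 \twoheadrightarrow \pi_0 MT\SO^{p_1}(3)=\bZ/6$ carries the standard generator to the standard generator, so its kernel is $6\bZ/24\bZ$; hence $\ferm^{\otimes k}$ descends precisely when $\exp(2\pi i\cdot 6k/24)=1$, i.e. $4\mid k$, and uniquely by surjectivity, which is the first claim. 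For the $\Spin^{p_1}(3)$ version, the reflection-positive $\ferm$ of \S\ref{ffermion} is, by \S\ref{alphaomega}, the composite $\pi_0 MT\Spin^{p_1}(3)=\bZ/48\oplus\bZ/2 \to \bZ/48 \hookrightarrow \bC^\times$: projection to the stable summand $\pi_3 M\Spin^{p_1}$ followed by the inclusion of $48$th roots of unity. In the generators of Proposition~\ref{spinmaps}, the relaxation $o_3\colon \bZ/48\oplus\bZ/2 \to \bZ/6$ is $(x,y)\mapsto -x+3y$, whose kernel is the cyclic group of order $16$ generated by $(3,1)$; so $\ferm^{\otimes k}$ descends iff $\exp(2\pi i\cdot 3k/48)=1$, i.e. $16\mid k$, giving the minimal power $\ferm^{\otimes 16}$.

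Then I would run the identical argument in dimension $2$ off the top row of the diagram in Proposition~\ref{spinmaps}, using the maps $s_\bullet$ and the horizontal relaxation maps there. Pulled through to $\pi_1 MT\SO^{p_1}(2)=\bZ/12$, the character of $\ferm$ on $2$-framed surfaces is again the standard $\bZ/24$-character sent generator-to-generator, with kernel $12\bZ/24\bZ$, so $\ferm^{\otimes k}$ descends iff $2\mid k$; while the reflection-positive $\ferm$ on $\Spin^{p_1}(2)$-manifolds is the projection $\pi_1 MT\Spin^{p_1}(2)=\bZ/48\oplus\bZ/4 \to \bZ/48$ followed by the $48$th-root inclusion, and the relaxation $o_2$ to $\bZ/12$ is $(x,y)\mapsto -x\pm 3y$ with kernel the order-$16$ cyclic subgroup generated by $(3,1)$, forcing $16\mid k$. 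The sign ambiguities $(\pm 1,\pm 3)$ in Proposition~\ref{spinmaps} are absorbed by the automorphism of the $\bZ/4$ summands and change none of these divisibility statements.

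I expect the only real subtlety --- and the step that produces the ``apparent contradictions'' the text warns about --- to be the bookkeeping: keeping rigorously separate the $24$th-root character classifying $\ferm$ on (stably) framed manifolds from the $48$th-root character classifying its reflection-positive extension to $\Spin^{p_1}$-manifolds. It is exactly this factor of two, compounded with the surviving unstable $\bZ/2$ (the theory $\spit$) inside $\pi_0 MT\Spin^{p_1}(3)$ and the $\bZ/4$ inside $\pi_1 MT\Spin^{p_1}(2)$, that turns the framed answers $4$ and $2$ into the $\Spin^{p_1}$ answer $16$ in both dimensions; getting the map $\bZ/24 \xrightarrow{\ 2\ } \bZ/48$ and the mixing of the stable and unstable summands right is where care is needed, and everything else is routine.
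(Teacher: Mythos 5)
Your proposal is correct and is essentially the paper's own argument, which simply observes that all the right-hand horizontal arrows $o_\bullet$ in Proposition~\ref{spinmaps} have kernel $\bZ/16$ (generated by $(3,1)$ in the split descriptions), while the composites $o_\bullet\circ s_\bullet$ out of $\pi_3^s=\bZ/24$ have kernels of order $4$ and $2$ in dimensions $3$ and $2$ respectively. Your explicit computation of these kernels and of the characters classifying $\ferm^{\otimes k}$ fills in the details the paper leaves implicit, but follows the same route.
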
 
\begin{proof}
This is clear from the explicit factoring maps in Proposition~\ref {spinmaps}: all kernels of the right 
horizontal arrows are isomorphic copies of $\bZ/16$.  
\end{proof}


\bigskip
\small{
\noindent
\textsc{Daniel S.~Freed,} Harvard University, \texttt{dafr@math.harvard.edu} \\
\textsc{Claudia I.~Scheimbauer,} Technische Universit\"at M\"unchen, \texttt{scheimbauer@tum.de} \\
\textsc{Constantin Teleman,} UC Berkeley, \texttt{teleman@berkeley.edu} }

\end{document}